\newtheorem{theorem}{Theorem}[section]
\newtheorem{lemma}[theorem]{Lemma}
\newtheorem{condition}{Condition}[section]
\newtheorem{definition}{Definition}[section]
\newtheorem{assumption}[theorem]{Assumption}
\newcommand{\BI}{\mathbb{I}}
\newcommand{\Prob}{\textnormal{Prob}}
\newcommand{\x}{\mathbf x}
\newcommand{\y}{\mathbf y}
\newcommand{\s}{\mathbf s}
\newcommand{\sa}{\mathbf a}
\newcommand{\g}{\mathbf g}
\newcommand{\z}{\mathbf z}
\newcommand{\w}{\mathbf w}
\newcommand{\p}{\mathbf p}
\newcommand{\argmin}{\mathop{\rm argmin}}
\newcommand{\KCal}{\mathcal{K}}
\newcommand{\OCal}{\mathcal{O}}
\newcommand{\SCal}{\mathcal{S}}
\newcommand{\SucCal}{\mathcal{SC}}
\newcommand{\br}{\mathbb{R}}
\newcommand{\ba}{\begin{array}}
	\newcommand{\ea}{\end{array}}
\definecolor{DSgray}{cmyk}{0,1,0,0}
\begin{document}

\title{Accelerating Adaptive Cubic Regularization of Newton's Method via Random Sampling}


\author{
	Xi CHEN
	\thanks{Stern School of Business, New York University, New York, NY 10012, USA. Email: xchen3@stern.nyu.edu.} \and
	Bo JIANG
	\thanks{Research Institute for Interdisciplinary Sciences, School of Information Management and Engineering, Shanghai University of Finance and Economics, Shanghai 200433, China. Email: isyebojiang@gmail.com. } \and
	Tianyi LIN
	\thanks{Department of Electrial Engineering and Computer Science, UC Berkeley, Berkeley, CA 94720, USA. Email: darren\_lin@berkeley.edu.} \and
	Shuzhong ZHANG
	\thanks{Department of Industrial and Systems Engineering, University of Minnesota, Minneapolis, MN 55455, USA; email: zhangs@umn.edu.}}


\maketitle

\begin{abstract}
In this paper, we consider an unconstrained optimization model where the objective is a sum of a large number of possibly nonconvex functions, though overall the objective is assumed to be smooth and convex. Our bid to solving such model uses the framework of cubic regularization of Newton's method. As well known, the crux in cubic regularization is its utilization of the Hessian information, which may be computationally expensive for large-scale problems. To tackle this, we resort to approximating the Hessian matrix via sub-sampling. In particular, we propose to compute an approximated Hessian matrix by either \textit{uniformly}\/ or \textit{non-uniformly}\/ sub-sampling the components of the objective. Based upon such sampling strategy, we develop accelerated adaptive cubic regularization approaches and provide theoretical guarantees on global iteration complexity of $\OCal(\epsilon^{-1/3})$ with high probability, which matches that of the original accelerated cubic regularization methods \cite{Jiang-2020-Unified} using the \textit{full}\/ Hessian information. Interestingly, we also show that in the worst case scenario our algorithm still achieves an $\OCal(\epsilon^{-5/6}\log(\epsilon^{-1}))$ iteration complexity bound. The proof techniques are new to our knowledge and can be of independent interets. Experimental results on the regularized logistic regression problems demonstrate a clear effect of acceleration on several real data sets.
\end{abstract}

\vspace{0.1cm}

\noindent {\bf Keywords:} Sum of nonconvex functions; acceleration; parameter-free adaptive algorithm; cubic regularization; Newton's method; random sampling; iteration complexity.

\vspace{0.1cm}

\noindent {\bf Mathematics Subject Classification:} 90C06, 90C60, 90C53.

\section{Introduction}

In this paper, we consider the following {\it finite-sum}
convex optimization problem:
\begin{equation}\label{Prob:main}
f^* := \min_{\x\in\br^d} \ f(\x) = \min_{\x\in\br^d}  \frac{1}{n} \sum_{i=1}^n f_i(\x),
\end{equation}
where
$f: \br^d\rightarrow\br$ is \textit{smooth} and \textit{convex}, while each component function $f_i: \br^d\rightarrow\br$ is \textit{smooth} but possibly \textit{nonconvex}. In addition, we assume $f^*>-\infty$. A variety of machine learning and statistics applications can be cast into problem~\eqref{Prob:main} where $f_i$ is interpreted as the loss of the $i$-th observation, e.g., \citet{Friedman-2001-Elements, Sra-2012-Optimization, Kulis-2013-Metric, Bottou-2018-Optimization, Goodfellow-2016-Deep}. An important special case of problem~\eqref{Prob:main} is
\begin{equation}\label{Prob:special}
\min_{\x\in\br^d} \ f(\x) := \min_{\x\in\br^d} \ \left[\frac{1}{n} \sum_{i=1}^n f_i(\sa_i^\top\x)\right],
\end{equation}
where $ f_i: \br\rightarrow\br$ and $\sa_i$ is the $i$-th observation. The formulation in Eq.~\eqref{Prob:special} finds a wide range of applications. A typical example is the (regularized) maximum likelihood estimation for generalized linear models, which includes regularized least squares and regularized logistic regression. We refer the interested readers to Section~\ref{sec:examples} for more applications in form of Eq.~\eqref{Prob:main} and Eq.~\eqref{Prob:special}.

Up till now, much of the efforts devoted to solving problem~\eqref{Prob:main}  has been on developing stochastic first-order approach~\citep{Shalev-2016-Sdca, Allen-2016-Improved}, due primarily to its simplicity nature in both theoretical analysis and practical implementation. However, stochastic gradient type algorithms are known to be sensitive to the conditioning of the problem and the parameters to be tuned in the algorithm~\citep{Xu-2016-SubNewton}. On the contrary, second-order optimization methods~\citep{Luenberger-1984-Linear} have been shown to be generally robust~\citep{Roosta-2019-SubNewton, Xu-2016-SubNewton} and less sensitive to the parameter choices~\citep{Berahas-2020-Investigation, Xu-2017-Empirical}. A downside, however, is that the second-order type algorithms are more likely to prone to higher computational costs for large-scale problems, by nature of requiring the second-order information (viz. Hessian matrix). To alleviate this, one effective approach is the so-called sub-sampled second-order methods that approximate Hessian matrix via random sampling scheme~\citep{Drineas-2018-Lectures}.

Recent trends in the optimization community tend to improve an existing method along two possible directions. The first direction of improvement is \emph{acceleration}.~\citet{Nesterov-1983-Method, Nesterov-2004-Introductory} pioneered the study of accelerated gradient-based algorithms for convex optimization. For stochastic convex optimization, \citet{Lan-2012-Optimal} developed an accelerated stochastic gradient-based algorithm. Since then, various accelerated stochastic first-order methods have been proposed (see, e.g., \citet{Shalev-2013-Accelerated, Frostig-2015-Regularizing, Ghadimi-2016-Accelerated, Allen-2017-Katyusha, Jain-2018-Accelerating, Allen-2018-Natasha}). Despite its popularity and simplicity, {the stochastic first-order approach may perform poorly for ill-conditioned instances~\citep{Roosta-2019-SubNewton} and can be sensitive to certain algorithmic parameters such as the choices of stepsizes~\citep{Berahas-2020-Investigation}. 
In contrast}, there are limited results~\citep{Song-2019-Inexact, Ghadimi-2017-Second, Ye-2020-Nesterov} on accelerated stochastic second-order approaches. The second direction of improvement is to investigate \emph{adaptive} optimization algorithms without ensuring the problem parameters such as the first and the second order Lipschitz constants. In view of implementation, it is desirable to design algorithms that adaptively adjust these parameters since they are usually unknown {\it a priori}. A typical example is adaptive gradient method (e.g.\ AdaGrad~\citep{Duchi-11-Adaptive}), which is popular in the machine learning community.

However, such improvements -- though highly desirable due to their relevance in machine learning -- are largely lacking in the context of stochastic or sub-sampling second-order algorithms. When the objective function $f$ is non-convex, sub-sampling adaptive cubic regularized Newton's methods~\citep{Kohler-2017-SubSample, Xu-2019-Newton} are capable of reaching a second-order critical point within an iteration bound of $\OCal(\epsilon^{-3/2})$. However, we are unaware of any existing accelerated sub-sampling second-order methods that are fully independent of problem parameters while maintaining superior convergence rate.
Recall that~\citet{Nesterov-2008-Accelerating} proposed an accelerated cubic regularized Newton's method with provable overall iteration complexity of $\OCal(\epsilon^{-1/3})$ for convex optimization.

Therefore, a natural question raises:
\begin{quote}
\textit{Can one develop an adaptive and accelerated sub-sampling cubic regularized method with an iteration complexity of $\OCal(\epsilon^{-1/3})$?}
\end{quote}
In this paper, we provide an affirmative answer to the above question. {In particular, by modifying the algorithm in our previous work~\citet{Jiang-2020-Unified}, we manage to develop a novel sub-sampled cubic regularization method that is adaptive and accelerated. The advantages of the proposed approach inherited from that in~\citet{Jiang-2020-Unified} include: the algorithms are  fully adaptive without requiring any problem parameters, and the cubic regularized sub-problem in the algorithms is allowed to be solved inexactly (see Condition \ref{Cond:Approx_Subprob_SAARC}) with some easy-to-satisfy approximation conditions similar to that in \citet{Birgin-2017-Worst} and~\citet{Jiang-2020-Unified}. In contrast with the algorithms in \citet{Jiang-2020-Unified}, we use the sub-sampled Hessian rather than the full Hessian in the cubic sub-problem to reduce the per-iteration computational cost, and the sub-sampled size gradually increases from a very small initial set, leading to a significant computational savings at the beginning steps of the algorithms.	Moreover, we show that our proposed algorithm has the global convergence rate of $\OCal(\epsilon^{-1/3})$ with high probability (Theorem \ref{Theorem:SAARC-Iteration-Complexity}), which matches its deterministic counterparts \citep{Jiang-2020-Unified}, requiring the availability of the \textit{full}\/ Hessian information. Although the issue of inexact Hessian has also been discussed in \citet{Jiang-2020-Unified}, the proposed Hessian approximation is based on the finite differences of the gradient, which is { more expensive
when $n$ and $d$ are large as shown in the numerical result section}. In terms of the worst-case (i.e., when the error of the sub-sampled Hessian can not be controlled)  performance of our algorithm, we show that it has a guarantee of $\OCal(\epsilon^{-5/6}\log(\epsilon^{-1}))$ iteration bound (Theorem \ref{Thm:W-AARC-Main}).
 }  
{It is worth mentioning that the sub-sampled strategy is only adopted in approximating the Hessian matrix, while the true gradient is counted exactly. The merit of our method is particularly clear when both $n$ and $d$ are large (see Figure \ref{fig:result-low-time}). Another advantage of counting the full gradient is that our algorithm has a worst-case performance guarantee (i.e., Theorem \ref{Thm:W-AARC-Main}) in addition to the standard high probability result.}

\subsection{Examples}\label{sec:examples}
In this subsection, we provide a few examples in the form of Eq.~\eqref{Prob:main} and Eq.~\eqref{Prob:special} arising from applications of machine learning. Examples for convex component functions are well known, e.g.\ the regularized least squares problem
\begin{equation*}
\min_{\x\in \br^d} \ f(\x) = \frac{1}{n}\sum\limits_{i=1}^n \left[\left( {\sa}_i^\top \x - {R}({\sa_i})\right)^2 + \lambda\| \x\|^2\right],
\end{equation*}
and the regularized logistic regression,
\begin{equation*}
\min_{\x\in \br^d} \ f(\x) = \frac{1}{n}\sum\limits_{i=1}^n \left[\ln \left( 1+\exp\left(-{R}({\sa_i}) \cdot {\sa}_i^\top \x\right) \right) + \lambda\| \x\|^2\right],
\end{equation*}
where $\sa_i \in\br^d$ and ${R}({\sa_i})$ denote the feature 
and response of the $i$-th data point respectively. To be more specific, we have ${R}({\sa_i}) \in \br$ for the least squares loss, and ${R}({\sa_i}) \in \{-1,+1\}$ for logistic regression. The parameter $\lambda>0$ is known as the regularization parameter.

Below we shall provide some examples where certain components in the finite sum may be nonconvex. Consider for instance the \textit{nonconvex support vector machine}  \citep{Mason-2000-Boosting, Wang-2017-Stochastic}, where the objective function takes the form of
\begin{equation*}
\min_{\x\in\br^d} \ f(\x) := \frac{1}{n}\sum_{i=1}^n \left[1-\tanh\left({R}({\sa_i})\cdot\sa_i^\top\x\right) + \lambda\left\|\x\right\|^2\right] ,
\end{equation*}
which is an instance of
\eqref{Prob:main} with
\begin{equation*}
f_i(\x) = 1-\tanh\left({R}({\sa_i})\cdot\sa_i^\top\x\right) + \lambda\left\|\x\right\|^2.
\end{equation*}
Indeed, for some choice of $\lambda>0$, the objective is convex but a few component functions may be nonconvex.

Another  example comes from \textit{principal component analysis (PCA)}. Consider a set of $n$ data vectors $\sa_1, \ldots, \sa_n$ in $\br^d$ and the normalized co-variance matrix $A=\frac{1}{n}\sum_{j=1}^n \sa_j\sa_j^\top$, PCA  aims to find the leading principal component.~\citet{Garber-2015-Fast} proposed a new efficient optimization for PCA by reducing the problem to solving a small number of convex optimization problems, where a critical subroutine in the method is to solve
\begin{equation*}
\min_{\x\in\br^d} \ \frac{1}{2}\x^\top\left(\mu \BI - A\right)\x + b^\top\x = \min_{\x\in\br^d} \frac{1}{n}  \sum_{j=1}^n \left[\frac{1}{2}\x^\top\left(\mu \BI - \sa_j \sa_j^{\top} \right)\x + b^\top\x \right],
\end{equation*}
where $\mu$ is larger than or equal to the maximum eigenvalue of $A$.  
Although the above formulation is convex optimization, component functions in the above optimization problem may be nonconvex.

\subsection{Related Works}
The literature on the acceleration of second-order or higher-order methods for convex optimization is somewhat limited as compared to its first-order counterpart.~\citet{Nesterov-2008-Accelerating} improved the overall iteration complexity for convex optimization from $O(\epsilon^{-1/2})$ to $O(\epsilon^{-1/3})$ by means of the so-called cubic regularized Newton's method, and further accelerated it to $O(\epsilon^{-1/(p+1)})$ \citep{Nesterov-2019-Implementable} by utilizing up to $p$-th order derivative information.~\citet{Monteiro-2012-Iteration} and~\citet{Monteiro-2013-Accelerated} proposed the Newton proximal extragradient method (A-HPE) and its acceleration, which achieved an improved iteration complexity of $O(\epsilon^{-2/7})$. Recently,~\citet{Arjevani-2019-Oracle} showed that $O(\epsilon^{-2/7})$ is actually a lower bound for the second-order methods to solve convex optimization, and thus A-HPE method is an optimal second-order method. Motivated by Monteiro and Svaiter's work, three groups of researchers independently proposed and analyzed some optimal high-order methods achieving the iteration complexity of $O(\epsilon^{-2/(3p+1)})$~\citep{Gasnikov-2019-Optimal, Bubeck-2019-Near, Jiang-2020-Optimal}. However, a bisection search procedure is necessary in each iteration of all these methods~\citep{Monteiro-2013-Accelerated, Gasnikov-2019-Optimal, Bubeck-2019-Near, Jiang-2020-Optimal}, and the total number of subproblems solved at each bisection step is bounded by a logarithmic factor in the given precision. On the other hand, the missing factor in the complexity estimate for the accelerated cubic regularized Newton's method is in the order of $O(\epsilon^{-1/21})$. As demonstrated by~\citet{Nesterov-2019-Implementable}, the additional logarithmic factors in the complexity bound of A-HPE method will definitely overshadow its tiny superiority in the convergence rate. From the practical efficiency point of view, the acceleration second-order scheme presented in \citet{Nesterov-2008-Accelerating} and~\citet{Monteiro-2013-Accelerated} are not easily implementable, since they assume the knowledge of some Lipschitz constant of the Hessian. To alleviate this,~\citet{Jiang-2020-Unified} incorporated an adaptive strategy \citep{Cartis-2011-Adaptive-I, Cartis-2011-Adaptive-II} into Nesterov's approach \citep{Nesterov-2008-Accelerating, Nesterov-2019-Implementable}, and further relaxed the criterion for solving each sub-problem while maintaining the same iteration complexity for convex optimization. However, the deterministic second-order method, e.g., the one proposed by~\citet{Jiang-2020-Unified}, may be computationally costly as it requires the full second-order information.

The seminal work of~\citet{Robbins-1951-Stochastic} triggered a burst of research interest on developing stochastic first-order methods. Regarding the second-order methods (in particular Newton's method), there has been a recent intensive research attention in designing their stochastic variants suitable for large-scale applications, e.g.\ stochastic quasi-Newton methods~\citep{Byrd-2016-Stochastic, Schraudolph-2007-Stochastic}, stochastic cubic regularization method~\citep{Tripuraneni-2018-Stochastic}, randomized cubic regularization method~\citep{Doikov-2018-Randomized}, stochastic trust region method~\citep{Blanchet-2019-Convergence}, stochastic line search method~\citep{Paquette-2020-Stochastic}, Hessian sketching~\citep{Pilanci-2017-Newton, Cormode-2019-Iterative} and sub-sampling methods~\citep{Agarwal-2017-Second, Byrd-2011-Use, Bollapragada-2019-Exact, Erdogdu-2015-Convergence, Kylasa-2018-GPU, Liu-2017-Inexact, Yao-2018-Inexact, Li-2019-SubNewton, Roosta-2019-SubNewton, Xu-2016-SubNewton}. {Note that all the works for finding the global minimizers on \textit{sub-sampling methods} assume that all the component functions are convex. }In terms of cubic regularized Newton's method for non-convex optimization, {the adaptive regularization algorithms with inexact evaluation for both  function and derivatives are considered in \cite{Bellavia-2019-Adaptive}.}~\citet{Kohler-2017-SubSample} proposed a uniform sub-sampling strategy to approximate the Hessian matrix and the gradient, however, in each step of the algorithm the sample size for the approximation is unknown until the cubic subproblem in this iteration is solved.~\citet{Xu-2019-Newton} resolved this issue by conducting appropriate uniform and non-uniform sub-sampling strategies to construct Hessian approximations within the cubic regularization scheme and~\citet{Yao-2018-Inexact} further proposed inexact variants of trust region and adaptive cubic regularization methods, which can be implemented in practice without any knowledge of unknowable problem-related quantities. {The adaptive cubic regularization methods with dynamic
	inexact Hessian information for finite-sum minimization and stochastic optimization
	are studied in \cite{Bellavia-2021-Finite-Sum} and \cite{Bellavia-2020-Complexity} respectively.}~\citet{Zhang-2018-SubNewton} managed to incoporate sub-sampling strategies into the variance reduction techniques. Under the framework of more general probabilistic models, some probabilistic convergence results for cubic regularization methods were established in~\citet{Cartis-2018-Probabilistic}. For convex optimization,~\citet{Ghadimi-2017-Second} proposed an accelerated Newton's method with cubic regularization using inexact second-order information and such information could be obtained from a subsample strategy. However, their algorithm fails to retain the
the iteration bound of $O(\epsilon^{-1/3})$, although the acceleration is indeed observed in the numerical experiments. Another recent work by \citet{Ye-2020-Nesterov} resorted to {Nesterov's} acceleration to improve the convergence performance of second-order methods (approximate Newton), including regularized sub-sampled Newton, and provided nice empirical evaluation results. However, the acceleration is only achieved when the objective function is strongly convex. After the first version of this paper was published online,~\citet{Song-2019-Inexact} in the meanwhile studied an accelerated inexact proximal cubic regularized Newton's method that allows a composite objective: the sum of a smooth and a nonsmooth convex function. Their algorithm still assumes the knowledge of the Lipschitz constant, and has the iteration bound of $O(\epsilon^{-1/3})$ in the sense of expectation. It is worth noting that both~\citet{Ghadimi-2017-Second} and~\citet{Song-2019-Inexact} assume the approximated Hessian is pre-given and satisfy certain nice properties that need be used in the analysis. In that regard, our algorithm allows a dynamic adjustment of the sample size of the approximated Hessian, which leads to a low per-iteration computational cost at certain stage of the algorithm. The resulting computational benefits are evidently observed (and some of which will be reported in this work) in the process of our numerical experiments.

\subsection{Notations and Organization}
Throughout the paper, we denote vectors by bold lower case letters, e.g., $\x$, and matrices by regular upper case letters, e.g., $X$. The transpose of a real vector $\x$ is denoted as $\x^\top$. For a vector $\x$, and a matrix $X$, $\left\|\x\right\|$ and $\left\|X\right\|$ denote the $\ell_2$ norm and the matrix spectral norm, respectively. $\nabla f(\x)$ and $\nabla^2 f(\x)$ are respectively the gradient and the Hessian of $f$ at $\x$, and $\BI$ denotes the identity matrix. For two symmetric matrices $A$ and $B$, $A \succeq B$ indicates that $A-B$ is symmetric positive semi-definite. The subscript, e.g., $\x_i$, denotes iteration counter. $\log(\alpha)$ denotes the natural logarithm of a positive number $\alpha$. $\frac{0}{0}=0$ is imposed for non-uniform setting. The inexact Hessian is denoted by $H(\x)$, but for notational simplicity, we also use $H_i$ to denote the inexact Hessian evaluated at the iterate $\x_i$ in iteration $i$, i.e., $H_i\triangleq H(\x_i)$. The calligraphic letter $\SCal$ denotes a collection of indices from $\left\{1,2,\ldots, n\right\}$, with potential repeated items and its cardinality is denoted by $\left|\SCal\right|$.

The rest of the paper is organized as follows. In Section \ref{Sec:Pre}, we introduce the assumptions underlying this paper, and the tradeoff between the sample size and the accuracy of the resulting approximated Hessian. Then the sub-sampling accelerated cubic regularized Newton's method is presented in Section \ref{Section:SAARC}. The probabilistic and worst case iteration complexity of the algorithm are analyzed in Section \ref{Section:Probabilistic} and Section \ref{Section:worst-case} respectively. In Section \ref{Section5:Experiment}, we present some preliminary numerical results on solving regularized logistic regression, where the effect of acceleration together with low per-iteration computational cost are clearly observed. The details of  most proofs can be found in the appendix.

\section{Preliminaries} \label{Sec:Pre}
In this section, we first introduce the main definitions and assumptions used in the paper, and then present two lemmas on the construction of the inexact Hessian in random sampling.
\subsection{Assumptions} \label{Section2:Assumption}
Throughout this paper, we refer to the following definition of $\epsilon$-optimality.
\begin{definition}($\epsilon$-optimality) 
Given $\epsilon\in\left(0,1\right)$, $\x\in\br^d$ is said to be an $\epsilon$-optimal solution to problem~\eqref{Prob:main}, if
\begin{equation}\label{result:optimality}
f(\x) - f^\star \leq \epsilon,	\quad \text{or} \quad \| \nabla f(\x) \|^2 \leq \epsilon.
\end{equation}
\end{definition}

To proceed, 
we make the following standard assumption regarding the gradient and Hessian of the objective function $f$.
\begin{assumption}\label{Assumption-Objective-Gradient-Hessian}
The objective function $f(\x)$ in problem~\eqref{Prob:main} is convex and twice differentiable. Each of $f_j(\x)$ is possibly nonconvex but twice differentiable with the gradient and the Hessian being both Lipschitz continuous, i.e., there are $0<L_j, \rho_j<\infty$ such that for any $\x, \y\in\br^d$ we have
\begin{align}
\|\nabla f_j(\x) - \nabla f_j(\y)\| \leq &  \ L_j \|\x-\y\|, \label{Def:Lipschitz-Gradient} \\
\|\nabla^2 f_j(\x) - \nabla^2 f_j(\y)\| \leq &  \ \rho_j \|\x-\y\|. \label{Def:Lipschitz-Hessian}
\end{align}
	
\end{assumption}
In the rest of the paper, we define $L=\max_j L_j>0$ and $\bar{L} = \frac{1}{n}\sum_{j=1}^n L_j>0$, and $\bar{\rho} = \frac{1}{n}\sum_{j=1}^n \rho_j$.
A consequence of \eqref{Def:Lipschitz-Gradient} is that
\begin{equation}\label{Bounded-Hessian}
\|\nabla^2 f_j(\x)\| \leq L_j \; \mbox{ and } \; \|\nabla^2 f(\x)\| \leq \bar L \; \quad \forall\; \x \in \br^d.
\end{equation}
We consider the following approximation of $f$ evaluated at $\x_i$ with cubic regularization \citep{Cartis-2011-Adaptive-I, Cartis-2011-Adaptive-II} in our algorithm:
\begin{equation}\label{prob:SARC}
m(\s;\x_i,\sigma_i) = f(\x_i) + \s^\top\nabla f(\x_i) + \frac{1}{2}\s^\top H(\x_i)\s + \frac{1}{3}\sigma_i\left\|\s\right\|^3,
\end{equation}
where $\sigma_i>0$ is a regularized parameter adjusted in the process as the algorithm progresses.
{ Let {$\x_0$} be the starting point of our algorithm and {$\x^\star$} be an optimal solution of problem~\eqref{Prob:main}. Then sub-level set $\mathcal L(\x_0, \sigma_0) := \{ \x_0 + \s \in \br^d \mid m(\s; \x_0,\sigma_0 ) \le m(0; \x_0,\sigma_0 ) = f(\x_0)  \}$ at $\x_0$ with regularization parameter $\sigma = \sigma_0$ is bounded as the function $m(\s; \x_0,\sigma_0 )$ is coercive. Hence, there is some $D \geq 1$ such that 
\begin{equation}\label{bounded-level-set}
\max_{\x \in \mathcal L(\x_0, \sigma_0)} \|\x - \x^\star\| \le D.
\end{equation}}

\subsection{Random Sampling} \label{Section2:Sampling}
When each $f_i$ in~\eqref{Prob:main} is convex, random sampling has been proven to be {a} very effective approach in reducing the computational cost; see~\citet{Erdogdu-2015-Convergence, Roosta-2019-SubNewton, Bollapragada-2019-Exact, Xu-2016-SubNewton}. In this subsection, we show that such random sampling can indeed be employed for the setting considered in this paper.

Suppose that the probability distribution of the sampling over the index set $\{1,2,\ldots,n\}$ is $\p=\{p_i\}_{i=1}^{i=n}$ with $\Prob(\xi=i)=p_i\geq 0$ for $i=1,2,\ldots,n$. Let $\SCal$ and $|\SCal|$ denote the sample collection and its cardinality respectively, and define
\begin{equation}\label{Condition: SSHessian}
{\tilde{H}(\x)} = \frac{1}{n|\SCal|}\sum_{j\in\SCal} \frac{1}{p_j}\nabla^2 f_j(\x),
\end{equation}
to be the sub-sampled Hessian. When $n$ is very large, such random sampling can significantly reduce the per-iteration computational cost as $|\SCal| \ll n$. There are two sampling strategies in the literature: uniform sampling and non-uniform sampling  \cite{Xu-2019-Newton}. In the following, we review some technical result of each approach demonstrating how many samples are required to get an approximated Hessian within a given accuracy. The first one is to sample $\{1,2,\ldots,n\}$ uniformly, i.e., $p_i=1/n$. The lemma below is a simple restatement of~\citet[Lemma~16]{Xu-2019-Newton}.
\begin{lemma}\label{Lemma:Uniform-Sample}
Suppose Assumption~\ref{Assumption-Objective-Gradient-Hessian} holds for problem~\eqref{Prob:main}. A uniform sampling with or without replacement is performed to form the sub-sampled Hessian. That is for $\x\in\br^d$, the matrix ${\tilde{H}(\x)}$ is constructed from \eqref{Condition: SSHessian} with $p_j=\frac{1}{n}$ and sample size
\begin{equation*}
|\SCal| \geq \Theta^U({\hat \epsilon},\delta) := \frac{16L^2}{{\hat \epsilon}^2} \cdot \log\left(\frac{2d}{\delta}\right)
\end{equation*}
for given $0<{\hat \epsilon}, \delta<1$, where $L$ is defined as in Assumption~\ref{Assumption-Objective-Gradient-Hessian}. Then we have
\begin{equation*}
\Prob(\|{\tilde{H}(\x)}- \nabla^2 f(\x)\| \geq {\hat \epsilon}) < \delta.
\end{equation*}
\end{lemma}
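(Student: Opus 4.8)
The plan is to exploit the fact that, under uniform sampling with replacement and $p_j=1/n$, the sub-sampled Hessian in \eqref{Condition: SSHessian} reduces to the plain empirical average
$$H(\x)=\frac{1}{|\SCal|}\sum_{j\in\SCal}\nabla^2 f_j(\x),$$
i.e.\ a sum of $|\SCal|$ i.i.d.\ symmetric matrices, each with mean $\frac1n\sum_{j=1}^n\nabla^2 f_j(\x)=\nabla^2 f(\x)$. Thus $H(\x)-\nabla^2 f(\x)$ is a sum of independent, zero-mean, symmetric random matrices, and I would bound its spectral norm via a matrix Bernstein (operator-Bernstein) inequality for sums of bounded independent random matrices.

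Concretely, first I would write $H(\x)-\nabla^2 f(\x)=\sum_{k=1}^{|\SCal|}X_k$ with $X_k=\frac{1}{|\SCal|}\bigl(\nabla^2 f_{j_k}(\x)-\nabla^2 f(\x)\bigr)$, where $j_1,\dots,j_{|\SCal|}$ are the i.i.d.\ uniform sample indices; each $X_k$ is symmetric with $\BE[X_k]=0$. Using the consequence \eqref{Bounded-Hessian} of Lipschitz continuity, $\|\nabla^2 f_j(\x)\|\le L_j\le L$ and hence $\|\nabla^2 f(\x)\|\le\bar L\le L$, so the almost-sure bound $\|X_k\|\le 2L/|\SCal|=:R$ holds.

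Second, I would estimate the matrix variance. Since $\BE\bigl[(\nabla^2 f_{j_k}(\x)-\nabla^2 f(\x))^2\bigr]=\BE[(\nabla^2 f_{j_k}(\x))^2]-(\nabla^2 f(\x))^2\preceq L^2\BI$, one gets $\bigl\|\sum_{k}\BE[X_k^2]\bigr\|\le L^2/|\SCal|=:\sigma^2$. Plugging $R$ and $\sigma^2$ into matrix Bernstein yields
$$\Prob\Bigl(\|H(\x)-\nabla^2 f(\x)\|\ge\epsilon\Bigr)\le 2d\exp\!\left(\frac{-\epsilon^2/2}{\sigma^2+R\epsilon/3}\right)=2d\exp\!\left(\frac{-|\SCal|\epsilon^2}{2L^2+\tfrac{4}{3}L\epsilon}\right).$$
Requiring the right-hand side to be below $\delta$ gives the sufficient condition $|\SCal|\ge\bigl(2L^2/\epsilon^2+\tfrac{4}{3}L/\epsilon\bigr)\log(2d/\delta)$, and a short elementary case split according to whether $L/\epsilon\ge\tfrac14$ or $L/\epsilon<\tfrac14$ shows that $\max\{16L^2/\epsilon^2,\,4L/\epsilon\}$ dominates $2L^2/\epsilon^2+\tfrac43 L/\epsilon$, which delivers exactly the stated sample-size bound.

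I do not expect a genuine obstacle here; the only delicate points are (i) using the centered second-moment bound $\BE[(\nabla^2 f_{j_k}(\x)-\nabla^2 f(\x))^2]\preceq L^2\BI$ rather than the cruder $(2L)^2\BI$, so that the variance proxy is $L^2/|\SCal|$ and not $4L^2/|\SCal|$, and (ii) the elementary but slightly fiddly bookkeeping needed to collapse the two Bernstein contributions into the single $\max\{16L^2/\epsilon^2,\,4L/\epsilon\}$ expression with the clean constants claimed.
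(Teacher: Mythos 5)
Your proposal is correct and follows essentially the same route as the paper: write $H(\x)-\nabla^2 f(\x)$ as an average of i.i.d.\ centered matrices bounded in norm by $2L$ and apply a matrix (operator) Bernstein inequality. The only differences are cosmetic: the paper invokes the two-regime Gross--Nesme operator-Bernstein bound with the cruder variance proxy $4L^2$, which yields the two terms of $\max\left\{\frac{16L^2}{\epsilon^2},\frac{4L}{\epsilon}\right\}$ directly, whereas you use the single-expression Bernstein bound with the sharper centered variance $L^2/\left|\SCal\right|$ and then need the (correct) elementary comparison showing $\max\left\{\frac{16L^2}{\epsilon^2},\frac{4L}{\epsilon}\right\}\geq \frac{2L^2}{\epsilon^2}+\frac{4L}{3\epsilon}$.
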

In case problem \eqref{Prob:main} is endowed with more structures, then some more ``informative" distribution may be constructed as opposed to simple uniform sampling.
For instance, if it is in the form of \eqref{Prob:special}, then we can introduce a bias in the probability distribution and pick those \textit{relevant} $f_i$'s carefully. As suggested in \cite{Xu-2019-Newton}, we construct
\begin{equation}\label{nonuniform-probability}
p_j = \frac{| {f_j^{\prime \prime}}(\sa_j^\top \x)|\|\sa_j\|^2}{\sum_{k=1}^n |{f_j^{\prime \prime}}(\sa_k^\top \x)|\|\sa_k\|^2}, 
\end{equation}
where the absolute values are taken since ${  f_j}$ is possibly nonconvex. Next we restate~\citet[Lemma 17]{Xu-2019-Newton} below about the sampling complexity for the construction of approximated Hessian of problem~\eqref{Prob:special}.
\begin{lemma}\label{Lemma:NonUniform-Sample}
Suppose Assumption~\ref{Assumption-Objective-Gradient-Hessian} holds for problem~\eqref{Prob:special}. A non-uniform sampling is performed to form the sub-sampled Hessian. That is for $\x\in\br^d$, the matrix ${\tilde{H}(\x)}$ is constructed from \eqref{Condition: SSHessian} with $\p$ as defined in~\eqref{nonuniform-probability} and sample size
\begin{equation*}
|\SCal| \geq \Theta^N({\hat \epsilon},\delta) := \frac{4\bar{L}^2}{{\hat \epsilon}^2} \cdot \log\left(\frac{2d}{\delta}\right),
\end{equation*}
for given $0<{\hat \epsilon}, \delta<1$, where $\bar{L}$ is defined in Assumption~\ref{Assumption-Objective-Gradient-Hessian}. Then, we have
\begin{equation*}
\Prob(\|{\tilde{H}(\x)}- \nabla^2 f(\x)\| \geq {\hat \epsilon}) < \delta.
\end{equation*}
\end{lemma}
Compared to Lemma~\ref{Lemma:Uniform-Sample}, {computing the sampling probability in Lemma~\ref{Lemma:NonUniform-Sample} requires going through all data points, whose computational effort amounts to evaluating the full gradient once. However, the sampling complexity mainly comes from the sample size rather than the sampling probability. This is because the computational cost of forming the approximated Hessian matrix heavily depends on the sample size and such matrix is frequently sampled in our algorithm (i.e., sampled in every step of our algorithm). Moreover, the sample size provided by Lemma~\ref{Lemma:NonUniform-Sample} could be smaller as $\bar{L}\leq L$. In this case, the non-uniform sampling is preferable where the distributions of $L_j$ are skewed, i.e., some $L_j$ are much larger than the others and $\bar{L} \ll L$.  This advantage has been demonstrated by the practical performance of randomized coordinate descent method and sub-sampled Newton method~\citep{Qu-2016-Coordinate-I, Qu-2016-Coordinate-II, Xu-2016-SubNewton}. Therefore, in this case, the computational savings stems from the smaller sample size dominates the cost of computing the sampling probability for the non-uniform sampling scheme.}

Note that in the above two lemmas, the sample size is only proportional to the log of the failure probability, and thus we can use a very small failure per-iteration probability to guarantee the solution quality without increasing the sample size significantly. {Although, the sample sizes in Lemma \ref{Lemma:Uniform-Sample} and \ref{Lemma:NonUniform-Sample} is dependent on the Lipschitz constant, its exact value is not necessarily required and any of its upper bound would work. In addition, we provide worst-case analysis in Section \ref{Section:worst-case}, which guarantees the convergence of our algorithm regardless of the estimation quality of the Lipschitz constant.}

\section{Accelerated Adaptive Cubic Regularization of Newton's Method with Uniform and Nonuniform Sub-Sampling}
 \label{Section:SAARC}
\subsection{The Algorithm}
Now we propose the accelerated sub-sampling adaptive cubic regularization method as presented in Algorithm \ref{Algorithm:SAARC}. In particular, we adopt a two-phase scheme, where the acceleration is implemented in Phase II. It is worth noting that a direct extension of the accelerated cubic regularization method under inexact Hessian information fails to maintain the theoretical convergence property \citep{Ghadimi-2017-Second}. Therefore, the two-phase scheme is necessary to establish the accelerated rate of convergence, where the first phase serves the purpose of finding a good starting point for acceleration. Phase I and Phase II are referred to as simple sub-sampling adaptive subroutine (SSAS) and accelerated sub-sampling adaptive subroutine (ASAS), respectively, and the details are described in Algorithm \ref{Alg:SSAS} and Algorithm \ref{Alg:ASAS}. 
{In particular, note that there are two counters of iterations in Algorithm \ref{Alg:ASAS}. One is $j$ that counts the generic iterations, and the other one is $l$ for the successful iterations. In each generic iteration $j$ of Algorithm \ref{Alg:ASAS}, 
an approximate minimizer of the cubic model is computed. If the generic iteration is successful and early stopping is not activated, the auxiliar model is adaptively minimized in an inner loop for acceleration, and then the current approximation of the cubic model and the counter $l$ for the successful iterations are updated. Otherwise, the current approximation is left unchanged and the coefficient $\sigma$ of the cubic regularization term is reduced.
}In the following, we elaborate on some key steps of these algorithms.

\begin{algorithm}[!t]
\begin{algorithmic}
\STATE \textbf{Input:} $\x_0\in\br^d$, $\sigma_0 \geq \sigma_{\min} > 0$, $\tau_0 > 0$, $\gamma_2 > \gamma_1 > 1$, $\gamma_3 > 1$, $\eta > 0$, $\delta_0\in(0,1)$, $\kappa_\theta\in(0,1)$, initial tolerance of Hessian approximation $\epsilon_0 = \min\{1, \frac{\left\|\nabla f(\x_0)\right\|}{3}\}$, and {tolerance of the approximate solution $\epsilon$.}
\STATE \textbf{Phase I (SSAS):} $[\x_0^{I}, \sigma_0^{I}, \epsilon^{I}, T_1] = \textsf{SSAS}(\x_0, \sigma_0, \epsilon_0, {\epsilon}, \gamma_1, \gamma_2, \delta_0, \kappa_{\theta})$.
\IF{$\| \nabla f(\x_0^{I}) \|^2 \le \epsilon$}
\STATE terminate Algorithm 1 [{\it early stop}], and return $\x_{out} = \x_0^{ASAS}$.
\ENDIF
\STATE \textbf{Phase II (ASAS):} $[\x_{out}, T_2, T_3] = \textsf{ASAS}(\x_0^{I}, \sigma_0^{I}, \sigma_{\min}, \epsilon^{I}, {\epsilon}, \varsigma_0, \gamma_1, \gamma_2, \gamma_3, \eta, \delta_0, \kappa_{\theta})$.
\STATE {Let $T = T_1 + T_2 + T_3$ [{\it record the total iteration number}].}
\STATE \textbf{Output:} an $\varepsilon$-optimal solution $\x_{out}$ and $T$.			
\end{algorithmic}\caption{Accelerated Subsampling Adaptive Cubic Regularized Newton's Method}\label{Algorithm:SAARC}
\end{algorithm}

{
\textbf{Constructing the cubic model}: 
Given the iteration point $\x_i$, cubic regularized parameter $\sigma_i$, tolerance of Hessian approximation $\epsilon_i$, the accuracy of the optimal solution $\epsilon$, and overall failure probability $\delta_0$. We adopt the notation $\textsf{Cubic}(\x_i, \sigma_i, \epsilon_i, \epsilon, \delta_0)$ to denote the generator of the cubic model as follows:
\begin{equation}\label{cubic-generator}
\textsf{Cubic}(\x_i, \sigma_i, \epsilon_i, \epsilon, \delta_0) \rightarrow f(\x_i) + \s^\top\nabla f(\x_i) + \frac{1}{2}\s^\top H(\x_i)\s + \frac{1}{3}\sigma_i\left\|\s\right\|^3,
\end{equation}
where ${H}(\x_i) = \tilde{H}(\x_i) + \epsilon_i {\BI}$, and $\tilde{H}(\x_i)$ is constructed according to \eqref{Condition: SSHessian} with sample size $|\SCal| \geq \Theta^U(\epsilon_i, \delta_0 \epsilon^{1/3})$ for uniform sampling ($\Theta^N(\epsilon_i, \delta_0 \epsilon^{1/3})$ for non-uniform sampling) {such that
$$\|\nabla^2 f(\x_i) - \tilde{H}(\x_i) \| \leq  \epsilon_i $$
with probability at least $1 - \delta_0 \epsilon^{1/3}$. If the above inequality holds, the approximated Hessian $H(\x_i)$ in the cubic model is also a good estimation, i.e.,
\begin{equation}\label{Hession-Approximation}
\|\nabla^2 f(\x_i) - H(\x_i)\| \leq {2}\epsilon_i.
\end{equation}
In addition, the convexity of $f$ implies that
\begin{equation}\label{H-PSD}
H(\x_i) = \tilde{H}(\x_i) + \epsilon_i \BI \succeq \nabla^2 f(\x_i) - \epsilon_i \BI + \epsilon_i \BI = \nabla^2 f(\x_i) \succeq 0.
\end{equation}
with probability at least  $1 - \delta_0 \epsilon^{1/3}$.
} 
}

\textbf{Solving the cubic model}:  Recall that $m(\s;\x_i,\sigma_i)$ is the cubic $\sigma_i$-regularized function at $\x_i$ defined in \eqref{prob:SARC}.
In each iteration, we approximately solve
\begin{equation} \label{prob:ARC}
\s_i\approx\argmin_{\s\in\br^d} \ m(\s;\x_i,\sigma_i),
\end{equation}
where $m(\s;\x_i,\sigma_i)$ is defined in \eqref{prob:SARC} and the symbol ``$\approx$'' is quantified as follows:
\begin{condition}\label{Cond:Approx_Subprob_SAARC}
	We call $\s_i$ to be an approximate solution of the subproblem -- denoted as $\s_i \approx \argmin_{\s\in\br^d} \ m(\s;\x_i,\sigma_i)$ -- for $\min_{\s\in\br^d} \ m(\s;\x_i,\sigma_i)$, if $m(\s_i;\x_i,\sigma_i) \le m(0;\x_i,\sigma_i) = f(\x_i)$ and
	\begin{equation}\label{Eqn:Approx_Subprob_SAARC}
	\|\nabla f(\x_i) + H(\x_i)\s_i + \sigma_i\|\s_i\|\s_i\| \leq \kappa_\theta \min \{\|\s_i\|^2, \| \nabla f(\x_i)\|\},
	\end{equation}
	where $0< \kappa_\theta < 1 $ is a pre-specified  constant.
\end{condition}
\begin{algorithm}[!t]
	\begin{algorithmic}
		\STATE \textbf{Initialization:} Let the total iteration count $i=0$.		
		\STATE {Generate cubic model $m(\s;\x_0,\sigma_0)$ with $\textsf{Cubic}(\x_0, \sigma_0, \epsilon_0, \epsilon, \delta_0)$ according to  \eqref{cubic-generator}}. 
		\STATE Let $\theta_0 = -1$.
		\WHILE{$\theta_i \leq 0$ }
		\STATE Compute $\s_i\in\br^d$ such that $\s_i \approx \argmin_{\s\in\br^d} \ m(\s;\x_0,\sigma_0)$ according to Condition \ref{Cond:Approx_Subprob_SAARC};
		\STATE Compute $\theta_i=m(\s_i; \x_i, \sigma_i)-f(\x_i+\s_i)$.
		\IF{$\theta_i>0$ [{\it successful iteration}]}
		\STATE Let $\x_{i+1}=\x_i+\s_i$, $\sigma_{i+1}= \sigma_i$,  
		\begin{equation*}
		\epsilon_{i+1} = \min\left\{\frac{\|\nabla f(\x_{i+1})\|}{6}, \ {  \epsilon_{0}}\right\} \mbox{[{\it  update tolerance of Hessian approximation}]}.
		\end{equation*}
		Update $i=i+1$.
		\ELSE
		\STATE $\x_{i+1}=\x_i$, $\sigma_{i+1} \in [\gamma_1\sigma_i, \gamma_2\sigma_i]$, $\epsilon_{i+1}=\epsilon_i$, update $i = i+1$.
		\ENDIF
		\ENDWHILE
		\STATE {Let $T_1 = i$ [{\it  record the iteration number}]}.
		\STATE Return $\x_i, \sigma_i, \epsilon_i$.
	\end{algorithmic}\caption{$\textsf{SSAS}(\x_0, \sigma_0, {\epsilon_0, \epsilon}, \gamma_1, \gamma_2, \delta_0, \kappa_\theta)$} \label{Alg:SSAS}
\end{algorithm}

\textbf{Solving the auxiliary model}: {The acceleration in Phase II is achieved by minimizing an auxiliary model:
\begin{equation*}
\psi_l(\z) = \psi_{l-1}(\z) + \frac{l(l+1)}{2}\left(f(\bar{\x}_{l-1})+\left(\z - \bar{\x}_{l-1}\right)^\top\nabla f(\bar{\x}_{l-1})\right) + \frac{1}{6}(\varsigma_{l}-\varsigma_{l-1}) \|\z - \bar{\x}_0\|^3,
\end{equation*}	
with $\psi_0(\z)={f}(\x_0) + \frac{1}{6}\varsigma_0\|\z-\bar{\x}_0\|^3$. To be specific, $\psi_l(\z)$ is used as a bridge to establish the iteration bounds in Theorem \ref{Theorem:SAARC} and Theorem \ref{Theorem:W-AARC}. Moreover, the minimizer of auxiliary model $\psi_l(\z)$ has a closed-form expression (see \citet{Nesterov-2008-Accelerating} and~\cite{Jiang-2020-Unified}):  $\bar{\x}_0-\sqrt{\frac{2}{\varsigma_l \|\nabla\ell_l(\z)\|}} {\nabla\ell_l(\z)}$ with 
$$\ell_l(\z) = \ell_{l-1}(\z) + \frac{l(l+1)}{2}\left(f(\bar{\x}_{l-1})+\left(\z - \bar{\x}_{l-1}\right)^\top\nabla f(\bar{\x}_{l-1})\right)\quad\mbox{and}\quad\ell_0(\z)= {f}(\x_0).$$
}

\begin{algorithm}[!t]
\begin{algorithmic}
\STATE \textbf{Initialization:} Let the total iteration count $i=0$, the successful iteration count $l=0$, the iteration count $k=0$ of updating $\varsigma_l$, and $\bar{\x}_0 = \x_{0}$.
\STATE Construct $\psi_0(\z) = f(\bar{\x}_0) + \frac{1}{6}\varsigma_0\|\z -\bar{\x}_0 \|^3$, and compute $\z_0= \argmin_{\z\in \br^d} \psi_0(\z)$. 
\STATE Let $\y_0 = \frac{1}{4}\bar{\x}_0 + \frac{3}{4}\z_0$ [{\it  generate base point for the cubic model}].
\STATE { Generate cubic model $m(\s;\y_0,\sigma_0)$ with $\textsf{Cubic}(\y_0, \sigma_0, \epsilon_0, \epsilon, \delta_0)$ according to \eqref{cubic-generator}}.
\FOR{$j=0,1,2,\ldots,$ {until convergence}}
\STATE Compute $\s_{j}\approx\argmin_{\s\in\br^d} m(\s;\y_l,\sigma_{j})$ using Condition \ref{Cond:Approx_Subprob_SAARC}, and $\rho_{j} = -\frac{\s_{j}^\top \nabla f(\y_l+\s_{j})}{\|\s_{j}\|^3}$.
\IF{$\rho_{j}\geq\eta$ [{\it successful iteration}]}
\STATE $\bar \x_{l+1} = \x_{j+1}=\y_l+\s_{j}$, $\sigma_{j+1} \in [\sigma_{\min}, \sigma_{j}]$, and let
\begin{equation*}
\epsilon_{j+1} = \min\left\{\frac{\|\nabla f({\y_{l}})\|}{4}, \ {  \epsilon_{0}}\right\}. \mbox{ [{\it  update tolerance of Hessian approximation}]}
\end{equation*}
\IF{$\|\nabla f(\x_{j+1}) \|^2 \le \epsilon$}
\STATE terminate Algorithm \ref{Alg:ASAS} [{\it  early stop}], and return $\x_{out} = \x_{j+1}$.
\ENDIF
\STATE Set $l=l+1$, $\varsigma_l = \varsigma_{l-1}$, and compute $\z_l = \argmin_{\z\in\br^d} \ \psi_l(\z)$.
\WHILE{$\psi_l(\z_l) < \frac{l(l+1)(l+2)}{6} f(\bar{\x}_l) $}
\STATE Set $\varsigma_l=\gamma_3\varsigma_l$, and $k = k+1$ [{\it  record the count of updating $\varsigma_l$}]. 
\STATE Update $\psi_l(\z) = \psi_{l-1}(\z) + \frac{l(l+1)}{2}[f(\bar \x_{l})+(\z-\bar \x_{l})^\top \nabla f(\bar \x_{l})] + \frac{1}{6}(\varsigma_l-\varsigma_{l-1})\|\z-\bar{\x}_0\|^3$. 
\STATE Compute $\z_l=\argmin_{\z\in\br^d}\psi_l(\z)$.
\ENDWHILE
\STATE Compute $\y_l=\frac{l}{l+3}\bar{\x}_l + \frac{3}{l+3}\z_l$ [{\it  generate base point for the cubic model}].
\STATE {  Generate cubic model $m(\s;\y_l,\sigma_{j+1})$ with $\textsf{Cubic}(\y_l, \sigma_{j+1}, \epsilon_{j+1}, \epsilon, \delta_0)$ according to \eqref{cubic-generator}}.
\ELSE
\STATE Let $\x_{j+1}=\x_{j}$, $\sigma_{j+1} \in [\gamma_1\sigma_j, \gamma_2\sigma_j]$;
\ENDIF
\ENDFOR
\STATE { Let $T_2 = j+1$ [{\it  record the number of solving the cubic subproblem}].
\STATE 	Let $T_3 = k$ [{\it  record the total number of updating $\varsigma_l$}].}
\STATE Return $\x_{out} = \bar \x_l$, $T_2$ and $T_3$.
\end{algorithmic} \caption{$\textsf{ASAS}(\x_0, \sigma_0, \sigma_{\min}, \epsilon_0, {\epsilon}, \varsigma_0, \gamma_1, \gamma_2, \gamma_3, \eta, \delta, \kappa_{\theta})$}\label{Alg:ASAS}
\end{algorithm}

\subsection{Overview of the Analysis}
{ Recall in our algorithms that $T_1$ is the total number of iterations in Phase I, $T_2$ is the total number of solving the cubic model in Phase II, and $T_3$ is the total count of updating the parameter $\varsigma_l$ in the auxiliary model. Then the iteration complexity is established if we are able to bound $T_1$, $T_2$ and $T_3$.
Before presenting the technical analysis, we sketch some major steps as follows, 
\begin{enumerate}
\item Upper bound $T_1$ in Lemma \ref{Lemma:SAARC-T1} (Lemma \ref{Lemma:W-AARC-T1} for worst case analysis) .
\item Prove $T_2$ to be $|\SucCal|$ multiplied by some factors in Lemma \ref{Lemma:SAARC-T2} (Lemma \ref{Lemma:W-AARC-T2} for worst case analysis), where $\SucCal = \{j\leq T_2: j \; \text{is a successful iteration}\}$ is the index set of all successful iterations in Phase II.
\item Upper bound $T_3$ in Lemma \ref{Lemma:SAARC-T3} (Lemma \ref{Lemma:W-AARC-T3} for worst case analysis).
\item Upper bound $|\SucCal|$ in Theorem \ref{Theorem:SAARC} (Theorem \ref{Theorem:W-AARC} for worst case analysis).
\item Put the pieces together, and prove the iteration bound in Theorem \ref{Theorem:SAARC-Iteration-Complexity} (Theorem \ref{Thm:W-AARC-Main} for worst case analysis).
\end{enumerate} 
For probabilistic iteration complexity, the quantities in the bounds of $T_1$, $T_2$ and $T_3$ (except for $|\SucCal |$) depend only on the problem parameters (see Lemmas \ref{Lemma:SAARC-T1}--\ref{Lemma:SAARC-T3}) thus do not affect the magnitude of the iteration bound. While for the worst-case iteration complexity, those quantities in Lemmas \ref{Lemma:W-AARC-T1}--\ref{Lemma:W-AARC-T3} depend on $\epsilon$, i.e., the solution accuracy. Such dependence will eventually deteriorate the iteration complexity bound, 
such as the one presented in Theorem \ref{Thm:W-AARC-Main}.}

\section{Probabilistic Iteration Complexity} \label{Section:Probabilistic}
Now we are in a position to provide iteration complexity analysis for Algorithm~\ref{Algorithm:SAARC}. We shall show that Algorithm \ref{Algorithm:SAARC} retains the iteration complexity of $O(\epsilon^{-1/3})$, the same as that of the non-adaptive version \citep{Nesterov-2008-Accelerating}, even though the sub-problem is now only solved approximately with sub-sampled Hessian. {In the following, to highlight the flow of our analysis, we shall present the contents of the key technical lemmas while relegating the proofs to the appendix.}

We first provide Lemma \ref{Lemma:SAARC-T1} and Lemma \ref{Lemma:SAARC-T2}, which {describe} the relation between the total iteration {number} in Algorithm \ref{Algorithm:SAARC} and the amount of successful iterations $|\SucCal|$ in Phase II.
\begin{lemma}\label{Lemma:SAARC-T1}
Suppose in each iteration $i$ of Algorithm \ref{Alg:SSAS},  the sub-sampled Hessian $\tilde{H}(\x_i)$ satisfies 
	\begin{equation}\label{Sub-Sampled-Hession-Approximation}
	\|\nabla^2 f(\x_i) - \tilde H(\x_i)\| \leq \epsilon_i.
	\end{equation} 
Denoting $\bar{\sigma}^P_1={ \max\{\sigma_0, \ 3\gamma_2 + 0.5\bar\rho \gamma_2, \ \gamma_2  (L+\epsilon_0+ \kappa_\theta +   \bar \rho   )      \} }$, it holds that
\begin{equation*}
T_1 \leq \left\lceil 1+ \frac{{1}}{\log\gamma_1} \log\left(\frac{\bar{\sigma}^P_1}{\sigma_{\min}}\right)\right\rceil.
\end{equation*}
\end{lemma}	

\begin{lemma}\label{Lemma:SAARC-T2}
Suppose in each iteration $j$ of Algorithm \ref{Alg:ASAS},
 the sub-sampled Hessian $\tilde{H}(\x_j)$ satisfies $\|\nabla^2 f(\x_j) - \tilde H(\x_j)\| \leq \epsilon_j$. Denoting 
\begin{equation*}
\bar{\sigma}^P_2 = { \max\left\{\bar{\sigma}^P_1, \ \frac{\gamma_2\bar{\rho}}{2} + \gamma_2\kappa_\theta+\gamma_2\eta+2\gamma_2, \ \gamma_2 L + \gamma_2\epsilon_0+ \gamma_2\bar\rho + 3\gamma_2 \kappa_\theta + 2\gamma_2\eta \right\}} > 0, 
\end{equation*}
and $\SucCal$ to be the set of successful iterations in Algorithm \ref{Alg:ASAS}, it holds that
\begin{equation*}
T_2 \leq \left\lceil 1+ \frac{2}{\log\gamma_1} \log\left(\frac{\bar{\sigma}^P_2}{\sigma_{\min}}\right)\right\rceil |\SucCal|.
\end{equation*}
\end{lemma}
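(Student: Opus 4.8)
The plan is to follow the classical ``trap the regularization parameter, then count'' argument for adaptive cubic regularization, adapted to Phase~II of Algorithm~\ref{Algorithm:SAARC}. The proof decomposes into two parts: (i) an a~priori upper bound $\sigma_{T_1+j}\le\bar\sigma_2$ valid for every $j\le T_2$, and (ii) a telescoping estimate on $\log\sigma$ that converts this bound into the stated relation between $T_2$ and $|\SucCal|$.

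For part~(i) I would prove $\sigma_{T_1+j}\le\bar\sigma_2$ by induction on $j$, the base case $\sigma_{T_1}\le\bar\sigma_1\le\bar\sigma_2$ being exactly what the proof of Lemma~\ref{Lemma:SAARC-T1} yields for the terminal Phase~I iterate. For the inductive step the crux is the implication ``$\sigma_{T_1+j}$ large $\Rightarrow$ iteration $T_1+j$ is successful''. To establish it, put $\s=\s_{T_1+j}$, $\sigma=\sigma_{T_1+j}$, and use that $\nabla^2 f$ is $\bar\rho$-Lipschitz (a consequence of Assumption~\ref{Assumption-Objective-Gradient-Hessian}) to write $\nabla f(\y_l+\s)=\nabla f(\y_l)+\nabla^2 f(\y_l)\s+r$ with $\|r\|\le\frac{\bar\rho}{2}\|\s\|^2$. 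Substituting $\nabla f(\y_l)=\nabla m(\y_l,\s,\sigma)-H(\y_l)\s-\sigma\|\s\|\s$, invoking Condition~\ref{Cond:Approx_Subprob_SAARC} in the form $\|\nabla m(\y_l,\s,\sigma)\|\le\kappa_\theta\|\s\|^2$, and using the hypothesis $\|\nabla^2 f(\y_l)-H(\y_l)\|\le\epsilon_{T_1+j}$ gives
\[
\rho_{T_1+j}=\frac{-\,\s^\top\nabla f(\y_l+\s)}{\|\s\|^3}\ \ge\ \sigma-\kappa_\theta-\frac{\bar\rho}{2}-\frac{\epsilon_{T_1+j}}{\|\s\|}.
\]
The last term is controlled by a case split: if $\|\s\|>1$ then $\epsilon_{T_1+j}/\|\s\|<\epsilon_{T_1+j}\le 1$; if $\|\s\|\le 1$, then from $\nabla f(\y_l)=\nabla m(\y_l,\s,\sigma)-H(\y_l)\s-\sigma\|\s\|\s$ and $\|H(\y_l)\|\le\bar L+\epsilon_{T_1+j}\le L+1$ one obtains $\|\nabla f(\y_l)\|\le(\kappa_\theta+L+1+\sigma)\|\s\|$, whence the tolerance update $\epsilon_{T_1+j}\le\frac{(1-\kappa_\theta)\|\nabla f(\y_l)\|}{2}$ forces $\epsilon_{T_1+j}/\|\s\|\le\frac{\kappa_\theta+L+1+\sigma}{2}$. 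In either case $\rho_{T_1+j}\ge\eta$ once $\sigma$ exceeds a threshold built from $\bar\rho,L,\kappa_\theta,\eta$; tracking the constants and the worst-case blow-up factor $\gamma_2$ incurred on one further unsuccessful step gives precisely $\sigma_{T_1+j}\le\bar\sigma_2$ with $\bar\sigma_2$ as in the statement. Hence an unsuccessful iteration forces $\sigma_{T_1+j}$ below the threshold, so $\sigma_{T_1+j+1}\le\gamma_2\sigma_{T_1+j}\le\bar\sigma_2$, while a successful one gives $\sigma_{T_1+j+1}\le\sigma_{T_1+j}\le\bar\sigma_2$ by the inductive hypothesis; the induction closes.

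For part~(ii) I would telescope $\log\sigma$ across Phase~II. On an unsuccessful iteration $\log\sigma_{T_1+j+1}-\log\sigma_{T_1+j}\ge\log\gamma_1$, whereas on a successful iteration $\sigma_{T_1+j+1}\ge\sigma_{\min}$ together with $\sigma_{T_1+j}\le\bar\sigma_2$ (part~(i)) gives $\log\sigma_{T_1+j+1}-\log\sigma_{T_1+j}\ge-\log(\bar\sigma_2/\sigma_{\min})$. Summing over $j=0,\dots,T_2-1$ and using $\log\sigma_{T_1}\ge\log\sigma_{\min}$ and $\log\sigma_{T_1+T_2}\le\log\bar\sigma_2$ bounds the number of unsuccessful iterations by $\frac{1+|\SucCal|}{\log\gamma_1}\log(\bar\sigma_2/\sigma_{\min})$; adding $|\SucCal|$ and using $|\SucCal|\ge 1$ yields $T_2\le\bigl(1+\frac{2}{\log\gamma_1}\log(\bar\sigma_2/\sigma_{\min})\bigr)|\SucCal|$.

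The main obstacle is part~(i), and within it the estimate of $\epsilon_{T_1+j}/\|\s\|$: a careless lower bound on $\|\s\|$ drags a $\sigma_{T_1+j}$-dependence onto the wrong side of the inequality, so one must lean on the factor-$\frac{1}{2}$ slack in the tolerance update and keep $\sigma_{T_1+j}$ pinned below the threshold inside the case $\|\s\|\le 1$. A secondary bookkeeping point is that in Phase~II the sub-sampled Hessian is formed at the extrapolation point $\y_l$ rather than at $\x_{T_1+j}$, so the hypothesis $\|\nabla^2 f(\x_i)-H(\x_i)\|\le\epsilon_i$ must be read at the relevant point; the algebra above is otherwise unaffected.
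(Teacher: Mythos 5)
Your proposal follows essentially the same route as the paper's proof: bound $\s_{T_1+j}^\top\nabla f(\y_l+\s_{T_1+j})$ via the Taylor expansion, Condition~\ref{Cond:Approx_Subprob_SAARC} and the Hessian-approximation hypothesis, split on $\left\|\s_{T_1+j}\right\|\gtrless 1$ with the tolerance update controlling $\epsilon_{T_1+j}/\left\|\s_{T_1+j}\right\|$, conclude that $\sigma_{T_1+j}$ above a fixed threshold forces a successful iteration so that $\sigma_{T_1+j}\le\bar{\sigma}_2$ throughout Phase II, and then count unsuccessful iterations by telescoping the $\sigma$-ratios exactly as in the paper. The one small caveat is in the case $\left\|\s_{T_1+j}\right\|\le 1$: the paper uses the sharper consequence $\left\|\nabla m(\y_l,\s_{T_1+j},\sigma_{T_1+j})\right\|\le\kappa_\theta\left\|\s_{T_1+j}\right\|\left\|\nabla f(\y_l)\right\|$ to obtain $(1-\kappa_\theta)\left\|\nabla f(\y_l)\right\|\le (L+\sigma_{T_1+j})\left\|\s_{T_1+j}\right\|$ and hence $\epsilon_{T_1+j}/\left\|\s_{T_1+j}\right\|\le (L+\sigma_{T_1+j})/2$, whereas your bound $\left\|\nabla f(\y_l)\right\|\le(\kappa_\theta+L+1+\sigma_{T_1+j})\left\|\s_{T_1+j}\right\|$ produces a slightly larger success threshold, so to land exactly on the stated constant $\bar{\sigma}_2$ (rather than a marginally bigger one of the same form) you should use that sharper form of Condition~\ref{Cond:Approx_Subprob_SAARC}.
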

Then we estimate an upper bound on $T_3$: the total counts updating $\varsigma>0$.
\begin{lemma}\label{Lemma:SAARC-T3}
Suppose in each iteration $j$ of Algorithm \ref{Alg:ASAS},
 the sub-sampled Hessian $\tilde{H}(\x_j)$ satisfies $\|\nabla^2 f(\x_j) - \tilde H(\x_j)\| \leq \epsilon_j$. It holds that
\begin{equation}\label{Inequality:induction}
\psi_l(\z_l)\ge\frac{l(l+1)(l+2)}{6} f(\bar{\x}_l)
\end{equation}
when $\varsigma_l \ge \bar \varsigma^P := 8\eta^{-2}(\bar{\rho} + (2\kappa_\theta + 2)L + 2\bar{\sigma}^P_2 +\kappa_\theta+1)^3$, which further implies
\begin{equation*}
T_3 \leq \left\lceil 1 + \frac{1}{\log\left(\gamma_3\right)}\log \left[\frac{8\left({ \frac{\bar{\rho}}{2}+ 2\kappa_\theta + L  + 2\bar{\sigma}^P_2+1}\right)^3}{\eta^2 \, \varsigma_0}\right]\right\rceil.
\end{equation*}
\end{lemma}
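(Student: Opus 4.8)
The plan is to establish the inductive inequality~\eqref{Inequality:induction} first and then read off the bound on $T_3$ as a short corollary. For the corollary, note that throughout Algorithm~\ref{Algorithm:SAARC} the scalar $\varsigma$ is never decreased: a successful iteration carries it over through $\varsigma=\varsigma_{l-1}$, and the only other update is $\varsigma\leftarrow\gamma_3\varsigma$ inside the inner while-loop. Abbreviate the threshold in the statement by $C_\star:=\left(\frac{\bar{\rho}+2L+2\bar{\sigma}_2+1}{1-\kappa_\theta}\right)^{3}\eta^{-2}$. Inequality~\eqref{Inequality:induction} says that once the current value of $\varsigma$ reaches $C_\star$, the while-condition $\psi_l(\z_l)<\frac{l(l+1)(l+2)}{6}f(\bar{\x}_l)$ is violated, so no further updates of $\varsigma$ can occur anywhere in the run. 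Since $\varsigma$ starts at $\varsigma_1$ and each while-iteration multiplies it by $\gamma_3$, the total number $T_3$ of such iterations satisfies $\gamma_3^{\,T_3-1}\varsigma_1<C_\star$, which is exactly the claimed $T_3\le\left\lceil\frac{1}{\log\gamma_3}\log\left(C_\star/\varsigma_1\right)\right\rceil$.

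The substance is~\eqref{Inequality:induction}, which I would prove by induction on the count $l$ of successful Phase II iterations, following the estimate-sequence machinery of \cite{Nesterov-2008-Accelerating, Jiang-2017-Unified}. Write $A_l:=\frac{l(l+1)(l+2)}{6}$, so $A_l-A_{l-1}=\frac{l(l+1)}{2}$; by construction $\psi_l$ equals a cubic regularizer $\frac16\varsigma_l\|\z-\bar{\x}_1\|^3$ plus a weighted sum (weights $\frac{k(k+1)}{2}$) of first-order models of $f$ at $\bar{\x}_1,\dots,\bar{\x}_l$ --- in particular it is a cubic regularizer plus an affine function. The base case $l=1$ is immediate: $\z_1=\argmin_\z\psi_1(\z)=\bar{\x}_1$ and $A_1=1$, so $\psi_1(\z_1)=f(\bar{\x}_1)=A_1 f(\bar{\x}_1)$ for any $\varsigma_1>0$. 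For the inductive step I would keep the invariant $\psi_{l-1}(\z_{l-1})\ge A_{l-1}f(\bar{\x}_{l-1})$, feed it and the uniform convexity of degree three of $\psi_{l-1}$ (coming from its cubic term) into the given recursion for $\psi_l$, use convexity of $f$ to pass from $f(\bar{\x}_{l-1})$ to its linearization at the new iterate, and carry out the exact minimization over $\z$ of the resulting ``affine plus $\varsigma_l$-cubic'' function (which costs a term of order $\|v_l\|^{3/2}\varsigma_l^{-1/2}$, $v_l$ being the accumulated gradient). The specific extrapolation weights $\y_l=\frac{l}{l+3}\bar{\x}_l+\frac{3}{l+3}\z_l$ are engineered precisely so that the leftover ``momentum'' contribution telescopes, leaving $\psi_l(\z_l)\ge A_l f(\bar{\x}_l)+(\text{favorable term})-E_l$, where the favorable term is produced by the successful-iteration test $\rho_{T_1+j}\ge\eta$, i.e.\ $-\s_{T_1+j}^\top\nabla f(\y_l+\s_{T_1+j})\ge\eta\|\s_{T_1+j}\|^3$, together with convexity of $f$ (which gives $f(\bar{\x}_l)\le f(\y_l)-\eta\|\s_{T_1+j}\|^3$), and $E_l$ gathers all the inexactness. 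The induction closes provided $\varsigma_l$ is large enough that $E_l$ is dominated by the favorable term.

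Bounding $E_l$ is the real work, and I expect it to be the main obstacle; writing $\s$ for $\s_{T_1+j}$, it has three sources. First, the inexact Hessian: under the hypothesis $\|H(\y_l)-\nabla^2 f(\y_l)\|\le\epsilon_{T_1+j}$ the model term $\frac12\s^\top H(\y_l)\s$ differs from $\frac12\s^\top\nabla^2 f(\y_l)\s$ by at most $\frac12\epsilon_{T_1+j}\|\s\|^2$, which I would turn into an $O(\|\s\|^3)$ term via the algorithmic relation $\epsilon_{T_1+j}\le\frac{1-\kappa_\theta}{2}\|\nabla f(\y_l)\|$ and the fact that the stationarity condition of the cubic model makes $\|\nabla f(\y_l)\|$ comparable to $L\|\s\|+\sigma\|\s\|^2$. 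Second, the approximate sub-problem: Condition~\ref{Cond:Approx_Subprob_SAARC} gives $\|\nabla m(\y_l,\s,\sigma)\|\le\kappa_\theta\min(1,\|\s\|)\min(\|\s\|,\|\nabla f(\y_l)\|)$, and combined with Hessian-Lipschitzness (constant $\bar{\rho}$) to relate $\nabla m(\y_l,\s,\sigma)$ and $\nabla f(\y_l+\s)$, this controls $\|\nabla f(\y_l+\s)\|$ and the gap between $f(\y_l+\s)$ and $m(\y_l,\s,\sigma)$ up to $O(\|\s\|^3)$. Third, the cubic weight $\sigma_{T_1+j}$ itself must be bounded, and for this I would borrow $\sigma_{T_1+j}\le\bar{\sigma}_2$ from the proof of Lemma~\ref{Lemma:SAARC-T2}. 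Collecting coefficients, $E_l$ is at most a constant times $\left(\frac{\bar{\rho}+2L+2\bar{\sigma}_2+1}{1-\kappa_\theta}\right)^{3/2}\varsigma_l^{-1/2}\|\s\|^3$, so requiring $\varsigma_l$ to dominate the cube of that quantity divided by $\eta^2$ --- exactly $\varsigma_l\ge C_\star$ --- makes $E_l\le\eta\|\s\|^3$ and closes the induction. The algebraic identities ($A_l-A_{l-1}=\frac{l(l+1)}{2}$, the extrapolation-weight cancellations) are routine once organized as in \cite{Nesterov-2008-Accelerating, Jiang-2017-Unified}; the delicate part is propagating the relation $\epsilon_i\le\frac{1-\kappa_\theta}{2}\|\nabla f(\y_l)\|$ and the implied step-length estimates through the recursion so that every term of $E_l$ is genuinely $O(\|\s\|^3)$ with an explicitly tracked constant.
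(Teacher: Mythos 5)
Your proposal is correct and follows essentially the same route as the paper: induction on $l$ using the uniform convexity of $\psi_{l-1}$ (the paper's Lemma~\ref{Lemma:SAARC-T3-P1}), the extrapolation-weight identity, the affine-plus-cubic minimization bound (Lemma~\ref{Lemma:General-Second-Order}), the success test giving the $\eta\|\s\|^3$ term, and your ``$E_l$'' analysis is exactly the content of the paper's Lemma~\ref{Lemma:SAARC-T3-P2}, which converts $\|\nabla f(\bar{\x}_l)\|$ into $O(\|\s\|^2)$ using Condition~\ref{Cond:Approx_Subprob_SAARC}, the update rule $\epsilon_i\le\frac{(1-\kappa_\theta)\|\nabla f(\y_l)\|}{2}$, the Lipschitz constants, and $\sigma_{T_1+j}\le\bar{\sigma}_2$. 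Your monotonicity argument for the $T_3$ bound is also the one the paper implicitly uses.
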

In the rest of this section, the total number of iterations of the two subroutines (i.e.\ Algorithm \ref{Alg:SSAS} and Algorithm \ref{Alg:ASAS}) is referred to as the iteration complexity of Algorithm \ref{Algorithm:SAARC}. {To continue our analysis, we prove the following theorem to provide a bound on
the number of successful iterations in Algorithm \ref{Alg:ASAS}.}
\begin{theorem}\label{Theorem:SAARC}
Suppose in each iteration $i$ of Algorithm \ref{Algorithm:SAARC},
 the sub-sampled Hessian $\tilde{H}(\x_i)$ satisfies \eqref{Sub-Sampled-Hession-Approximation}.  Then the sequence $\{\bar{\x}_l, \ l=0,1,\ldots\}$ generated by Algorithm \ref{Alg:ASAS} satisfies
\begin{eqnarray*}
& & \frac{l(l+1)(l+2)}{6}f(\bar{\x}_l) \leq \psi_l(\z_l) \leq \psi_l(\z) \\
& \leq & \frac{l(l+1)(l+2)}{6}f(\z) + {8\kappa_\theta D^3} +  \frac{\bar L +\epsilon_0}{2}\|\z-\x_{0}\|^2 + \frac{\bar \sigma^P_{1}}{3}\|\z-\x_{0}\|^3 + \frac{\varsigma_l}{6}\|\z-\bar{\x}_0\|^3.
\end{eqnarray*}
\end{theorem}
\begin{proof}
The proof is based on mathematical induction. The base case $l =0$ corresponds to $f(\bar{\x}_0) = \psi_0(\z_0)$, which follows from the definition of $\psi_0(\z)$. It suffices to show the inequality on the right hand side. Denote $ \x_0 \in \br^d$ as the initial iterate in Algorithm \ref{Alg:SSAS}, $\bar{\x}_0$ is the output returned by Algorithm {\ref{Alg:SSAS}} and $\bar \s^m_{0}$ as a global minimizer of $m(\s, \x_{0}, \sigma_0^{ASAS})$ over $\br^d$. We also note that for each $\sigma_i$ in Algorithm \ref{Alg:SSAS}, $\sigma_i \ge \sigma_{\min}$ and thus $\mathcal{L}(\x_0,\sigma_i) \subseteq \mathcal{L}(\x_0, \sigma_{\min})$. Then, noting $\bar \x_0 = \x_0 + \bar \s_0$, by \eqref{bounded-level-set} one has
\begin{equation}\label{Bounded-levelset-s}
\|\x_0 + \bar \s_0 - \x^\star\| \leq D \quad \mbox{and} \quad \|\x_0 + \bar \s_0^m - \x^\star\| \leq D.
\end{equation}
Furthermore, by the criterion of successful iteration in Algorithm \ref{Alg:SSAS},
\begin{equation*}
f(\bar{\x}_0) \leq m(\bar \s_{0}, \x_{0}, \sigma_0^{ASAS}) = (m(\bar\s_{0}, \x_{0}, \sigma_0^{ASAS}) - m(\bar\s^m_{0}, \x_{0}, \sigma_0^{ASAS})) + m(\bar\s^m_{0}, \x_{0}, \sigma_0^{ASAS}).
\end{equation*}
{Since $\|\nabla^2 f(\x_i) - \tilde{H}(\x_i)\| \leq {\epsilon_i}$ for all $i$, and $f$ is convex, we have \eqref{H-PSD} holds and $H(\x_i) \succeq 0$.}
{Besides, we note that $\nabla^2\left(\|\s\|^3\right)= 3\left(\|\s\| \cdot \BI + \s \s^{\top}\right)\succeq 0$.} Therefore, $m(\s, \x_{0},\sigma_0^{ASAS})$ is convex and we have
\begin{eqnarray*}
& & m(\bar \s_{0}, \x_{0}, \sigma_0^{ASAS}) - m(\bar \s^m_{0}, \x_{0}, \sigma_0^{ASAS}) \\
& \leq & (\nabla f(\x_0) + H(\x_0){\bar\s_0} + \sigma_0^{ASAS}{\|\bar\s_0\|\cdot\bar \s_0})^\top(\bar\s_{0} - \bar\s^m_{0}) \\
& \leq & \|\nabla f(\x_0) + H(\x_0){\bar\s_0} +  \sigma_0^{ASAS}{\|\bar\s_0\|\cdot\bar \s_0}\| \cdot \|\bar\s_{0} - \bar\s^m_{0}\| \\
& {\eqref{Eqn:Approx_Subprob_SAARC} \above 0pt \leq} & \kappa_\theta\|\bar\s_{0}\|^2\| \bar\s_{0} - \bar\s^m_{0}\| \\
& \leq & \kappa_\theta\|\bar\s_{0} + \x_0 - \x^\star - (\x_0 - \x^\star)\|^2\|\bar\s_{0} + \x_0 - \x^\star - (\bar\s^m_{0} + \x_0 - \x^\star)\| \\
& {\eqref{Bounded-levelset-s}\, \eqref{bounded-level-set} \above 0pt \leq } & {8\kappa_\theta D^3}.
\end{eqnarray*}
On the other hand, we also have
\begin{eqnarray*}
& & m(\bar \s^m_{0}, \x_{0}, \sigma_0^{ASAS}) \\
& = & f(\x_{0}) + (\bar \s^m_{0})^\top \nabla f(\x_{0}) + \frac{1}{2} (\bar \s^m_{0})^\top H(\x_{0}) \bar \s^m_{0} + \frac{1}{3}\sigma_0^{ASAS}\| \bar \s^m_{0}\|^3  \\
& \leq & f(\x_{0}) + (\z-\x_{0})^\top \nabla f(\x_{0}) + \frac{1}{2} (\z-\x_{0})^\top \nabla^2 f(\x_{0})(\z-\x_{0}) + \frac{\epsilon_{0}}{2}\|\z-\x_{0}\|^2  + \frac{\sigma_0^{ASAS}}{3}\|\z-\x_{0}\|^3 \\
& \leq & f(\z) + \frac{\bar{L}}{2}\|\z-\x_{0}\|^2 + \frac{\epsilon_0}{2}\|\z-\x_{0}\|^2 + \frac{\sigma_0^{ASAS}}{3}\|\z-\x_{0}\|^3 \\
& \leq & f(\z) + \frac{\bar L +\epsilon_0}{2}\|\z-\x_{0}\|^2 + \frac{\bar \sigma_{1}^P}{3}\|\z-\x_{0}\|^3 ,
\end{eqnarray*}
where the second inequality is due to the convexity of $f$ and \eqref{Bounded-Hessian}. Therefore,
\begin{equation*}
\psi_0(\z) = f(\bar{\x}_0)  + \frac{1}{6}\varsigma_0\|\z-\bar{\x}_0\|^3 \leq f(\z) + {8\kappa_\theta D^3} + \frac{\bar L +\epsilon_0}{2}\|\z-\x_{0}\|^2 + \frac{\bar \sigma_{1}^P}{3}\|\z-\x_{0}\|^3 + \frac{1}{6}\varsigma_0\|\z-\bar{\x}_0\|^3.
\end{equation*}
Now suppose that the theorem is proven for some $l\geq 1$. Let us consider the case of $l+1$:
\begin{eqnarray*}
& & \psi_{l+1}(\z_{l+1}) \ \leq \ \psi_{l+1}(\z) \\
& = & \psi_{l}(\z) + \frac{(l+1)(l+2)}{2}[f(\bar{\x}_l)+(\z-\bar{\x}_{l})^\top \nabla f(\bar{\x}_l)]+ \frac{1}{6}(\varsigma_{l+1}-\varsigma_l)\|\z-\bar{\x}_0t\|^3 \\
& \leq & \frac{l(l+1)(l+2)}{6}f(\z) + {8\kappa_\theta D^3} + \frac{\bar L +\epsilon_0}{2}\|\z-\x_{0}\|^2 + \frac{\bar \sigma_{1}^P}{3}\|\z-\x_{0}\|^3  + \frac{\varsigma_l}{6}\|\z-\bar{\x}_0\|^3 \\
& & + \frac{(l+1)(l+2)}{2}[f(\bar{\x}_l)+(\z-\bar{\x}_l)^\top\nabla f(\bar{\x}_l)]+ \frac{1}{6}(\varsigma_{l+1}-\varsigma_l)\|\z-\bar{\x}_0\|^3 \\
& \leq & \frac{(l+1)(l+2)(l+3)}{6}f(\z) + {8\kappa_\theta D^3} + \frac{\bar L +\epsilon_0}{2}\|\z-\x_{0}\|^2 + \frac{\bar \sigma_{1}^P}{3}\|\z-\x_{0}\|^3 + \frac{\varsigma_{l+1}}{6}\|\z-\bar{\x}_0\|^3,
\end{eqnarray*}
where the last inequality is due to convexity of $f(\z)$. On the other hand, noting the way that $\psi_{l+1}(\z)$ is updated we have $\frac{(l+1)(l+2)(l+3)}{6}{f}(\bar{\x}_{l+1}) \leq \psi_{l+1}(\z_{l+1})$. The theorem is thus proven by induction.
\end{proof}
After establishing Theorem~\ref{Theorem:SAARC}, the iteration complexity of Algorithm \ref{Algorithm:SAARC} readily follows.
\begin{theorem}\label{Theorem:SAARC-Iteration-Complexity}
Let $\epsilon$ be the accuracy of optimality, {$\epsilon_i$ be the tolerance of sub-sampled Hessian approximation in \eqref{Sub-Sampled-Hession-Approximation}} for iteration $i$, and {$\delta_0$} be the probability that inequality \eqref{Sub-Sampled-Hession-Approximation} fails for at least one iteration. When Algorithm~\ref{Algorithm:SAARC} runs
\begin{eqnarray*}
T & = & \left\lceil 1 + \frac{2}{\log(\gamma_1)}\log \left(\frac{\bar{\sigma}^P_1}{\sigma_{\min}}\right) \right\rceil + \left\lceil 1 + \frac{2}{\log(\gamma_1)}\log\left(\frac{\bar{\sigma}^P_2}{\sigma_{\min}}\right)\right\rceil \left\lceil\left(\frac{C^P}{\epsilon}\right)^{\frac{1}{3}}\right\rceil \\
& & + \left\lceil \frac{1}{\log\left(\gamma_3\right)}\log \left[\frac{{8\eta^{-2}(0.5\bar{\rho} + 2\kappa_\theta + L +{ \epsilon_0} + 2\bar{\sigma}^P_2+2)^3}}{\eta^2 \, \varsigma_0}  \right] +1\right\rceil \\
& = & \OCal(\epsilon^{-1/3})
\end{eqnarray*}
iterations (including the successful iterations to update $\varsigma$), then with probability $1-{\delta_0}$ we have $f(\x_{out}) - f^\star \leq \epsilon$, where $C^P = D^3({48\kappa_\theta} + 2{\bar\sigma}^P_1 + \gamma_3\bar\varsigma^P) + 3D^2(\bar L +\epsilon_0)$.
\end{theorem}
\begin{proof}
{
Under the probability assumption of Theorem~\ref{Theorem:SAARC} and by taking $\z=\x^\star$, we have that
\begin{eqnarray*}
	& & \frac{l(l+1)(l+2)}{6}f(\bar{\x}_l) \\
	& \leq & \frac{l(l+1)(l+2)}{6}f(\x^\star) + {8\kappa_\theta D^3} +  \frac{\bar L +\epsilon_0}{2}\|\x^\star-\x_{0}\|^2 + \frac{{\bar \sigma}^P_1}{3}\|\x^\star-\x_{0}\|^3 + \frac{\varsigma_{l}}{6}\|\x^\star-\bar{\x}_0\|^3\\
	& \leq & \frac{l(l+1)(l+2)}{6}f^\star + \left({8\kappa_\theta} + \frac{{\bar\sigma}^P_1}{3} + \frac{\gamma_3\bar\varsigma^P}{6}\right)D^3 + \left(\frac{\bar L +\epsilon_0}{2}\right)D^2.
\end{eqnarray*}
Rearranging the terms yields that
$$
f(\bar \x_l) - f^* \le \frac{C^P}{l(l+1)(l+2)} {<} \frac{C^P}{l^3}.
$$
Recall that $l$ is the count of successful iterations and $\SucCal$ is  the index set of all successful iterations in Algorithm \ref{Alg:ASAS}.
{Then $|\SucCal| = l < \left(\frac{C^P}{\epsilon}\right)^{1/3}$ whenever $f(\bar{x}_{l}) - f^* \ge \epsilon$. Therefore, by choosing $T_2 = \left\lceil 1 + \frac{2}{\log(\gamma_1)}\log\left(\frac{\bar{\sigma}^P_2}{\sigma_{\min}}\right)\right\rceil\left\lceil\left(\frac{C^P}{\epsilon}\right)^{1/3}\right\rceil$
and by Lemma \ref{Lemma:SAARC-T2}, we have $l = |\SucCal| \ge \left\lceil\left(\frac{C^P}{\epsilon}\right)^{1/3}\right\rceil $, which further implies $f(\bar{x}_{l}) - f^* < \epsilon$.}
Denote $\hat T = T_1 + T_2$ to be the total number of iterations that generates sub-sampled Hessian in Algorithm \ref{Algorithm:SAARC}.
{Then combining the choice of $T_2$ and Lemma \ref{Lemma:SAARC-T1} yields that}
\begin{equation*}
\hat T = \left\lceil 1 + \frac{2}{\log(\gamma_1)}\log \left(\frac{\bar{\sigma}^P_1}{\sigma_{\min}}\right) \right\rceil+ \left\lceil 1 + \frac{2}{\log(\gamma_1)}\log\left(\frac{\bar{\sigma}^P_2}{\sigma_{\min}}\right)\right\rceil\left\lceil\left(\frac{C^P}{\epsilon}\right)^{1/3}\right\rceil = \OCal(\epsilon^{-1/3})
\end{equation*}	
To ensure an overall accumulative success probability of $1-{\delta_0}$ for the entire $\hat T$ iterations, the per-iteration failure probability is set as $1-\sqrt[\hat T]{1-\delta_0} = \OCal(\delta_0/\hat T)= \OCal(\delta_0\epsilon^{1/3})$; see~\citet{Xu-2019-Newton} for more details. Therefore, by setting $\hat \epsilon= \epsilon_i$ and $\delta = \delta_0 \epsilon^{1/3}$ in Lemma \ref{Lemma:Uniform-Sample} (or Lemma \ref{Lemma:NonUniform-Sample}) and 
we have that $\|\nabla^2 f(\x_i) - {\tilde H(\x_i)}\| \leq \epsilon_i $ for all $i \le T_1 + T_2$ with probability $1 - \delta_0$. As a result, the probability assumption in Theorem~\ref{Theorem:SAARC} is satisfied, and the conclusion follows {from the choice of $\hat T$ and Lemma \ref{Lemma:SAARC-T3}}.

}	
\end{proof}

\section{Worst-Case Iteration Complexity}\label{Section:worst-case}
In this section, we {consider the case where the accuracy requirement of the sub-sampled Hessian is not satisfied, and} assume that each component function $f_i$ in $f$ of problem \eqref{Prob:main} is convex. For any $H(\x)$ constructed in Algorithm \ref{Alg:SSAS} and Algorithm \ref{Alg:ASAS}, {noting that $\epsilon_0$ is the upper bound of the tolerance of all Hessian approximations in the algorithms}, it holds that
\begin{equation}\label{Property: matr-H}
H(\x) \succeq 0 \quad \mbox{and}\quad {\| H(\x)\| \le \frac{1}{n|\SCal|}\sum_{j\in\SCal} \frac{1}{p_j} \| \nabla^2 f_j(\x_i) \| + \epsilon_0 \overset{\eqref{Bounded-Hessian}}\le  L + \epsilon_0 }. 
\end{equation}
{In the following, to highlight the flow of our analysis, we shall present the lemmas key to our analysis  but relegate their proofs to the appendix.}
\begin{lemma}\label{Lemma:W-AARC-T1} Suppose $\|\nabla f(\x_i)\|^2 > {\epsilon}$ in each iteration $i$ of Algorithm \ref{Alg:SSAS}. Denoting
\begin{equation*}
\bar{\sigma}^W_1=\max\left\{\sigma_0, \ \frac{3\gamma_2 L(4L + \epsilon_0)}{(1-\kappa_\theta)\sqrt{\epsilon}} \right\} > 0,
\end{equation*}
we have
\begin{equation*}
T_1 \le \left\lceil 1+\frac{2}{\log\left(\gamma_1\right)}\log\left(\frac{\bar{\sigma}^W_1}{\sigma_{\min}}\right) \right\rceil.
\end{equation*}
\end{lemma}
\begin{lemma}\label{Lemma:W-AARC-T2}
Suppose $ \| \nabla f(\x_j) \|^2 > {\epsilon}$ in each iteration $j$ of Algorithm \ref{Alg:ASAS}. Denoting
{\small \begin{equation}\label{eqn:bar-sigma2-W}
\bar{\sigma}_2^W = \max \left\{\bar{\sigma}_1^W, \gamma_2 \frac{(3L+2\epsilon_0)(2L+\epsilon_0)+2\sqrt{\epsilon}(1-\kappa_\theta)(\kappa_\theta+\eta)+(2L+\epsilon_0)\sqrt{(3L+2\epsilon_0)^2 +\sqrt{\epsilon}(1-\kappa_\theta)(\kappa_\theta+\eta)}}{2 \sqrt{\epsilon}(1-\kappa_\theta)} \right\}.
\end{equation}}
we have
\begin{equation*}
T_2 \leq \left\lceil 1+ \frac{2}{\log\left(\gamma_1\right)} \log \left(\frac{\bar{\sigma}_2^ {W}}{\sigma_{\min}}\right)\right\rceil |\SucCal|.
\end{equation*}
\end{lemma}
Now we are ready to estimate an upper bound of $T_3$: the total counts of successfully updating $\varsigma>0$.
\begin{lemma}\label{Lemma:W-AARC-T3} 
Suppose in each iteration $j$ of Algorithm \ref{Alg:ASAS}, we have $\|\nabla f(\x_j)\|^2 > {\epsilon}$ for all $0 \le j \le T_2$. Then inequality \eqref{Inequality:induction} holds if
\begin{equation}\label{bound-varsigma-W}
\varsigma_l \geq \bar \varsigma^W :=  \frac{8}{\eta^2}\left((2L+\epsilon_0)\cdot\frac{(L+\epsilon_0) +\sqrt{(L+\epsilon_0)^2+4{\bar\sigma}^W_{2}\sqrt{\epsilon}(1-\kappa_\theta)}}{2\sqrt{\epsilon}(1-\kappa_\theta)}+\bar{\sigma}^W_2 + \kappa_\theta\right)^3,
\end{equation}
{where $\bar{\sigma}^W_2$ is defined in \eqref{eqn:bar-sigma2-W}, and it} further implies that
\begin{equation*}
T_3 \leq \left\lceil \frac{1}{\log\left(\gamma_3\right)}\log \left[\frac{8}{\eta^2\varsigma_0}\left((2L+\epsilon_0) \cdot \frac{(L+\epsilon_0) +\sqrt{(L+\epsilon_0)^2+4{\bar \sigma}^W_{2}\sqrt{\epsilon}(1-\kappa_\theta)}}{2\sqrt{\epsilon}(1-\kappa_\theta)} + \bar{\sigma}^W_2 + \kappa_\theta \right)^3\right] + 1 \right\rceil
\end{equation*}
\end{lemma}
In the rest of this section, we refer the combined number of iterations of the two subroutines (Algorithm \ref{Alg:SSAS} and Algorithm \ref{Alg:ASAS}) as the iteration count for Algorithm \ref{Algorithm:SAARC}.
\begin{theorem}\label{Theorem:W-AARC}
Suppose that every component function $f_i$ in $f$ of problem \eqref{Prob:main} is convex and in each iteration $i$ of Algorithm \ref{Algorithm:SAARC}, we have $\|\nabla f(\x_j)\|^2 > {\epsilon}$ for all $j$. Then the sequence $\{\bar{\x}_l, \ l=0,1,\ldots\}$ generated by Algorithm \ref{Alg:ASAS} satisfies
\begin{eqnarray*}
& & \frac{l(l+1)(l+2)}{6}f(\bar{\x}_l) \leq \psi_l(\z_l) \leq \psi_l(\z) \\
& \leq & \frac{l(l+1)(l+2)}{6}f(\z) + {8 \kappa_\theta D^3} + \frac{L +\epsilon_0}{2}\|\z-\x_{0}\|^2 + \frac{\bar\sigma^W_{1}}{3}\|\z-\x_{0}\|^3 + \frac{\varsigma_l}{6}\|\z-\bar{\x}_0\|^3.
\end{eqnarray*}
\end{theorem}
\begin{proof} The proof is almost identical to that of Theorem \ref{Theorem:SAARC} (which is based on mathematical induction) except the following estimation on $m(\bar \s^m_{0}, \x_{0}, \sigma_0^{ASAS})$, where $\bar \s^m_{0}$ is a global minimizer of $m(\s, \x_{0}, \sigma_0^{ASAS})$ over $\br^d$:
\begin{eqnarray*}
m(\bar \s^m_{0}, \x_{0}, \sigma_0^{ASAS})
& = & f(\x_{0}) + (\bar \s^m_{0})^\top \nabla f(\x_{0}) + \frac{1}{2} (\bar \s^m_{0})^\top H(\x_{0}) \bar \s^m_{0} + \frac{1}{3}\sigma_0^{ASAS}\|\bar \s^m_{0}\|^3 \\
& { \eqref{Property: matr-H}\above 0pt  \leq } & f(\x_{0}) + (\z-\x_{0})^\top \nabla f(\x_{0}) + \frac{1}{2} (L+\epsilon_0) \left\|\z-\x_{0}\right\|^2  + \frac{\sigma_0^{ASAS}}{3}\|\z-\x_{0}\|^3  \\
& \leq & f(\z) + \frac{ L +\epsilon_0}{2}\|\z-\x_{0}\|^2 + \frac{{\bar \sigma}^W_1}{3}\|\z-\x_{0}\|^3 ,
\end{eqnarray*}	
where the second inequality is due to the convexity of $f$. Then, by replacing the estimation of $m(\bar \s^m_{0}, \x_{0}, \sigma_0^{ASAS})$ with the inequality above, the conclusion readily follows.
\end{proof}
After establishing Theorem~\ref{Theorem:W-AARC} and denoting
{\footnotesize \begin{equation}\label{eqn:bar-sigma-W}
\bar{\sigma}^W := \max\left\{\sigma_0, \frac{3\gamma_2 L(4L + \epsilon_0)}{(1-\kappa_\theta)}, \gamma_2\frac{(3L+2\epsilon_0)(2L+\epsilon_0)+2(1-\kappa_\theta)(\kappa_\theta+\eta)+(2L+\epsilon_0)\sqrt{(3L+2\epsilon_0)^2+(1-\kappa_\theta)(\kappa_\theta+\eta)}}{2 (1-\kappa_\theta)}\right\},
\end{equation}}
the iteration complexity of Algorithm \ref{Algorithm:SAARC} readily follows.
\begin{theorem}\label{Thm:W-AARC-Main}
Suppose every component function $f_i$ in $f$ of problem \eqref{Prob:main} {is convex}, and let $0<\epsilon<1$ sufficiently small. The Algorithm~\ref{Algorithm:SAARC} returns a solution $\x_{out}$ such that either $ \| \nabla f(\x_{out}) \|^2 \le {\epsilon}$ or $f(\x_{out}) -f^* \le \epsilon$, at an iteration no more than 
\begin{eqnarray*}
T & \leq & \left\lceil 1 + \frac{2}{\log(\gamma^W_1)}\log \left(\frac{\bar{\sigma}^W}{\sigma_{\min}} \epsilon^{-\frac{1}{2}} \right) \right\rceil + \left\lceil 1 + \frac{2}{\log(\gamma_1)}\log\left(\frac{\bar{\sigma}^W}{\sigma_{\min}}\epsilon^{-\frac{1}{2}}\right)\right\rceil \left\lceil (C^W)^{\frac{1}{3}}\cdot\epsilon^{-\frac{5}{6}} \right\rceil \\
& & + \left\lceil \frac{1}{\log(\gamma_3)}\log\left[\left((2L+\epsilon_0) \cdot \frac{(L+\epsilon_0) +\sqrt{(L+\epsilon_0)^2+4{\bar \sigma}^W(1-\kappa_\theta)}}{2(1-\kappa_\theta)}  + \bar{\sigma}^W + \kappa_\theta \right)^{3} \frac{8 D^3}{\eta^2\varsigma_0}\epsilon^{-\frac{3}{2}} \right]+1 \right\rceil\\
& = & \OCal(\epsilon^{-5/6}\log(\epsilon^{-1})),
\end{eqnarray*}
where
\begin{equation}\label{constant:W-C}\small
C^W = {12\kappa_\theta D^3} + 3(L +\epsilon_0)D^2 + 2\bar\sigma^W D^3 + \frac{8D^3}{\eta^2} \left((2L+\epsilon_0) \cdot \frac{(L+\epsilon_0) +\sqrt{(L+\epsilon_0)^2+4{\bar \sigma}^W(1-\kappa_\theta)}}{2(1-\kappa_\theta)}  + \bar{\sigma}^W + \kappa_\theta \right)^{3}
\end{equation}	
{and $\bar{\sigma}^W$ is defined in \eqref{eqn:bar-sigma-W}.}	
\end{theorem}
\begin{proof} Suppose Algorithm~\ref{Algorithm:SAARC}  does not stop early, i.e.,  we have $\|\nabla f(\x_j)\|^2 > \epsilon$ in every iteration $j$. Recall that $l= 0,1,\ldots$ is the count of successful iterations in Algorithm \ref{Alg:ASAS}. Applying the inequality in Theorem~\ref{Theorem:W-AARC} with $\z=\x^\star$ we have
\begin{equation*}
\frac{l(l+1)(l+2)}{6}(f(\bar{\x}_l) - f(\x^\star)) \leq {8\kappa_\theta D^3} + \left(\frac{L +\epsilon_0}{2}\right)D^2 + \frac{\bar\sigma^W_{1}}{3}D^3 + \frac{\varsigma_l}{6}D^3
\end{equation*}
Note that $\varsigma_l \leq \gamma_3 \bar \varsigma^W$ and { $\bar \varsigma^W$ has the magnitude of $\epsilon^{-\frac{3}{2}}$ in \eqref{bound-varsigma-W}. In addition,
$\bar \sigma^W_{1}$ that is defined in Lemma \ref{Lemma:W-AARC-T1}, is also dependent on $\epsilon$.} { The above inequality implies that
\begin{equation}{\label{W-l3-epsilon}}
f(\bar{\x}_l)-f(\x^\star) \leq \frac{C^W}{l(l+1)(l+2)}\cdot\epsilon^{-\frac{3}{2}} < \frac{C^W}{l^3}\cdot\epsilon^{-\frac{3}{2}} 
\end{equation}
with $C^W$ defined in \eqref{constant:W-C}. 
 Recall that $\SucCal$ is  the index set of all successful iterations in Algorithm \ref{Alg:ASAS}.
Then $|\SucCal| = l < \left(\frac{C^W}{\epsilon^{5/2}}\right)^{1/3}$ whenever $f(\bar{x}_{l}) - f^* \ge \epsilon$. Therefore, by choosing $T_2 = \left\lceil 1 + \frac{2}{\log(\gamma_1)}\log\left(\frac{\bar{\sigma}^W_2}{\sigma_{\min}}\right)\right\rceil\left\lceil\left(\frac{C^W}{\epsilon^{5/2}}\right)^{1/3}\right\rceil$
and Lemma \ref{Lemma:W-AARC-T2}, we must have $l = |\SucCal| \ge \left\lceil\left(\frac{C^W}{\epsilon^{5/2}}\right)^{1/3}\right\rceil $, which further implies $f(\bar{x}_{l}) - f^* < \epsilon$. Finally, the upper bound on $T$ follows by combining this result with lemmas \ref{Lemma:W-AARC-T1} and \ref{Lemma:W-AARC-T3}.}
\end{proof}

To conclude this section, we remark that if we adopt a stronger early stop criterion of $\|\nabla f(\x_{j+1}) \| \le \epsilon$ in Algorithm~\ref{Algorithm:SAARC}, then the iteration bound in Theorem \ref{Thm:W-AARC-Main} will change to $\OCal(\epsilon^{-4/3}\log(\epsilon^{-1/2}))$. This is because in this case, the factor $\sqrt{\epsilon}$ in the bound of $\bar \varsigma^W$ from \eqref{bound-varsigma-W} is replaced by $\epsilon$. As a result, the quantity $\frac{C^W}{l^3}\cdot\epsilon^{-\frac{3}{2}} $ in \eqref{W-l3-epsilon} is adapted to $\frac{C^W}{l^3}\cdot\epsilon^{-{3}} $.
{Then we can let $T_2 = \left\lceil 1 + \frac{2}{\log(\gamma_1)}\log\left(\frac{\bar{\sigma}^W_2}{\sigma_{\min}}\right)\right\rceil\left\lceil\left(\frac{C^W}{\epsilon^{4}}\right)^{1/3}\right\rceil$ and guarantee $l \ge \lceil (C^W)^{\frac{1}{3}} \epsilon^{-\frac{4}{3}}\rceil$ by Lemma \ref{Lemma:W-AARC-T1}, which further implies $f(\bar{x}_{l}) - f^* < \epsilon$.  The iteration bound of  $\OCal(\epsilon^{-4/3}\log(\epsilon^{-1/2}))$ follows from this result, lemmas \ref{Lemma:W-AARC-T1} and \ref{Lemma:W-AARC-T3}.}

\section{Numerical Experiments}\label{Section5:Experiment}
We shall demonstrate the efficacy of the proposed method by presenting some computational results on different genres of real data. Experimental results on regularized logistic regression confirm that our algorithm is suitable for solving large-scale statistical learning problems and at least competitive with other algorithms. In addition, all { eight} data sets are selected from the LIBSVM collection\footnote[1]{The LIBSVM collection is available at https://www.csie.ntu.edu.tw/$\sim$cjlin/libsvmtools/datasets} in which their statistics are summarized in Table~\ref{Tab:Dataset-Stats}, and all algorithms are implemented using Python 3.5 on a MacBook Pro running with Mac OS High Sierra 10.13.6 and 16GB memory.
\begin{table}[t]
\centering
\caption{The Statistics of Eight LIBSVM Datasets}\label{Tab:Dataset-Stats}
\begin{tabular}{|r|r|r|c|} \hline
Name & Instances No. & Features No. & Processing  \\ \hline
\textsf{SUSY} & 5,000,000 & 18 & Done by \cite{Baldi-2014-Searching} \\
\textsf{covtype} & 581,012 & 54 & Transformed from multiclass by~\cite{Collobert-2002-Parallel}. \\
\textsf{phishing} & 11,055 & 68 & Binary encoding and length-normalized \\
\textsf{w8a} & 49,749 & 300 & Rescaled to a unit vector \\
\textsf{gisette} & 7,000 & 5,000 & Feature-wisely rescaled within $[-1, 1]$ \\
\textsf{rcv1} & 20,242 & 47,236 & Only training data used \\
\textsf{real-sim} & 72,309 & 20,958 & Vikas Sindhwani for the SVMlin project \\ \hline
\end{tabular}
\end{table}
\begin{figure*}[!ht]
\centering
\includegraphics[width=0.40\textwidth]{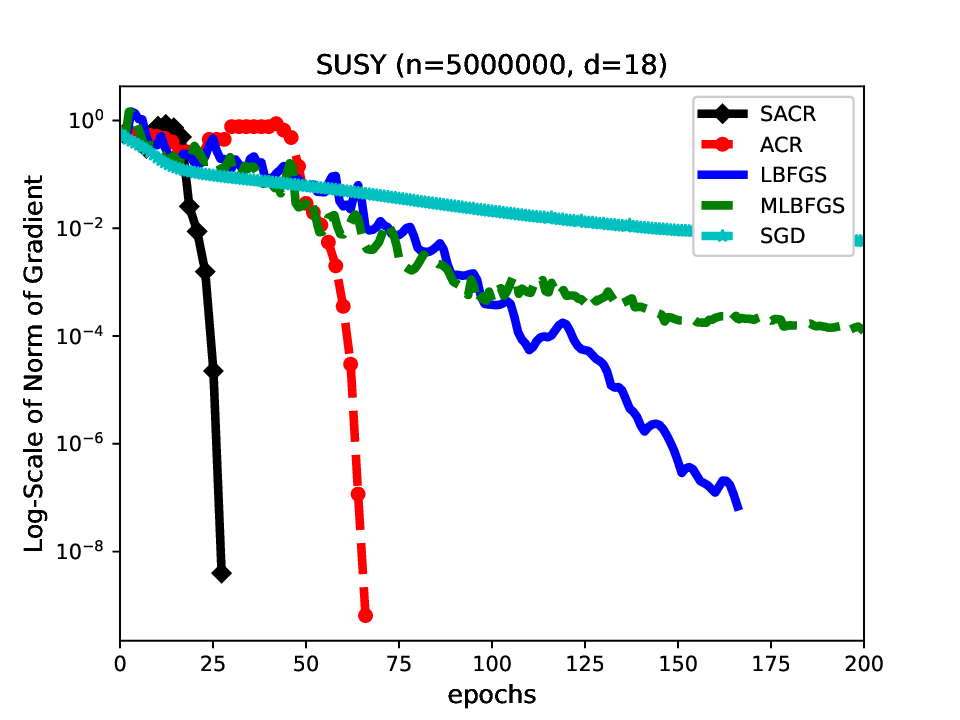}
\includegraphics[width=0.40\textwidth]{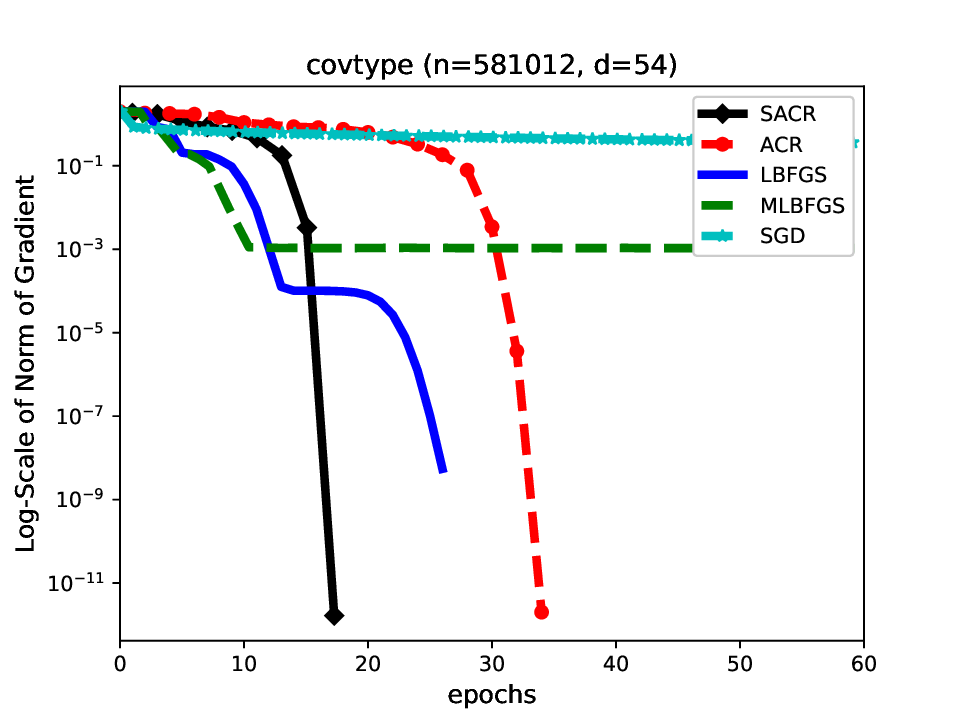}
\includegraphics[width=0.40\textwidth]{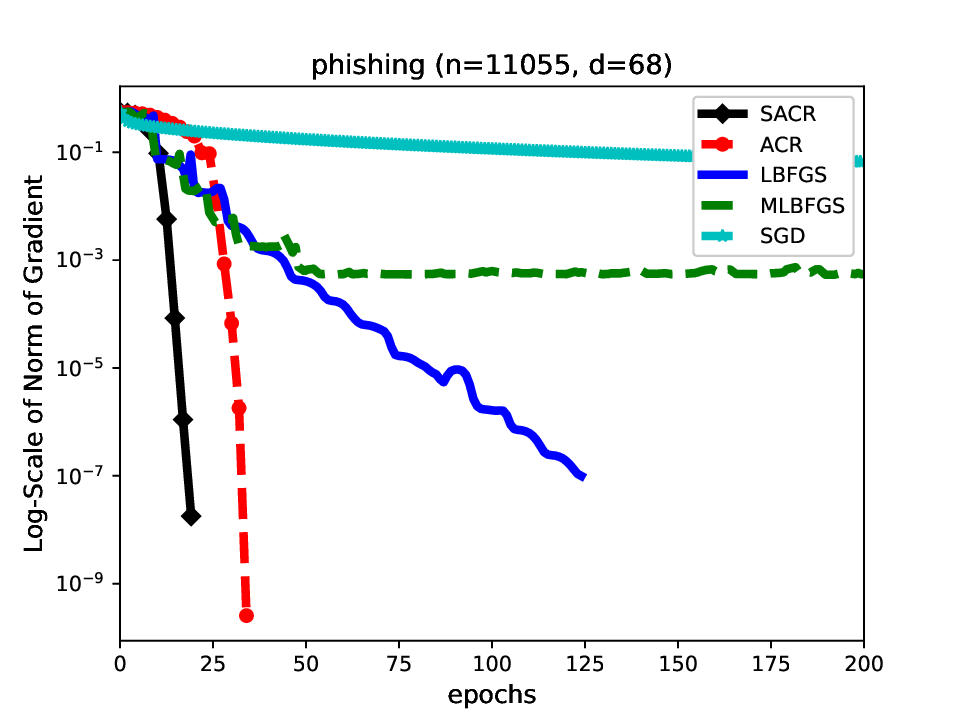}
\includegraphics[width=0.40\textwidth]{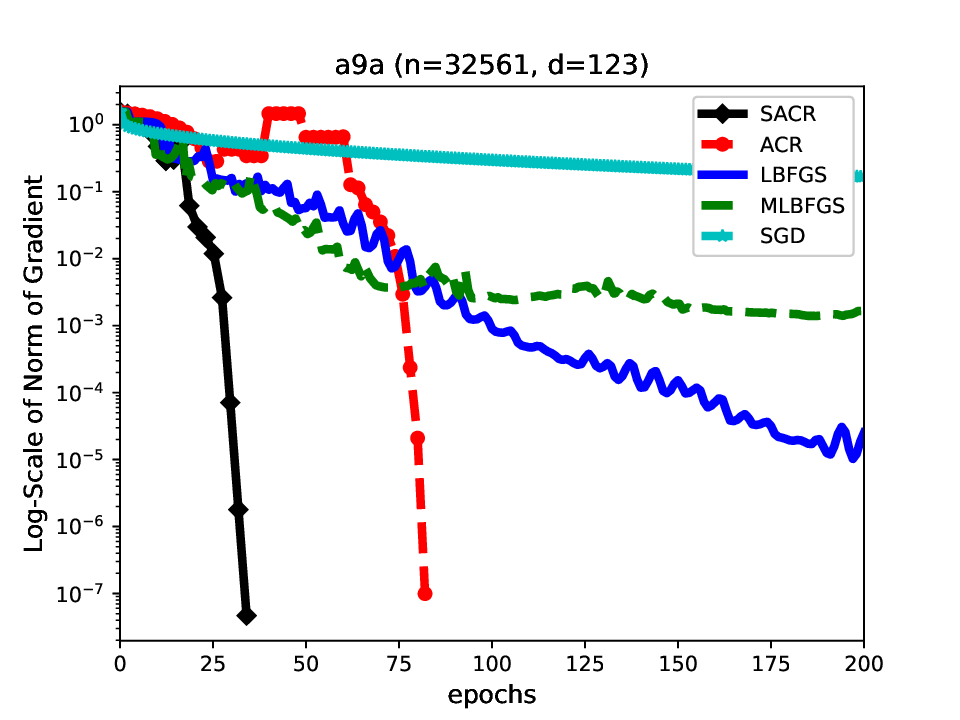}
\includegraphics[width=0.40\textwidth]{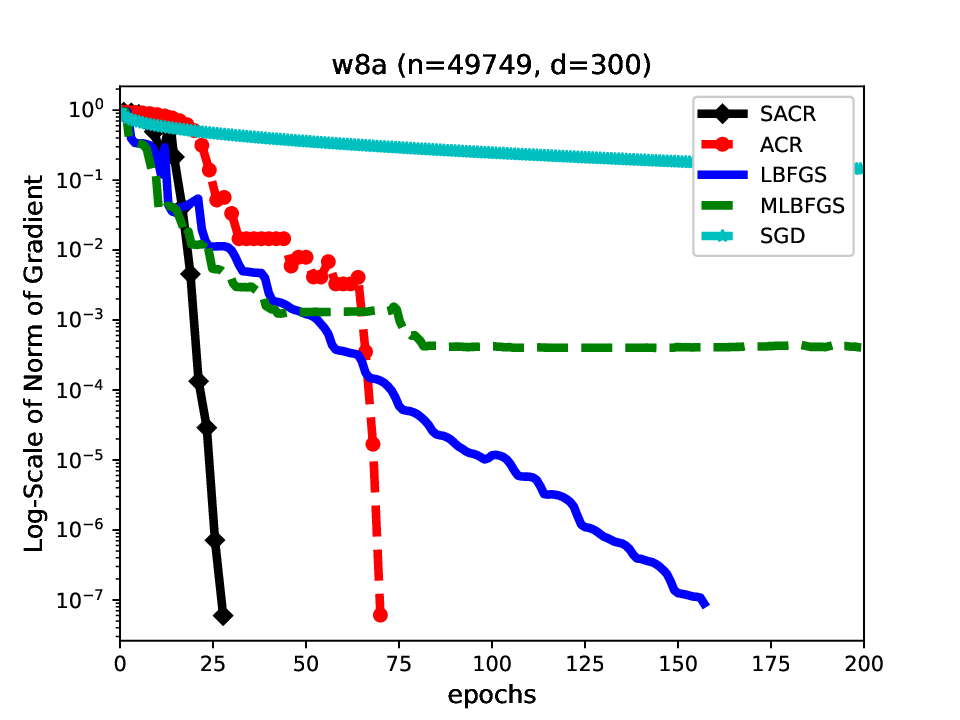}
\includegraphics[width=0.40\textwidth]{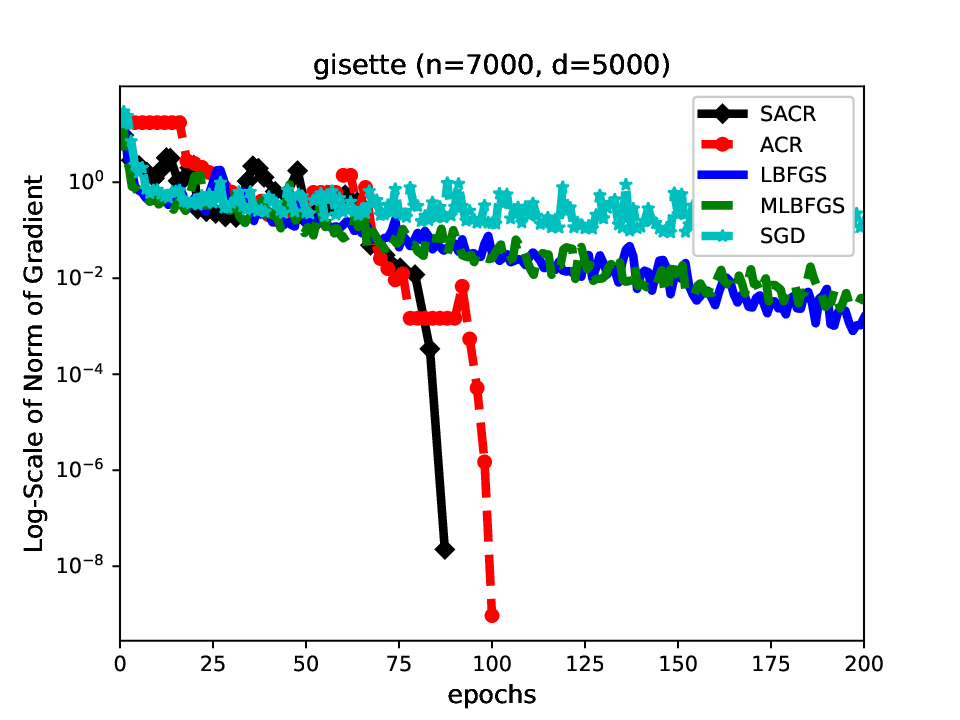}
\includegraphics[width=0.40\textwidth]{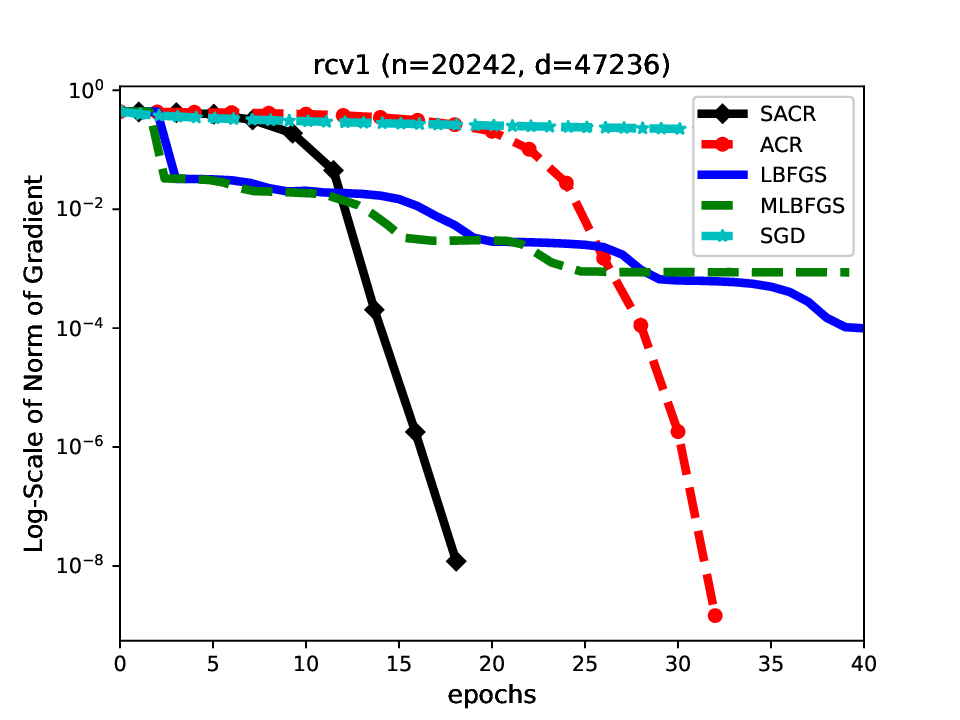}
\includegraphics[width=0.40\textwidth]{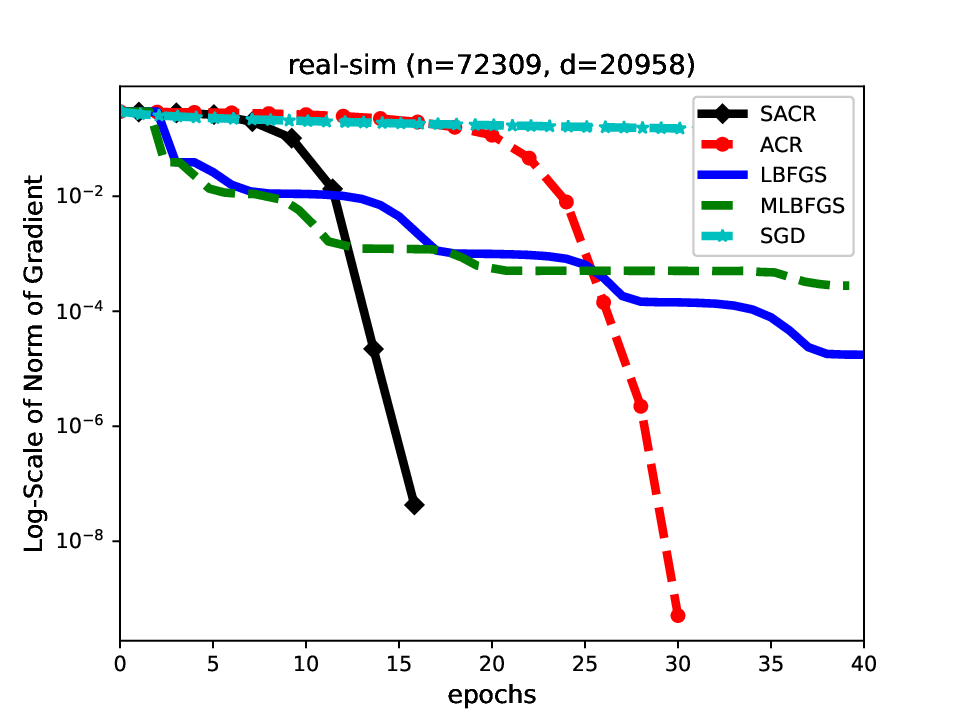}
\caption{Performance of our algorithm and four state-of-the-art {algorithms without sub-sampled Hessian information} on eight datasets with the log-scale of the norm of gradient vs.\ number of epochs. }\label{fig:result-low-epoch}
\end{figure*}
\paragraph{Problem.} Given a collection of data samples $\{\left(\w_i, y_i\right)\}_{i=1}^n$ in which $y_i \in \{-1, 1\}$, the model of regularized logistic regression is given by
\begin{equation}\label{prob:logistic-regression}
\min_{\x\in\br^d} \ \frac{1}{n}\sum_{i=1}^n \log\left(1+e^{-y_i \w_i^\top\x}\right) + \left(\frac{\lambda}{2}\right)\left\|\x\right\|_2.
\end{equation}
where the regularization term $\left\|\cdot\right\|_2$ promotes smoothness and $\lambda>0$ balances smoothness with goodness-of-fit and generalization and is chosen by five-fold cross validation.
\begin{figure*}[!ht]
\centering
\includegraphics[width=0.40\textwidth]{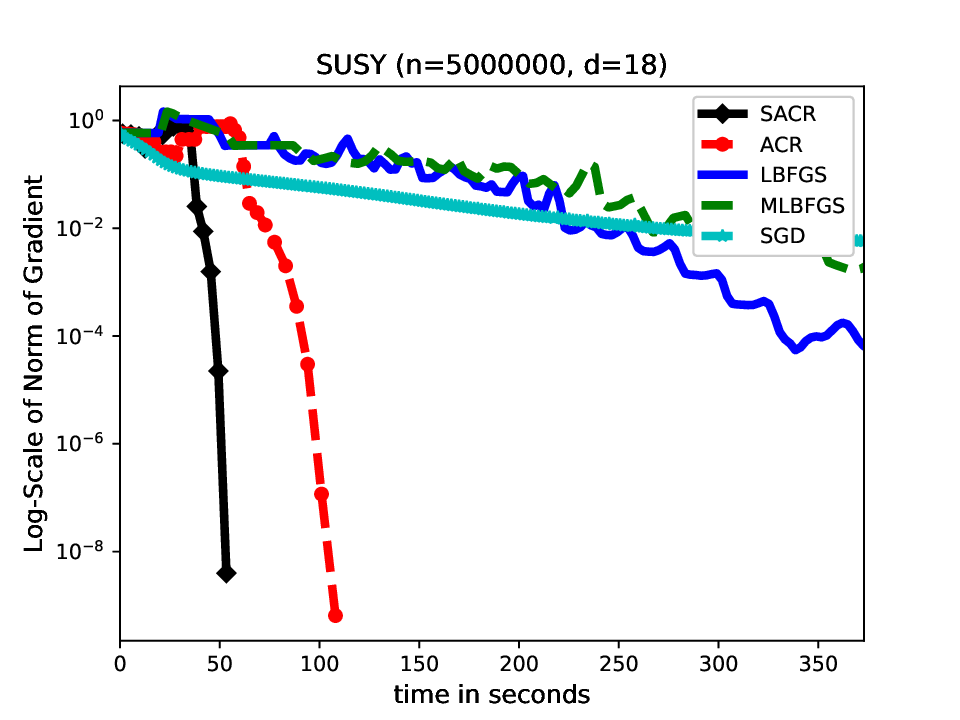}
\includegraphics[width=0.40\textwidth]{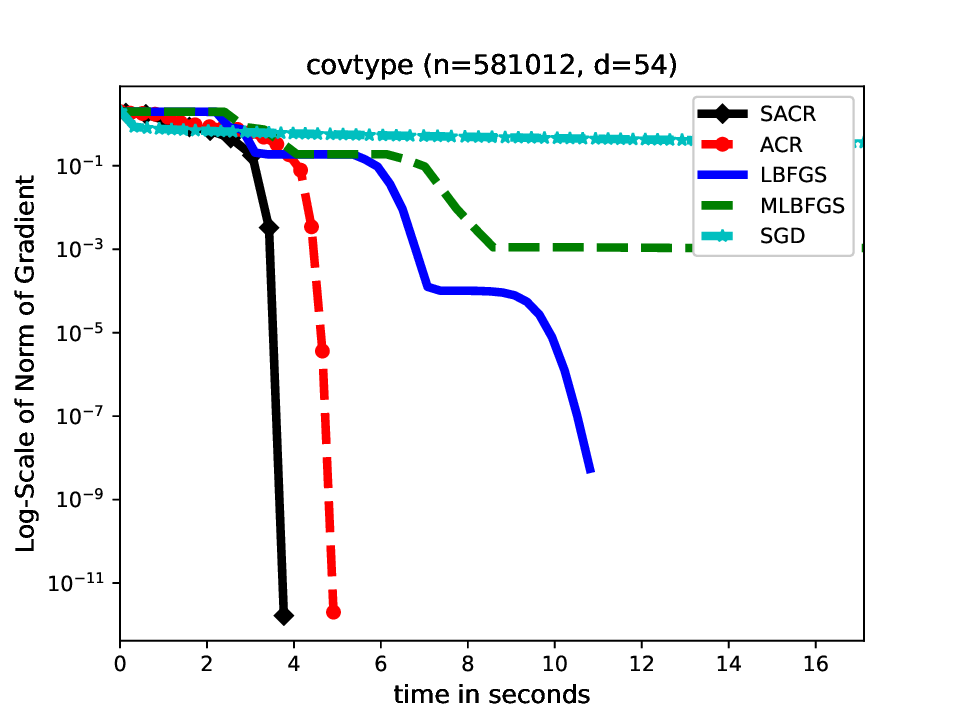}
\includegraphics[width=0.40\textwidth]{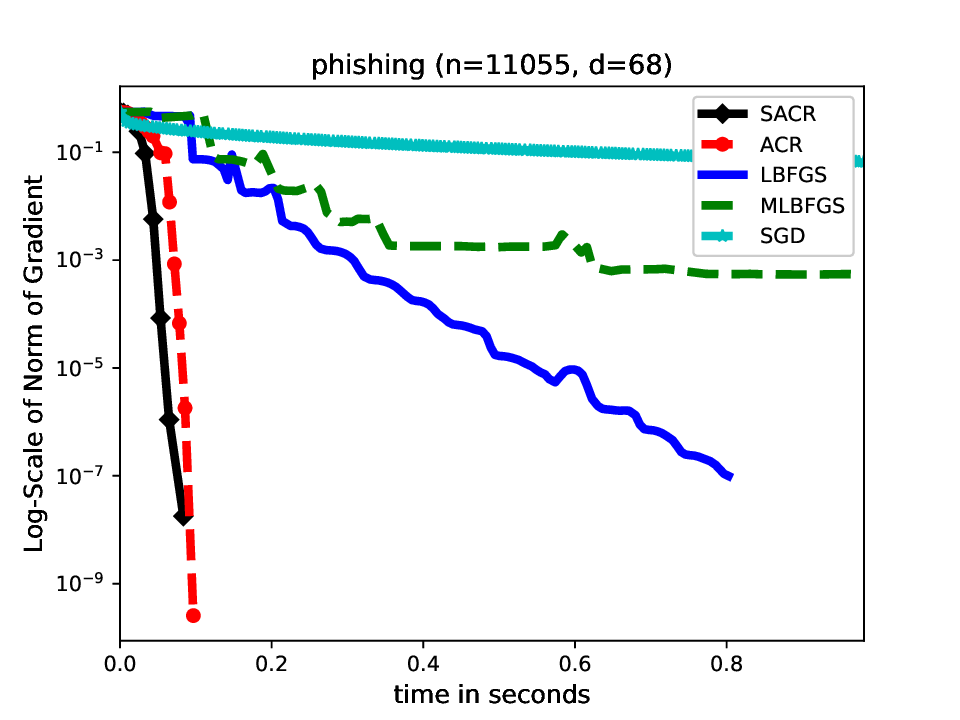}
\includegraphics[width=0.40\textwidth]{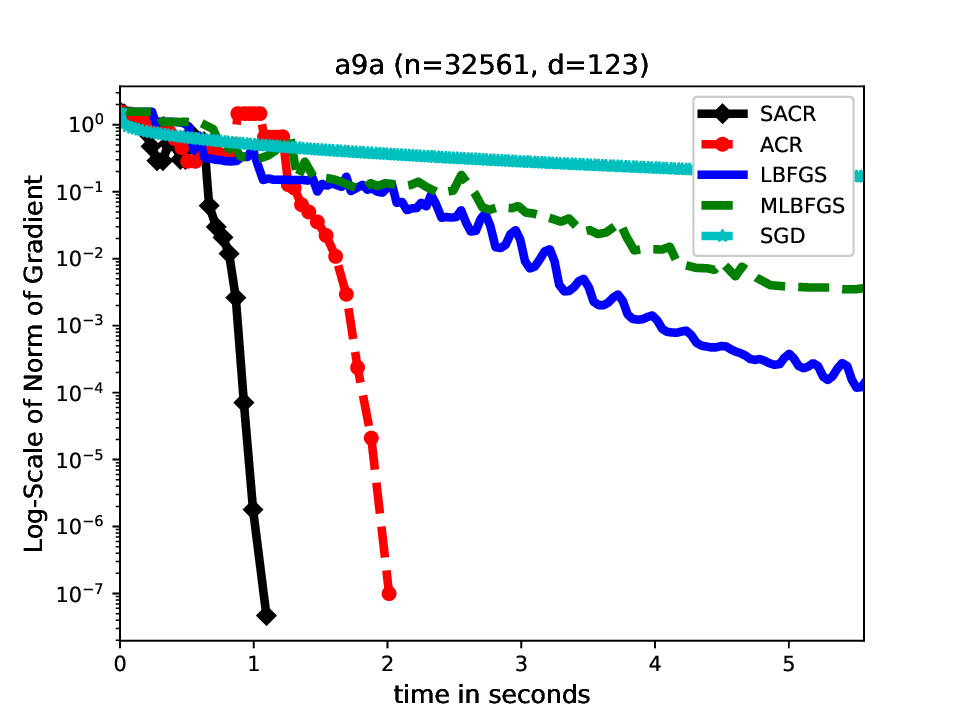}
\includegraphics[width=0.40\textwidth]{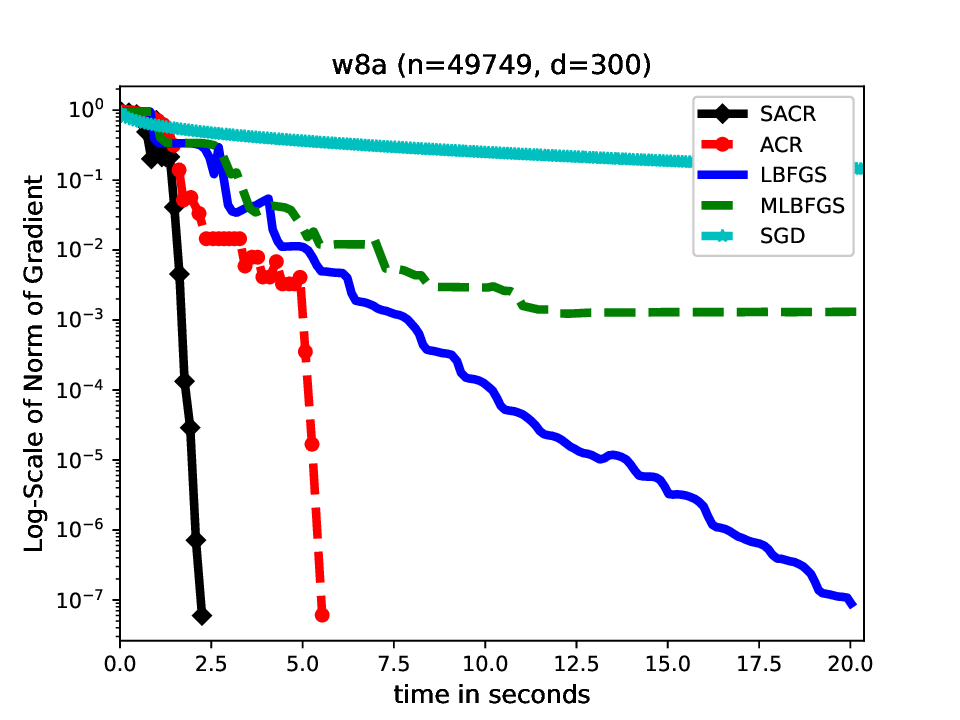}
\includegraphics[width=0.40\textwidth]{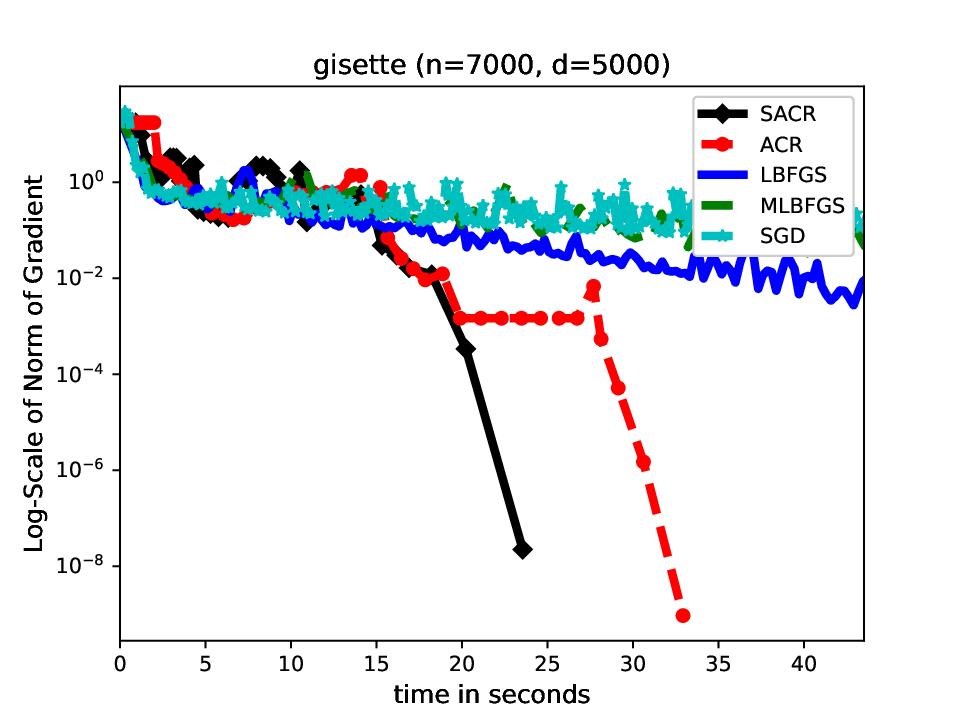}
\includegraphics[width=0.40\textwidth]{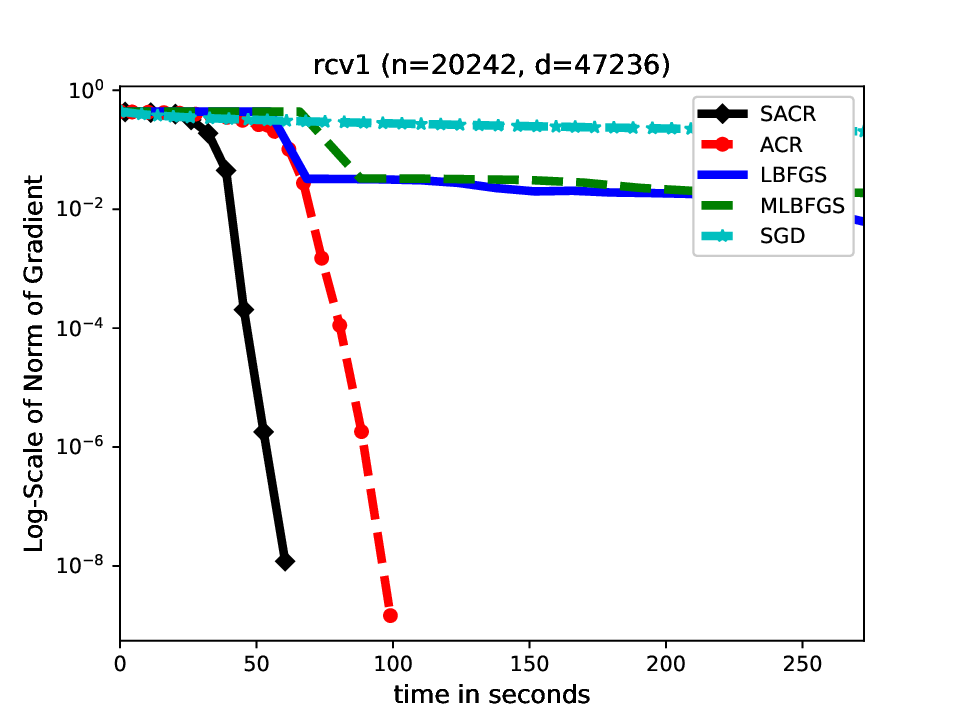}
\includegraphics[width=0.40\textwidth]{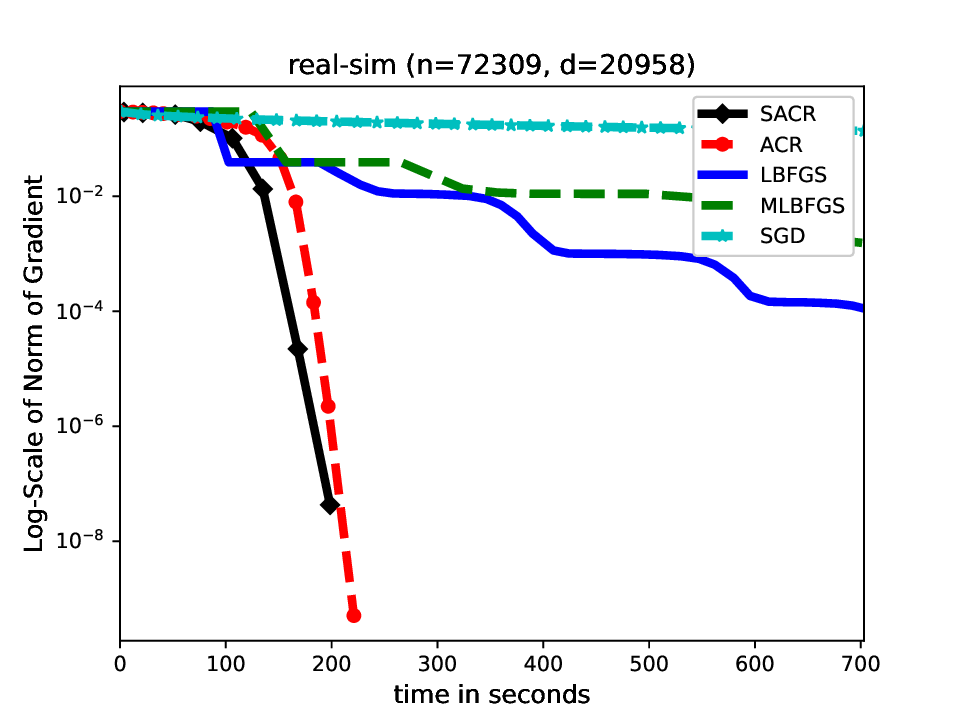}
\caption{Performance of our algorithm and four state-of-the-art {algorithms without sub-sampled Hessian information} on eight datasets with the log-scale of the norm of gradient vs.\ time. }\label{fig:result-low-time}
\end{figure*}
\paragraph{Experimental setting.} We implement Algorithm~\ref{Algorithm:SAARC} with $\eta = 0.1$, {$\gamma_1 =\gamma_2 = \gamma_3=2$}, $\sigma_{\min} = 10^{-16}$, $\sigma_0 = 1$ and $\kappa_\theta = 0.1$, denoted as {SACR}, in a hybrid manner. Specifically, given that {SACR} contains two phases we implement {SACR} with these two phases at the beginning and stop the second phase when the iterate is relatively close to the optimal solution, and then switch to subsampled cubic regularization {(SCR)} method. This is because that we observe that the first phase mainly contributes to the local convergence of {SACR} while the second phase may hurt it. In fact, when the iterate is close enough {to an optimal solution}, the first phase reduces to the Newton method, hence admitting a local quadratic convergence rate. In our experiment, we stop the second phase when $|f(\x_{i+1})-f(\x_i)|/|f(\x_i)| \leq 10^{-1}$ and the final stopping criterion as $\|\nabla f(x)\| \leq 10^{-7}$.

Furthermore, since {the accelerated convergence of our algorithm is global, to observe the effect of acceleration,} we need to set the initial solution far {away} from the local convergence region. In this case, we randomly generate the starting point from a Gaussian random variable with zero mean and a large variance. {The sample size is chosen inversely proportional to the square norm of the gradient (cf.\ Lemmas~\ref{Lemma:Uniform-Sample} and \ref{Lemma:NonUniform-Sample}) {with proportional ratio $0.2\log(100d)$} and are bounded both below and above by some constants. The lower bound and the upper bound are tuned for different datasets. In addition, we require the lower bound of the sample size decreases to a smaller value after we switching SACR to SCR in the local region of the optimal solution. 
} 

Finally, we use the log-scale of norm of gradient as the function of number of epochs and run time as the metric in our experiment. In particular, one epoch is counted when a full batch size (i.e.\ $n$ times) of the gradient or Hessian of the component functions is queried. Since the sample size in sub-sampling algorithms is less than $n$, one epoch is likely to be consumed by the queries from several iterations.

\paragraph{Subproblem solving.} The generalized conjugate gradient method with Lanczos process is applied to approximately solve the cubic regularized subproblem. More specifically, the convex cubic polynomial in the subproblem is minimized over a Krylov subspace, which is defined by the gradient and Hessian of $f$ at $\x_i$ and given by
\begin{equation*}
\KCal := \textsf{Span}\left\{\nabla f(\x_i), \ \nabla^2 f(\x_i) \nabla f(\x_i), \ \left( \nabla^2f(\x_i) \right)^2 \nabla f(\x_i), \  \ldots\right\},
\end{equation*}
Note that the Krylov subspace $\KCal$ gradually swells with very cheap computational cost since each orthogonal basis is created by performing a single matrix-vector product. Furthermore, the minimization of cubic polynomial over the Krylov subspace only requires factorizing a tri-diagonal matrix at the $O(d)$ expense. Finally, we set the stopping criterion for subproblem solving as~\eqref{Eqn:Approx_Subprob_SAARC} with $\kappa_\theta = 0.1$.

\subsection{Comparision with four state-of-the-art {algorithms without sub-sampled Hessian information}}
In the first experiment, we compare our algorithm to four baseline algorithms, including the deterministic counterpart of our algorithm~\cite{Jiang-2020-Unified}, denoted as ACR, the limited memory BFGS method, denoted as LBFGS, the minibatch variant of LBFGS with the batch size $n/2$, denoted as MLBFGS, and the minibatch variant of stochastic gradient descent with the batch size $n/10$, denoted as SGD. {Note that only the gradient is sampled in MLBFGS and SGD, while the Hessian instead of the gradient is subsampled in our algorithm.} For LBFGS and MLBFGS implementations, we set an initial matrix as identity matrix and the line search criterion with strong Wolfe condition. The ratio for measuring the progress is set as $0.9$, the maximum number of line search is set as $5$ and the memory size is set as $30$. Additionally, we excluded the sub-sampled Newton method since its global convergence is unknown in general and, when the iterate is close enough, our algorithm turns out to be the same as {the} sub-sampled Newton method since the cubic regularization term will become very small. 

The results on eight datasets are presented in Figures~\ref{fig:result-low-epoch}-\ref{fig:result-low-time}. We observe that SACR outperforms other algorithms in most of the datasets despite the competitive performance of other algorithms at the initial stage. In particular, both SACR and ACR can attain the solution with high accuracy while LBFGS, MLBFGS and SGD can not. {We observe the curve of LBFGS lies between that of SGD and ACR, and it behaves more like ACR for low-dimensional dataset (i.e., SUSY and covtype). This is probably due 
	to that LBFGS can be viewed as an interplay between the first order and the second order method, and it exhibits superlinear convergence thanks to certain geometrical regularity (e.g., restricted strongly convexity) that intuitively exists for low-dimensional problem with high probability in real applications.}
Also, SACR is more efficient than {ACR} due to the usage of sub-sampling techniques. When the dimension of the dataset becomes larger, standard second-order methods suffer from the storing and computing the inverse of the Hessian as the dimension increases, while our algorithm remains efficient in most of these datasets. This is not surprising since the subproblem solving depends on the generalized conjugate gradient method. For high-dimensional problems, storing the Hessian appears to be a critical issue which requires further exploration. To this end, the competitive performance demonstrates that our algorithm has a great potential to achieve practical performance on the large-scale problems.

{\subsection{Comparision with three types of sub-sampled cubic regularized algorithms}
In the second experiment, we compare our algorithm to three types of sub-sampled cubic regularized algorithms including the non-accelerated sub-sampled cubic regularized (SCR) algorithm that is used in the phase I of SACR, a variant of SCR in \citep{Kohler-2017-SubSample} denoted as SCR-KL, a dynamic inexact Hessian variant of SCR in \citep{Bellavia-2021-Finite-Sum} denoted as SCR-BGM. In the implementation, the the sample size for the approximation in SCR-KL is determined by the previous stepsize instead of the current stepsize used in the theory \citep{Kohler-2017-SubSample} {with an adaptive rule\footnote{\textsc{https://github.com/dalab/subsampled{\_}cubic{\_}regularization}}}. While the parameters of SCR-BGM strictly follows the setting for testing the real datasets in \citep{Bellavia-2021-Finite-Sum}. In addition,
we impose a lower bound as well as an upper bound of the sample size for all the algorithms, and these bounds are tuned for different datasets. In particular, {the lower bound is uniformly set to be $0.01\cdot n$ for all datasets, while the upper bound is set to be $0.2\cdot n$ for $7$ datasets except the full batch size $n$ is used for the dataset ``gisette''.}

We provide the corresponding numerical results on eight datasets in Figures~\ref{fig:subsample-alg-result-low-epoch}, where the log-scale of the norm of gradient vs.\ number of epochs is provided. We can see that SACR outperforms other sub-sampled cubic regularized algorithms in all the datasets, while all the tested algorithms have similar convergence behavior after the iteration points  entering the local region of the optimal solution. This indeed indicates that our  technique really accelerates the algorithm in 
 finding such local region and yields a faster convergence rate.  

\begin{figure*}[!ht]
	\centering
	\includegraphics[width=0.40\textwidth]{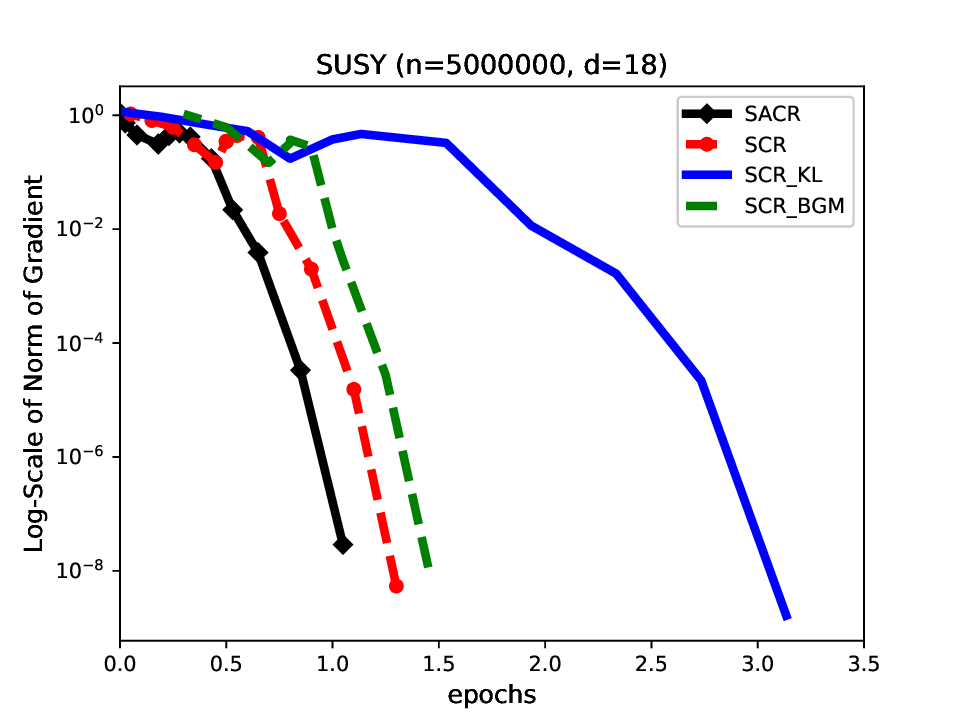}
	\includegraphics[width=0.40\textwidth]{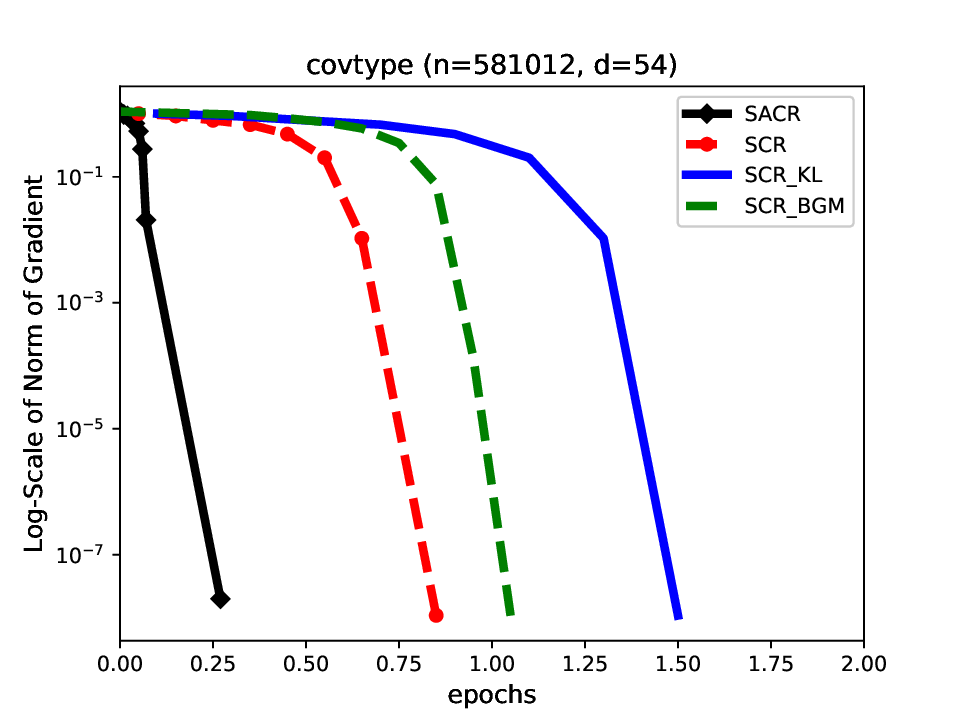}
	\includegraphics[width=0.40\textwidth]{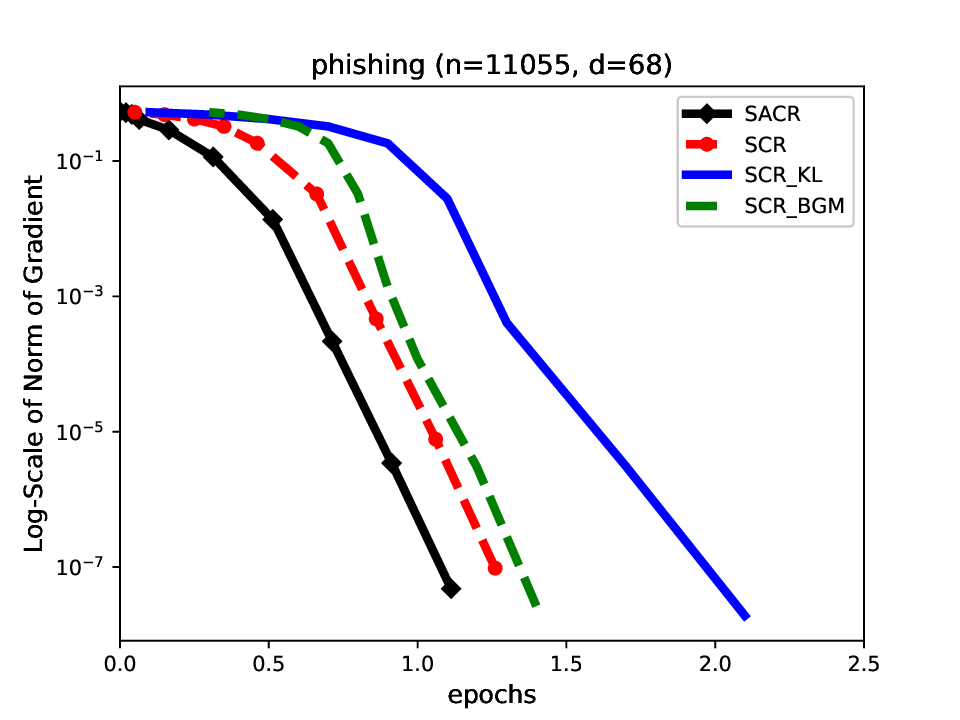}
	\includegraphics[width=0.40\textwidth]{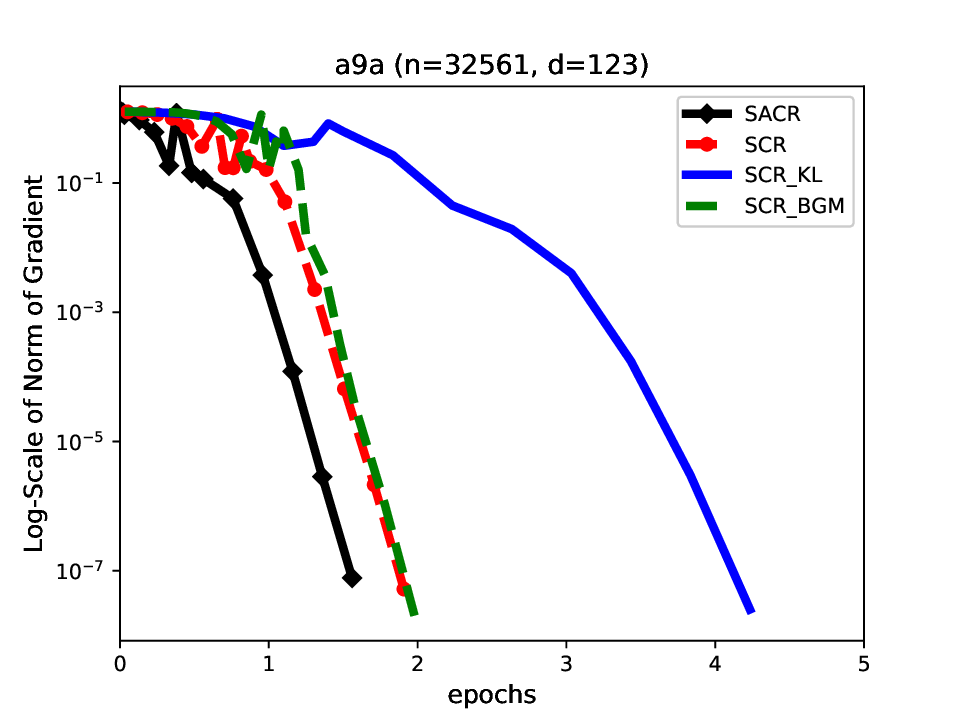}
	\includegraphics[width=0.40\textwidth]{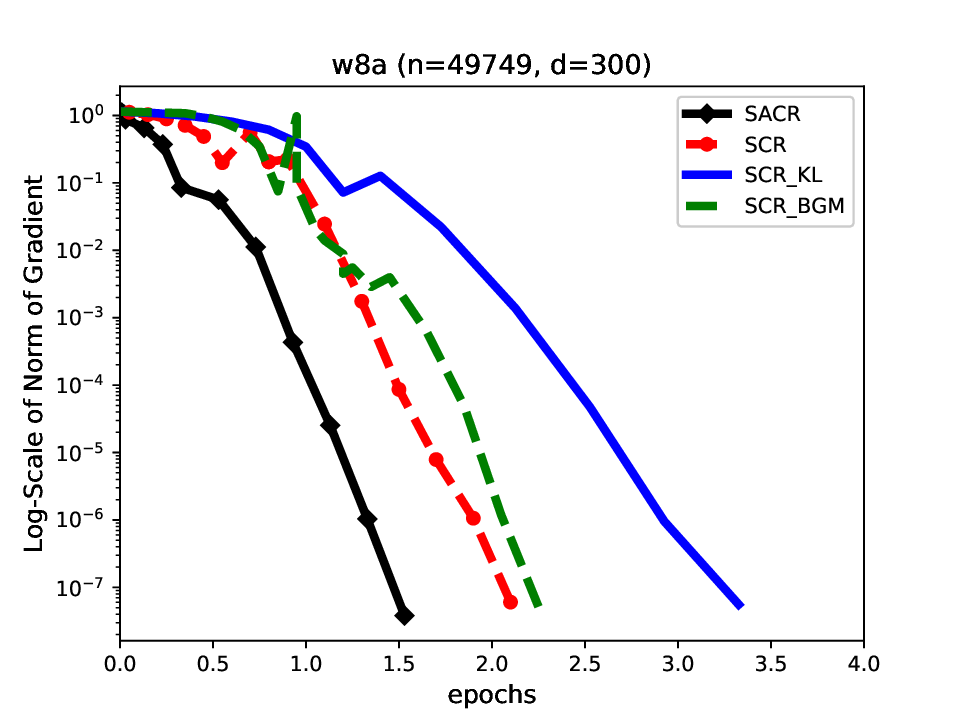}
	\includegraphics[width=0.40\textwidth]{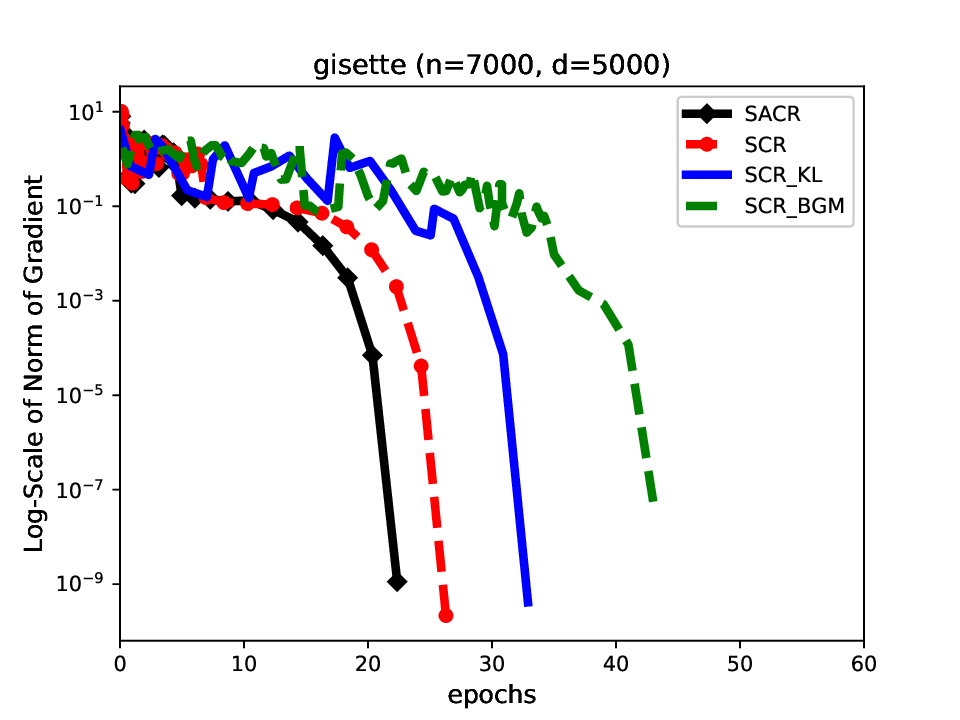}
	\includegraphics[width=0.40\textwidth]{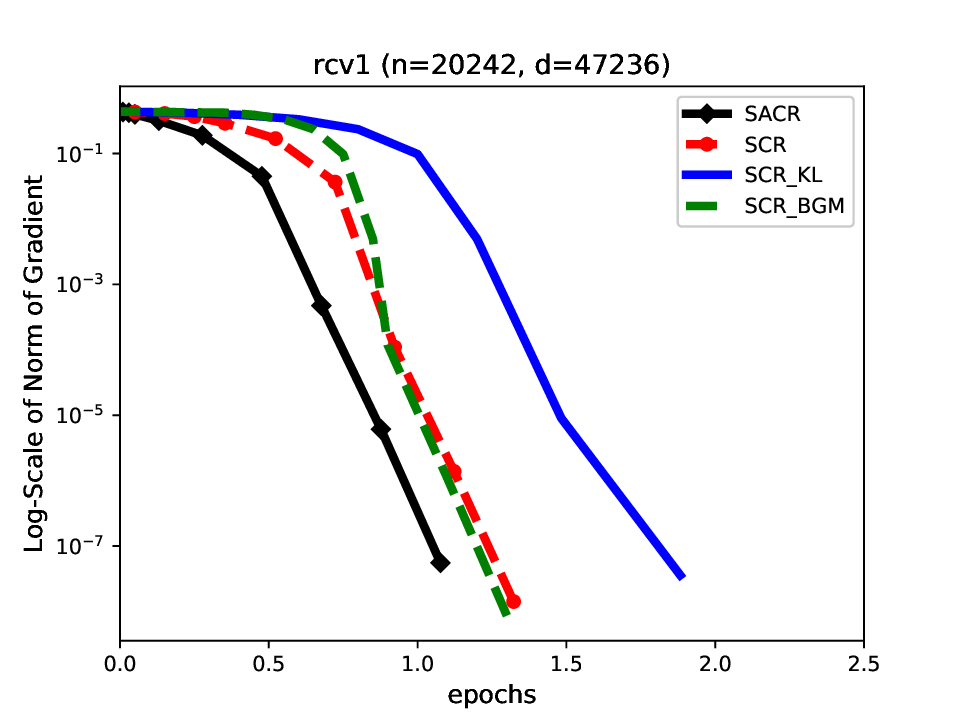}
	\includegraphics[width=0.40\textwidth]{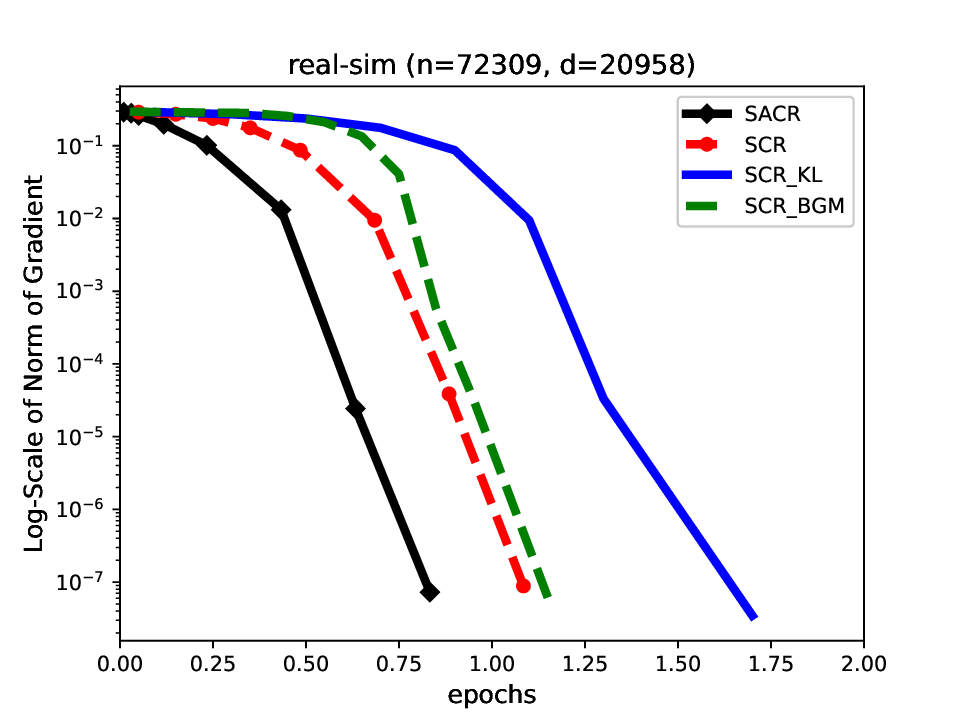}
	\caption{Performance of our algorithm and three sub-sampled cubic regularized algorithms on eight datasets with the log-scale of the norm of gradient vs.\ number of epochs. }\label{fig:subsample-alg-result-low-epoch}
\end{figure*}
}

\section{Concluding Remarks}
The theoretical properties of subsampled Hessian Newton-type methods have recently received a lot of  attention, but their acceleration has not been well studied  in the literature. In this paper, we focus on the sum-of-nonconvex problem and propose a novel way to accelerate adaptive cubic regularization of Newton's method with either \textit{uniform} or \textit{non-uniform} subsampled Hessians. Our new algorithm achieves the global iteration complexity of $O(\epsilon^{-1/3})$ with high probability, which matches that of the original accelerated cubic regularization methods~\citep{Jiang-2020-Unified} using the \textit{full}\/ Hessian information. In the worst case scenario, we demonstrate that our algorithm still achieves an $O(\epsilon^{-5/6}\log(\epsilon^{-1}))$ iteration complexity bound. The proof techniques are new to our knowledge and can be of independent interets.  

Our empirical evaluation show that the new algorithm studied in this paper is generally more efficient than its deterministic counterpart, LBFGS, mini-batch LBFGS, and SGD, on regularized logistic regression problems for real datasets.

\bibliographystyle{plainnat}
\bibliography{ref}

\begin{thebibliography}{69}
\providecommand{\natexlab}[1]{#1}
\providecommand{\url}[1]{\texttt{#1}}
\expandafter\ifx\csname urlstyle\endcsname\relax
  \providecommand{\doi}[1]{doi: #1}\else
  \providecommand{\doi}{doi: \begingroup \urlstyle{rm}\Url}\fi

\bibitem[Agarwal et~al.(2017)Agarwal, Bullins, and Hazan]{Agarwal-2017-Second}
N.~Agarwal, B.~Bullins, and E.~Hazan.
\newblock Second-order stochastic optimization for machine learning in linear
  time.
\newblock \emph{The Journal of Machine Learning Research}, 18\penalty0
  (1):\penalty0 4148--4187, 2017.

\bibitem[Allen-Zhu(2017)]{Allen-2017-Katyusha}
Z.~Allen-Zhu.
\newblock Katyusha: the first direct acceleration of stochastic gradient
  methods.
\newblock In \emph{STOC}, pages 1200--1205. ACM, 2017.

\bibitem[Allen-Zhu and Yuan(2016)]{Allen-2016-Improved}
Z.~Allen-Zhu and Y.~Yuan.
\newblock Improved {SVRG} for non-strongly-convex or sum-of-non-convex
  objectives.
\newblock In \emph{ICML}, pages 1080--1089, 2016.

\bibitem[Allen-Zhu(2018)]{Allen-2018-Natasha}
Zeyuan Allen-Zhu.
\newblock Natasha 2: Faster non-convex optimization than {SGD}.
\newblock In \emph{NIPS}, pages 2675--2686, 2018.

\bibitem[Arjevani et~al.(2019)Arjevani, Shamir, and
  Shiff]{Arjevani-2019-Oracle}
Y.~Arjevani, O.~Shamir, and R.~Shiff.
\newblock Oracle complexity of second-order methods for smooth convex
  optimization.
\newblock \emph{Mathematical Programming}, 178\penalty0 (1-2):\penalty0
  327--360, 2019.

\bibitem[Baldi et~al.(2014)Baldi, Sadowski, and Whiteson]{Baldi-2014-Searching}
P.~Baldi, P.~Sadowski, and D.~Whiteson.
\newblock Searching for exotic particles in high-energy physics with deep
  learning.
\newblock \emph{Nature communications}, 5:\penalty0 4308, 2014.

\bibitem[Bellavia et~al.(2019)Bellavia, Gurioli, Morini, and
  Toint]{Bellavia-2019-Adaptive}
S.~Bellavia, G.~Gurioli, B.~Morini, and P.~L. Toint.
\newblock Adaptive regularization algorithms with inexact evaluations for
  nonconvex optimization.
\newblock \emph{SIAM Journal on Optimization}, 29\penalty0 (4):\penalty0
  2881--2915, 2019.

\bibitem[Bellavia et~al.(2021)Bellavia, Gurioli, and
  Morini]{Bellavia-2021-Finite-Sum}
S.~Bellavia, G.~Gurioli, and B.~Morini.
\newblock Adaptive cubic regularization methods with dynamic inexact hessian
  information and applications to finite-sum minimization.
\newblock \emph{IMA Journal of Numerical Analysis}, 41\penalty0 (1):\penalty0
  764--799, 2021.

\bibitem[Bellavia and Gurioli(2022)]{Bellavia-2020-Complexity}
Stefania Bellavia and Gianmarco Gurioli.
\newblock Stochastic analysis of an adaptive cubic regularization method under
  inexact gradient evaluations and dynamic {H}essian accuracy.
\newblock \emph{Optimization}, 71\penalty0 (1):\penalty0 227--261, 2022.

\bibitem[Berahas et~al.(2020)Berahas, Bollapragada, and
  Nocedal]{Berahas-2020-Investigation}
A.~S. Berahas, R.~Bollapragada, and J.~Nocedal.
\newblock An investigation of {N}ewton-sketch and subsampled {N}ewton methods.
\newblock \emph{Optimization Methods and Software}, 35\penalty0 (4):\penalty0
  661--680, 2020.

\bibitem[Birgin et~al.(2017)Birgin, Gardenghi, Mart{\'\i}nez, Santos, and
  Toint]{Birgin-2017-Worst}
E.~G. Birgin, J.~L. Gardenghi, J.~M. Mart{\'\i}nez, S.~A. Santos, and P.~L.
  Toint.
\newblock Worst-case evaluation complexity for unconstrained nonlinear
  optimization using high-order regularized models.
\newblock \emph{Mathematical Programming}, 163\penalty0 (1-2):\penalty0
  359--368, 2017.

\bibitem[Blanchet et~al.(2019)Blanchet, Cartis, Menickelly, and
  Scheinberg]{Blanchet-2019-Convergence}
J.~Blanchet, C.~Cartis, M.~Menickelly, and K.~Scheinberg.
\newblock Convergence rate analysis of a stochastic trust-region method via
  supermartingales.
\newblock \emph{INFORMS Journal on Optimization}, 1\penalty0 (2):\penalty0
  92--119, 2019.

\bibitem[Bollapragada et~al.(2019)Bollapragada, Byrd, and
  Nocedal]{Bollapragada-2019-Exact}
R.~Bollapragada, R.~H. Byrd, and J.~Nocedal.
\newblock Exact and inexact subsampled {N}ewton methods for optimization.
\newblock \emph{IMA Journal of Numerical Analysis}, 39\penalty0 (2):\penalty0
  545--578, 2019.

\bibitem[Bottou et~al.(2018)Bottou, Curtis, and
  Nocedal]{Bottou-2018-Optimization}
L.~Bottou, F.~E. Curtis, and J.~Nocedal.
\newblock Optimization methods for large-scale machine learning.
\newblock \emph{SIAM Review}, 60\penalty0 (2):\penalty0 223--311, 2018.

\bibitem[Bubeck et~al.(2019)Bubeck, Jiang, Lee, Li, and
  Sidford]{Bubeck-2019-Near}
S.~Bubeck, Q.~Jiang, Y.~T. Lee, Y.~Li, and A.~Sidford.
\newblock Near-optimal method for highly smooth convex optimization.
\newblock In \emph{COLT}, pages 492--507, 2019.

\bibitem[Byrd et~al.(2011)Byrd, Chin, Neveitt, and Nocedal]{Byrd-2011-Use}
R.~H. Byrd, G.~M. Chin, W.~Neveitt, and J.~Nocedal.
\newblock On the use of stochastic {H}essian information in optimization
  methods for machine learning.
\newblock \emph{SIAM Journal on Optimization}, 21\penalty0 (3):\penalty0
  977--995, 2011.

\bibitem[Byrd et~al.(2016)Byrd, Hansen, Nocedal, and
  Singer]{Byrd-2016-Stochastic}
R.H Byrd, S.L Hansen, J.~Nocedal, and Y.~Singer.
\newblock A stochastic quasi-{N}ewton method for large-scale optimization.
\newblock \emph{SIAM Journal on Optimization}, 26\penalty0 (2):\penalty0
  1008--1031, 2016.

\bibitem[Cartis and Scheinberg(2018)]{Cartis-2018-Probabilistic}
C.~Cartis and K.~Scheinberg.
\newblock Global convergence rate analysis of unconstrained optimization
  methods based on probabilistic models.
\newblock \emph{Mathematical Programming}, 169\penalty0 (2):\penalty0 337--375,
  2018.

\bibitem[Cartis et~al.(2011{\natexlab{a}})Cartis, Gould, and
  Toint]{Cartis-2011-Adaptive-I}
C.~Cartis, N.~I.~M. Gould, and P.~L. Toint.
\newblock Adaptive cubic regularisation methods for unconstrained optimization.
  {P}art {I}: motivation, convergence and numerical results.
\newblock \emph{Mathematical Programming}, 127\penalty0 (2):\penalty0 245--295,
  2011{\natexlab{a}}.

\bibitem[Cartis et~al.(2011{\natexlab{b}})Cartis, Gould, and
  Toint]{Cartis-2011-Adaptive-II}
C.~Cartis, N.~I.~M. Gould, and P.~L. Toint.
\newblock Adaptive cubic regularisation methods for unconstrained optimization.
  {P}art {II}: worst-case function-and derivative-evaluation complexity.
\newblock \emph{Mathematical Programming}, 130\penalty0 (2):\penalty0 295--319,
  2011{\natexlab{b}}.

\bibitem[Collobert et~al.(2002)Collobert, Bengio, and
  Bengio]{Collobert-2002-Parallel}
R.~Collobert, S.~Bengio, and Y.~Bengio.
\newblock A parallel mixture of {SVM}s for very large scale problems.
\newblock In \emph{NIPS}, pages 633--640, 2002.

\bibitem[Cormode and Dickens(2019)]{Cormode-2019-Iterative}
G.~Cormode and C.~Dickens.
\newblock Iterative {H}essian sketch in input sparsity time.
\newblock \emph{ArXiv Preprint:1910.14166}, 2019.

\bibitem[Doikov and Richt{\'a}rik(2018)]{Doikov-2018-Randomized}
N.~Doikov and P.~Richt{\'a}rik.
\newblock Randomized block cubic {N}ewton method.
\newblock In \emph{International Conference on Machine Learning}, pages
  1290--1298. PMLR, 2018.

\bibitem[Drineas and Mahoney(2018)]{Drineas-2018-Lectures}
P.~Drineas and M.~W. Mahoney.
\newblock Lectures on randomized numerical linear algebra.
\newblock \emph{The Mathematics of Data}, 25:\penalty0 1, 2018.

\bibitem[Duchi et~al.(2011)Duchi, Hazan, and Singer]{Duchi-11-Adaptive}
J.~Duchi, E.~Hazan, and Y.~Singer.
\newblock Adaptive subgradient methods for online learning and stochastic
  optimization.
\newblock \emph{Journal of Machine Learning Research}, 12:\penalty0 2121--2159,
  2011.

\bibitem[Erdogdu and Montanari(2015)]{Erdogdu-2015-Convergence}
M.~A. Erdogdu and A.~Montanari.
\newblock Convergence rates of sub-sampled newton methods.
\newblock In \emph{NIPS}, pages 3052--3060. MIT Press, 2015.

\bibitem[Friedman et~al.(2001)Friedman, Hastie, and
  Tibshirani]{Friedman-2001-Elements}
J.~Friedman, T.~Hastie, and R.~Tibshirani.
\newblock \emph{The elements of statistical learning}, volume~1.
\newblock Springer series in statistics New York, 2001.

\bibitem[Frostig et~al.(2015)Frostig, Ge, Kakade, and
  Sidford]{Frostig-2015-Regularizing}
R.~Frostig, R.~Ge, S.~Kakade, and A.~Sidford.
\newblock Un-regularizing: approximate proximal point and faster stochastic
  algorithms for empirical risk minimization.
\newblock In \emph{ICML}, pages 2540--2548, 2015.

\bibitem[Garber and Hazan(2015)]{Garber-2015-Fast}
D.~Garber and E.~Hazan.
\newblock Fast and simple {PCA} via convex optimization.
\newblock \emph{ArXiv Preprint: 1509.05647}, 2015.

\bibitem[Gasnikov et~al.(2019)Gasnikov, Dvurechensky, Gorbunov, Vorontsova,
  Selikhanovych, and Uribe]{Gasnikov-2019-Optimal}
A.~Gasnikov, P.~Dvurechensky, E.~Gorbunov, E.~Vorontsova, D.~Selikhanovych, and
  C.~A. Uribe.
\newblock Optimal tensor methods in smooth convex and uniformly convex
  optimization.
\newblock In \emph{COLT}, pages 1374--1391, 2019.

\bibitem[Ghadimi and Lan(2016)]{Ghadimi-2016-Accelerated}
S.~Ghadimi and G.~Lan.
\newblock Accelerated gradient methods for nonconvex nonlinear and stochastic
  programming.
\newblock \emph{Mathematical Programming}, 156\penalty0 (1-2):\penalty0 59--99,
  2016.

\bibitem[Ghadimi et~al.(2017)Ghadimi, Liu, and Zhang]{Ghadimi-2017-Second}
S.~Ghadimi, H.~Liu, and T.~Zhang.
\newblock Second-order methods with cubic regularization under inexact
  information.
\newblock \emph{ArXiv Preprint: 1710.05782}, 2017.

\bibitem[Goodfellow et~al.(2016)Goodfellow, Bengio, and
  Courville]{Goodfellow-2016-Deep}
I.~Goodfellow, Y.~Bengio, and A.~Courville.
\newblock \emph{Deep Learning}.
\newblock {MIT} Press, 2016.

\bibitem[Jain et~al.(2018)Jain, Kakade, Kidambi, Netrapalli, and
  Sidford]{Jain-2018-Accelerating}
P.~Jain, S.~M. Kakade, R.~Kidambi, P.~Netrapalli, and A.~Sidford.
\newblock Accelerating stochastic gradient descent for least squares
  regression.
\newblock In \emph{COLT}, pages 545--604, 2018.

\bibitem[Jiang et~al.(2020)Jiang, Lin, and Zhang]{Jiang-2020-Unified}
B.~Jiang, T.~Lin, and S.~Zhang.
\newblock A unified adaptive tensor approximation scheme to accelerate
  composite convex optimization.
\newblock \emph{SIAM Journal on Optimization}, 30\penalty0 (4):\penalty0
  2897--2926, 2020.

\bibitem[Jiang et~al.(2021)Jiang, Wang, and Zhang]{Jiang-2020-Optimal}
B.~Jiang, H.~Wang, and S.~Zhang.
\newblock An optimal high-order tensor method for convex optimization.
\newblock \emph{Mathematics of Operations Research}, 46\penalty0 (4):\penalty0
  1390--1412, 2021.

\bibitem[Kohler and Lucchi(2017)]{Kohler-2017-SubSample}
J.~M. Kohler and A.~Lucchi.
\newblock Subsampled cubic regularization for non-convex optimization.
\newblock In \emph{ICML}, pages 1895--1904, 2017.

\bibitem[Kulis(2013)]{Kulis-2013-Metric}
B.~Kulis.
\newblock Metric learning: A survey.
\newblock \emph{Foundations and Trends{\textregistered} in Machine Learning},
  5\penalty0 (4):\penalty0 287--364, 2013.

\bibitem[Kylasa et~al.(2019)Kylasa, Roosta-Khorasani, Mahoney, and
  Grama]{Kylasa-2018-GPU}
S.~B. Kylasa, F.~Roosta-Khorasani, M.~W. Mahoney, and A.~Grama.
\newblock G{PU} accelerated sub-sampled {N}ewton's method.
\newblock In \emph{Proceedings of the 2019 SIAM International Conference on
  Data Mining}, pages 702--710. SIAM, 2019.

\bibitem[Lan(2012)]{Lan-2012-Optimal}
G.~Lan.
\newblock An optimal method for stochastic composite optimization.
\newblock \emph{Mathematical Programming}, 133\penalty0 (1):\penalty0 365--397,
  2012.

\bibitem[Li et~al.(2020)Li, Wang, and Zhang]{Li-2019-SubNewton}
X.~Li, S.~Wang, and Z.~Zhang.
\newblock Do subsampled {N}ewton methods work for high-dimensional data.
\newblock In \emph{Proceedings of the AAAI Conference on Artificial
  Intelligence}, volume~34, pages 4723--4730, 2020.

\bibitem[Liu et~al.(2017)Liu, Hsieh, Lee, and Sun]{Liu-2017-Inexact}
X.~Liu, C-J. Hsieh, J.~D. Lee, and Y.~Sun.
\newblock An inexact subsampled proximal {N}ewton-type method for large-scale
  machine learning.
\newblock \emph{ArXiv Preprint: 1708.08552}, 2017.

\bibitem[Luenberger and Ye(1984)]{Luenberger-1984-Linear}
D.~G. Luenberger and Y.~Ye.
\newblock \emph{Linear and Nonlinear Programming}, volume~2.
\newblock Springer, 1984.

\bibitem[Mason et~al.(2000)Mason, Baxter, Bartlett, and
  Frean]{Mason-2000-Boosting}
L.~Mason, J.~Baxter, P.~L. Bartlett, and M.~R. Frean.
\newblock Boosting algorithms as gradient descent.
\newblock In \emph{NIPS}, pages 512--518, 2000.

\bibitem[Monteiro and Svaiter(2012)]{Monteiro-2012-Iteration}
R.~D.~C. Monteiro and B.~F. Svaiter.
\newblock Iteration-complexity of a {N}ewton proximal extragradient method for
  monotone variational inequalities and inclusion problems.
\newblock \emph{SIAM Journal on Optimization}, 22\penalty0 (3):\penalty0
  914--935, 2012.

\bibitem[Monteiro and Svaiter(2013)]{Monteiro-2013-Accelerated}
R.~D.~C. Monteiro and B.~F. Svaiter.
\newblock An accelerated hybrid proximal extragradient method for convex
  optimization and its implications to second-order methods.
\newblock \emph{SIAM Journal on Optimization}, 23\penalty0 (3):\penalty0
  1092--1125, 2013.

\bibitem[Nesterov(1983)]{Nesterov-1983-Method}
Yu. Nesterov.
\newblock A method for solving the convex programming problem with convergence
  rate {O}$(1/k^2)$.
\newblock \emph{Dokl. Akad. Nauk SSSR}, pages 543--547, 1983.
\newblock (in Russian).

\bibitem[Nesterov(2004)]{Nesterov-2004-Introductory}
Yu. Nesterov.
\newblock \emph{Introductory Lectures on Convex Optimization: A Basic Course}.
\newblock Springer Science \& Business Media, 2004.

\bibitem[Nesterov(2008)]{Nesterov-2008-Accelerating}
Yu. Nesterov.
\newblock Accelerating the cubic regularization of {N}ewton’s method on
  convex problems.
\newblock \emph{Mathematical Programming}, 112\penalty0 (1):\penalty0 159--181,
  2008.

\bibitem[Nesterov(2019)]{Nesterov-2019-Implementable}
Yu. Nesterov.
\newblock Implementable tensor methods in unconstrained convex optimization.
\newblock \emph{Mathematical Programming}, pages 1--27, 2019.

\bibitem[Paquette and Scheinberg(2020)]{Paquette-2020-Stochastic}
C.~Paquette and K.~Scheinberg.
\newblock A stochastic line search method with expected complexity analysis.
\newblock \emph{SIAM Journal on Optimization}, 30\penalty0 (1):\penalty0
  349--376, 2020.

\bibitem[Pilanci and Wainwright(2017)]{Pilanci-2017-Newton}
M.~Pilanci and M.~J. Wainwright.
\newblock Newton sketch: A near linear-time optimization algorithm with
  linear-quadratic convergence.
\newblock \emph{SIAM Journal on Optimization}, 27\penalty0 (1):\penalty0
  205--245, 2017.

\bibitem[Qu and Richt{\'a}rik(2016{\natexlab{a}})]{Qu-2016-Coordinate-I}
Z.~Qu and P.~Richt{\'a}rik.
\newblock Coordinate descent with arbitrary sampling {I}: algorithms and
  complexity.
\newblock \emph{Optimization Methods and Software}, 31\penalty0 (5):\penalty0
  829--857, 2016{\natexlab{a}}.

\bibitem[Qu and Richt{\'a}rik(2016{\natexlab{b}})]{Qu-2016-Coordinate-II}
Z.~Qu and P.~Richt{\'a}rik.
\newblock Coordinate descent with arbitrary sampling {II}: Expected separable
  overapproximation.
\newblock \emph{Optimization Methods and Software}, 31\penalty0 (5):\penalty0
  858--884, 2016{\natexlab{b}}.

\bibitem[Robbins and Monro(1951)]{Robbins-1951-Stochastic}
H.~Robbins and S.~Monro.
\newblock A stochastic approximation method.
\newblock \emph{The Annals of Mathematical Statistics}, pages 400--407, 1951.

\bibitem[Roosta-Khorasani and Mahoney(2019)]{Roosta-2019-SubNewton}
F.~Roosta-Khorasani and M.~W. Mahoney.
\newblock Sub-sampled {N}ewton methods.
\newblock \emph{Mathematical Programming}, 174\penalty0 (1-2):\penalty0
  293--326, 2019.

\bibitem[Schraudolph et~al.(2007)Schraudolph, Yu, and
  G{\"u}nter]{Schraudolph-2007-Stochastic}
N.~N. Schraudolph, J.~Yu, and S.~G{\"u}nter.
\newblock A stochastic quasi-{N}ewton method for online convex optimization.
\newblock In \emph{AISTATS}, pages 436--443, 2007.

\bibitem[Shalev-Shwartz(2016)]{Shalev-2016-Sdca}
S.~Shalev-Shwartz.
\newblock {SDCA} without duality, regularization, and individual convexity.
\newblock In \emph{ICML}, pages 747--754, 2016.

\bibitem[Shalev-Shwartz and Zhang(2013)]{Shalev-2013-Accelerated}
S.~Shalev-Shwartz and T.~Zhang.
\newblock Accelerated mini-batch stochastic dual coordinate ascent.
\newblock In \emph{NIPS}, pages 378--385, 2013.

\bibitem[Song and Liu(2019)]{Song-2019-Inexact}
C.~Song and J.~Liu.
\newblock Inexact proximal cubic regularized {N}ewton methods for convex
  optimization.
\newblock \emph{ArXiv Preprint: 1902.02388}, 2019.

\bibitem[Sra et~al.(2012)Sra, Nowozin, and Wright]{Sra-2012-Optimization}
S.~Sra, S.~Nowozin, and S.~J. Wright.
\newblock \emph{Optimization for Machine Learning}.
\newblock {MIT} Press, 2012.

\bibitem[Tripuraneni et~al.(2018)Tripuraneni, Stern, Jin, Regier, and
  Jordan]{Tripuraneni-2018-Stochastic}
N.~Tripuraneni, M.~Stern, C.~Jin, J.~Regier, and M.~I. Jordan.
\newblock Stochastic cubic regularization for fast nonconvex optimization.
\newblock In \emph{NIPS}, pages 2899--2908, 2018.

\bibitem[Wang et~al.(2017)Wang, Ma, Goldfarb, and Liu]{Wang-2017-Stochastic}
X.~Wang, S.~Ma, D.~Goldfarb, and W.~Liu.
\newblock Stochastic quasi-{N}ewton methods for nonconvex stochastic
  optimization.
\newblock \emph{SIAM Journal on Optimization}, 27\penalty0 (2):\penalty0
  927--956, 2017.

\bibitem[Xu et~al.(2016)Xu, Yang, Roosta-Khorasani, R{\'e}, and
  Mahoney]{Xu-2016-SubNewton}
P.~Xu, J.~Yang, F.~Roosta-Khorasani, C.~R{\'e}, and M.~W. Mahoney.
\newblock Sub-sampled {N}ewton methods with non-uniform sampling.
\newblock In \emph{NIPS}, pages 3000--3008, 2016.

\bibitem[Xu et~al.(2020{\natexlab{a}})Xu, Roosta-Khorasani, and
  Mahoney]{Xu-2017-Empirical}
P.~Xu, F.~Roosta-Khorasani, and M.~W. Mahoney.
\newblock Second-order optimization for non-convex machine learning: An
  empirical study.
\newblock In \emph{SDM}, pages 199--207. SIAM, 2020{\natexlab{a}}.

\bibitem[Xu et~al.(2020{\natexlab{b}})Xu, Roosta-Khorasani, and
  Mahoney]{Xu-2019-Newton}
P.~Xu, F.~Roosta-Khorasani, and M.~W. Mahoney.
\newblock {N}ewton-type methods for non-convex optimization under inexact
  {H}essian information.
\newblock \emph{Mathematical Programming}, 184:\penalty0 35--70,
  2020{\natexlab{b}}.

\bibitem[Yao et~al.(2021)Yao, Xu, Roosta-Khorasani, and
  Mahoney]{Yao-2018-Inexact}
Z.~Yao, P.~Xu, F.~Roosta-Khorasani, and M.~W. Mahoney.
\newblock Inexact non-convex {N}ewton-type methods.
\newblock \emph{INFORMS Journal on Optimization}, 3\penalty0 (2):\penalty0
  119--226, 2021.

\bibitem[Ye et~al.(2020)Ye, Luo, and Zhang]{Ye-2020-Nesterov}
H.~Ye, L.~Luo, and Z.~Zhang.
\newblock Nesterov's acceleration for approximate {N}ewton.
\newblock \emph{Journal of Machine Learning Research}, 21\penalty0
  (142):\penalty0 1--37, 2020.

\bibitem[Zhang et~al.(2021)Zhang, Xiao, and Zhang]{Zhang-2018-SubNewton}
J.~Zhang, L.~Xiao, and S.~Zhang.
\newblock Adaptive stochastic variance reduction for subsampled newton method
  with cubic regularization.
\newblock \emph{INFORMS Journal on Optimization}, published online:
  https://doi.org/10.1287/ijoo.2021.0058, 2021.

\end{thebibliography}

\newpage

\appendix
\section{Proofs in Section \ref{Section:Probabilistic}}\label{Sec: Prob-Proofs}
We prove Lemma \ref{Lemma:SAARC-T1} and Lemma \ref{Lemma:SAARC-T2}, which describe the relation between the total iteration numbers in Algorithm \ref{Alg:SSAS} and the amount of successful iterations $|\SucCal|$ in Algorithm \ref{Alg:ASAS}.

\paragraph{Proof of Lemma \ref{Lemma:SAARC-T1}:}
First by invoking the fact
\begin{equation}\small
f(\x_i + \s_i)=f(\x_i) + \s_i^\top \nabla f(\x_i) + \frac{1}{2} \s_i^\top \nabla^2 f(\x_i) \s_i + \int_{0}^{1} (1 - \tau) \s_i^\top[\nabla^2 f(\x_i + \tau \s_i) - \nabla^2 f(\x_i)] \s_i \; d \tau,  \label{Tailor-Expansion}
\end{equation}
we have that
\begin{eqnarray}
& & f(\x_i + \s_i) \nonumber \\
& = & f(\x_i) + \s_i^\top \nabla f(\x_i) + \frac{1}{2} \s_i^\top \nabla^2 f(\x_i) \s_i + \int_{0}^{1} (1 - \tau) \s_i^\top[\nabla^2 f(\x_i + \tau \s_i) - \nabla^2 f(\x_i)] \s_i \; d \tau  \nonumber \\
& {\eqref{Def:Lipschitz-Hessian} \above 0pt \leq} & f(\x_i) + \s_i^\top \nabla f(\x_i) + \frac{1}{2} \s_i^\top \nabla^2 f(\x_i) \s_i  + \frac{\bar{\rho}}{6}\|\s_i\|^3  \nonumber \\
& = & m(\s_i; \x_i, \sigma_i) +  \frac{1}{2} \s_i^\top\left(\nabla^2 f(\x_i) - H(\x_i)\right) \s_i + \left(\frac{\bar{\rho}}{6}-\frac{\sigma_i}{3}\right)\|\s_i\|^3 \nonumber \\
& {\eqref{Hession-Approximation} \above 0pt \leq} & m(\s_i; \x_i, \sigma_i) + {\epsilon_i}\|\s_i\|^2 + \left(\frac{\bar{\rho}}{6} - \frac{\sigma_i}{3}\right)\|\s_i\|^3. \nonumber
\end{eqnarray}
Next we argue that when $\sigma_i$ exceeds a certain constant, then it holds that
\begin{equation*}\label{f-less-m}
f(\x_i+\s_i)\leq m(\s_i; \x_i, \sigma_i).
\end{equation*}
The analysis is conducted according to the value of  $\left\|\s_i\right\|$ in two cases.
\begin{enumerate}
\item When $\left\|\s_i\right\|\geq 1$, we have
\begin{equation*}
f(\x_i + \s_i) \leq m(\s_i; \x_i, \sigma_i) + \left({\epsilon_i} + \frac{\bar{\rho}}{6} - \frac{\sigma_i}{3}\right)\|\s_i\|^3 ,
\end{equation*}
which in combination with the fact that $\epsilon_i \leq \epsilon_0 \le 1$ leads to
\begin{equation*}
\sigma_i\geq\frac{6 + \bar{\rho}}{2} \quad \Longrightarrow \quad f(\x_i+\s_i)\leq m(\s_i; \x_i, \sigma_i).
\end{equation*}
\item When $\|\s_i\|<1$, { 
 according to Condition~\ref{Cond:Approx_Subprob_SAARC}, 
}
it holds that
\begin{eqnarray}
\kappa_\theta\|\s_i\| > \kappa_\theta\|\s_i\|^2
& { \ge } & \|\nabla f(\x_i) + H(\x_i) \s_i + \sigma_i\|\s_i\|\cdot\s_i\| \nonumber\\
& \geq & \|\nabla f(\x_i)\| - \| H(\x_i)\|\|\s_i\| - \sigma_i\|\s_i\|^2 \nonumber \\
& \geq & \|\nabla f(\x_i)\| - (L+{ \epsilon_0}+\sigma_i)\|\s_i\| \nonumber ,
\end{eqnarray}
where the last inequality holds true since $\left\|\s_i\right\|<1$ and {
\begin{equation*}\label{bounded-H-x}
\| H(\x_i)\| \le \frac{1}{n|\SCal|}\sum_{j\in\SCal} \frac{1}{p_j} \| \nabla^2 f_j(\x_i) \| + \epsilon_i \|{\BI}\| \overset{\eqref{Bounded-Hessian}}\le \frac{1}{n|\SCal|}\sum_{j\in\SCal} \frac{1}{p_j} L + \epsilon_i  \le L + \epsilon_0.
\end{equation*}
 This can be further rewritten as}
\begin{equation}\label{lowerbound-si}
\|\s_i\| \geq \frac{\|\nabla f(\x_i)\|}{L+{ \epsilon_0} +\sigma_i + \kappa_\theta}.
\end{equation}
Moreover, recall that
\begin{equation*}
f(\x_i + \s_i) \leq m(\s_i; \x_i, \sigma_i) + \left(\frac{\epsilon_i}{\|\s_i\|} + \frac{\bar{\rho}}{6} - \frac{\sigma_i}{3}\right)\|\s_i\|^3 ,
\end{equation*}
and combining the above two inequalities yields that
\begin{equation*}
{\frac{\epsilon_i(L+{ \epsilon_0}+\sigma_i + \kappa_\theta)}{ \|\nabla f(\x_i)\|} + \frac{\bar{\rho}}{6} - \frac{\sigma_i}{3} \le 0 \Longrightarrow f(\x_i+\s_i) \leq m(\s_i; \x_i, \sigma_i).} \end{equation*}
Recall in Algorithm \ref{Alg:SSAS} that
\begin{equation}\label{upperbound-epsiloni}
\epsilon_i = \min\left\{\frac{\|\nabla f(\x_i)\|}{ 6}, \ {  \epsilon_{0}}\right\} \leq \frac{\|\nabla f(\x_i)\|}{ 6},
\end{equation}
then it suffices to show
\begin{equation*}
\frac{L+{ \epsilon_0}+\sigma_i+ \kappa_\theta }{6} + \frac{\bar{\rho}}{6} - \frac{\sigma_i}{3} \leq 0.
\end{equation*}
That is:
\begin{equation*}
\sigma_i\geq L+{ \epsilon_0}+ \kappa_\theta   +  \bar \rho    \Longrightarrow f(\x_i+\s_i) \leq m(\s_i; \x_i, \sigma_i).
\end{equation*}
\end{enumerate}
In summary, we have concluded that
\begin{equation}\label{sigma-imply}
\sigma_i\geq  \max\left\{\frac{6 + \bar\rho}{2}, \  L+{ \epsilon_0}+ \kappa_\theta   +  \bar \rho      \right\} \Longrightarrow f(\x_i+\s_i) \leq m(\s_i; \x_i, \sigma_i),
\end{equation}
which implies that $\sigma_i<\max\{3 + 0.5\bar\rho, \   L+{ \epsilon_0}+ \kappa_\theta   +  \bar \rho      \}$ for $i\le T_1-2$. Moreover,
\begin{equation*}
{\sigma_{T_1} = \sigma_{T_1 -1} \leq \gamma_2  \sigma_{T_1 -2} \leq \gamma_2 \max\left\{3 + 0.5\bar\rho, \   L+{ \epsilon_0}+ \kappa_\theta   +  \bar \rho      \right\}.}
\end{equation*}
Then it holds that $\sigma_i \le \bar{\sigma}^P_1 = { \max\{\sigma_0, \ 3\gamma_2 + 0.5\bar\rho \gamma_2, \  \gamma_2  (L+\epsilon_0+ \kappa_\theta +   \bar \rho   )    \} }$ for any $i \le T_1$. On the other hand, it follows from the construction of Algorithm \ref{Alg:SSAS} that $\sigma_{\min}\leq\sigma_i$ for all iterations, and $\gamma_1\sigma_i\leq\sigma_{i+1}$ for all unsuccessful iterations. Consequently, we have
{
\begin{equation*}
\frac{\bar{\sigma}^P_1}{\sigma_{\min}} \geq \frac{\sigma_{T_1}}{\sigma_0} = \frac{\sigma_{T_1}}{\sigma_{T_1-1}} \cdot \prod_{j=0}^{T_1-2} \frac{\sigma_{j+1}}{\sigma_j} =  \prod_{j=0}^{T_1-2} \frac{\sigma_{j+1}}{\sigma_j} \geq \gamma_1^{T_1-1},
\end{equation*}
where the second equality is due to $\sigma_{T_1} = \sigma_{T_1-1}$ in Algorithm \ref{Alg:SSAS},
}
and hence
\begin{equation*}
T_1 \leq \left(1+ \frac{{1}}{\log\gamma_1} \log\left(\frac{\bar{\sigma}^P_1}{\sigma_{\min}}\right)\right).
\end{equation*}
This completes the proof of Lemma \ref{Lemma:SAARC-T1}. {\hfill $\Box$ \vskip 0.4cm}
\paragraph{Proof of Lemma \ref{Lemma:SAARC-T2}:}
We have
\begin{eqnarray*}
& & \s_{j}^\top \nabla f(\y_l+\s_{j})\\
& = & \s_{j}^\top[\nabla f(\y_l + \s_{j}) - \nabla f(\y_l) - \nabla^2 f(\y_l) \s_{j}] + \s_{j}^\top[\nabla f(\y_l) + \nabla^2 f(\y_l) \s_{j}] \\
& \leq & \|\nabla f(\y_l+\s_{j}) - \nabla f(\y_l) - \nabla^2 f(\y_l) \s_{j}\|\|\s_{j}\| + \s_{j}^\top[\nabla f(\y_l) + H(\y_l) \s_{j} + \sigma_j \|\s_j \| \s_j] \\
& & + \s_{j}^\top(\nabla^2 f(\y_l) - H(\y_l))\s_j - \sigma_j\|\s_j\|^3 \\
& {\eqref{Eqn:Approx_Subprob_SAARC} \above 0pt  \leq }  & \|\nabla f(\y_l+\s_j) - \nabla f(\y_l) - \nabla^2 f(\y_l) \s_{j}\|\|\s_j\| + (\kappa_\theta - \sigma_j)\|\s_j\|^3 + 2\epsilon_j\|\s_j\|^2 \\
& = & \|\int_0^1 [\nabla^2 f(\y_l + \tau \cdot \s_j) - \nabla^2 f(\y_l)] \s_j \; d \tau\| \|\s_j\| + (\kappa_\theta - \sigma_j)\|\s_j\|^3 + 2\epsilon_j\|\s_j\|^2 \\
& \eqref{Def:Lipschitz-Hessian} \above 0pt  \leq & \left(\frac{\bar{\rho}}{2} + \kappa_\theta - \sigma_{j}\right)\|\s_j\|^3 + 2\epsilon_j\|\s_j\|^2.
\end{eqnarray*}
Next we argue that when $\sigma_i$ exceeds certain constant, it holds
\begin{equation*}\label{sf-more-eta}
-\frac{\s_{j}^\top \nabla f(\y_l+\s_{j})}{\|\s_j\|^3} \geq \eta,
\end{equation*}
The analysis is conducted according to the value of $\|\s_j\|$ in two cases.
\begin{enumerate}
\item When $\|\s_j\|\geq 1$, we have
\begin{equation*}
\s_j^\top \nabla f(\y_l+\s_j) \leq \left(\frac{\bar{\rho}}{2} + \kappa_\theta - \sigma_j + 2\epsilon_j\right)\|\s_j\|^3,
\end{equation*}
which combined with $\epsilon_j \leq \epsilon_0 \le 1$ implies that
\begin{equation*}
\sigma_{j} \geq \frac{\bar{\rho}}{2} + \kappa_\theta+\eta+2 \Longrightarrow -\frac{\s_j^\top \nabla f(\y_l+\s_j) }{\|\s_j\|^3} \geq \eta.
\end{equation*}
\item When $\|\s_j\|<1$, similar argument of \eqref{lowerbound-si} implies that
\begin{equation}\label{lowerbound2-si}
\|\s_{j}\| \geq \frac{\|\nabla f(\y_l)\|}{L+{ \epsilon_0}+ \sigma_i + \kappa_\theta}.
\end{equation}
Moreover, recall that
\begin{equation*}
\s_j^\top  \nabla f(\y_l+\s_{j}) \leq \left(\frac{\bar{\rho}}{2} + \kappa_\theta - \sigma_{j}+\frac{2\epsilon_{j}}{\|\s_j\|}\right)\|\s_{j}\|^3.
\end{equation*}
Combining the above two inequalities yields that
\begin{equation*}
\frac{2\epsilon_j(L+{ \epsilon_0}+ \sigma_i + \kappa_\theta)}{\|\nabla f(\y_l)\|} + \frac{\bar{\rho}}{2} + \kappa_\theta - \sigma_{j} +\eta \leq 0 \Longrightarrow -\frac{\s_j^\top \nabla f(\y_l+\s_{j})}{\|\s_{j}\|^3} \geq \eta.
\end{equation*}
Recall {in Algorithm \ref{Alg:ASAS}} that
\begin{equation*}
\epsilon_{j} = \min\left\{ \frac{\|\nabla f(\y_l)\|}{4} , \ {  \epsilon_{0}} \right\} \leq \frac{\|\nabla f(\y_l)\|}{4},
\end{equation*}
then it suffices to show
\begin{equation*}
\frac{L+{ \epsilon_0}+ \sigma_{j}+ \kappa_\theta}{2} + \frac{\bar{\rho}}{2} + \kappa_\theta - \sigma_{j} + \eta \leq 0.
\end{equation*}
That is,
\begin{equation*}
\sigma_{j} \geq L+{ \epsilon_0}+ \bar\rho + 3\kappa_\theta + 2\eta \Longrightarrow -\frac{\s_{j}^\top  \nabla f(\y_l+\s_{j})}{\|\s_{j}\|^3} \geq \eta.
\end{equation*}
\end{enumerate}
In summary, we have concluded that
\begin{equation*}
\sigma_{j} \geq \max\left\{\frac{\bar{\rho}}{2} + \kappa_\theta+\eta+2, L+{ \epsilon_0}+ \bar\rho + 3\kappa_\theta + 2\eta \right\} \Longrightarrow -\frac{\s_{j}^\top\nabla f(\y_l+\s_{j})}{\|\s_{j}\|^3} \geq \eta ,
\end{equation*}
which further implies that for any unsuccessful iteration $j \not\in \SucCal$, the following inequality holds true, 
\begin{equation*}
\sigma_{j} < \max\left\{\frac{\bar{\rho}}{2} + \kappa_\theta+\eta+2, \ L+{ \epsilon_0}+ \bar\rho + 3\kappa_\theta +2\eta\right\}.
\end{equation*}
Therefore, for any successful iteration $j \in \SucCal$, we have
\begin{equation*}
\sigma_{j+1}\leq \sigma_{j}\leq \gamma_2 \cdot \sigma_{j-1}\leq \gamma_2 \max\left\{\frac{\bar{\rho}}{2} + \kappa_\theta+\eta+2, \ L+{ \epsilon_0}+ \bar\rho + 3\kappa_\theta +2\eta\right\}.
\end{equation*}
Consequently, for any $0\le j \le T_2$, we have
\begin{equation}\label{bound-sigma-j}
\sigma_{j} \le \bar{\sigma}^P_2={ \max\left\{\bar{\sigma}^P_1, \ \frac{\gamma_2\bar{\rho}}{2} + \gamma_2\kappa_\theta+\gamma_2\eta+2\gamma_2, \ \gamma_2 L + \gamma_2\epsilon_0+ \gamma_2\bar\rho + 3\gamma_2 \kappa_\theta + 2\gamma_2\eta \right\}},
\end{equation}
where $\bar{\sigma}^P_1$ is responsible for an upper bound of $\sigma_{0}$. In addition, it follows from the construction of Algorithm \ref{Algorithm:SAARC} that $\sigma_{\min}\leq\sigma_{j}$ for all iterations, and $\gamma_1\sigma_{j}\leq\sigma_{j+1}$ for all unsuccessful iterations. Therefore, we have
\begin{equation*}
\frac{\bar{\sigma}^P_2}{\sigma_{\min}} \geq \frac{\sigma_{T_1+T_2}}{\sigma_{T_1}} = \prod_{j\in\SucCal} \frac{\sigma_{j+1}}{\sigma_{j}} \cdot \prod_{j\notin\SucCal} \frac{\sigma_{j+1}}{\sigma_{j}} \geq \gamma_1^{T_2-|\SucCal|}\left(\frac{\sigma_{\min}}{\bar{\sigma}^P_2}\right)^{|\SucCal|},
\end{equation*}
hence
\begin{equation*}
|\SucCal| \le T_2\leq |\SucCal|+\frac{\left(|\SucCal|+1\right)}{\log\gamma_1} \log\left(\frac{\bar{\sigma}^P_2}{\sigma_{\min}}\right) \leq \left(1+ \frac{2}{\log\gamma_1} \log\left(\frac{\bar{\sigma}^P_2}{\sigma_{\min}}\right)\right)|\SucCal|.
\end{equation*}
This completes the proof of Lemma \ref{Lemma:SAARC-T2}. {\hfill $\Box$ \vskip 0.4cm}

We present~\citet[Lemma~3.3 and~3.4]{Jiang-2020-Unified}, which are important to the subsequent analysis. 
\begin{lemma}\label{Lemma:General-Second-Order}
For any $\s\in\br^d$ and $\g\in\br^d$, it holds that
\begin{equation*}
\s^\top\g + \frac{1}{3}\sigma\|\s\|^3 \geq -\frac{2}{3\sqrt{\sigma}}\|\g\|^{\frac{3}{2}}.
\end{equation*}
\end{lemma}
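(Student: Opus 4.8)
\textbf{Proof proposal for Lemma~\ref{Lemma:General-Second-Order}.}
The plan is to reduce the vector inequality to a one-dimensional minimization via Cauchy--Schwarz. First I would invoke $\s^\top\g \ge -\|\s\|\,\|\g\|$, so that it suffices to prove the stronger scalar bound
\begin{equation*}
\frac{\sigma}{3}\|\s\|^3 - \|\s\|\,\|\g\| \ \ge\ -\frac{2}{3\sqrt{\sigma}}\|\g\|^{3/2}.
\end{equation*}
Setting $t=\|\s\|\ge 0$ and $a=\|\g\|\ge 0$, this is exactly the claim that the function $h(t)=\tfrac{\sigma}{3}t^3 - at$ is bounded below on $[0,\infty)$ by $-\tfrac{2}{3\sqrt{\sigma}}a^{3/2}$.

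Next I would minimize $h$ over $t\ge 0$. Since $h'(t)=\sigma t^2 - a$, the unique stationary point in the nonnegative reals is $t^\ast=\sqrt{a/\sigma}$, and because $h''(t)=2\sigma t>0$ there (and $h(t)\to+\infty$ as $t\to\infty$, with $h(0)=0$), $t^\ast$ is the global minimizer on $[0,\infty)$. Substituting gives
\begin{equation*}
h(t^\ast)=\frac{\sigma}{3}\Bigl(\frac{a}{\sigma}\Bigr)^{3/2} - a\sqrt{\frac{a}{\sigma}} = \frac{a^{3/2}}{3\sqrt{\sigma}} - \frac{a^{3/2}}{\sqrt{\sigma}} = -\frac{2}{3\sqrt{\sigma}}\,a^{3/2},
\end{equation*}
which is precisely the asserted lower bound; the degenerate case $a=0$ is trivial since then both sides are $\ge 0$. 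Translating back through $t=\|\s\|$, $a=\|\g\|$ and the Cauchy--Schwarz step completes the argument.

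There is no real obstacle here: the only thing to be slightly careful about is confirming that the stationary point is indeed a global (not merely local) minimum on the half-line and that the endpoint $t=0$ does not beat it, which is immediate from $h(0)=0 > h(t^\ast)$ whenever $a>0$. One could alternatively give a completely calculus-free proof by applying Young's inequality $a t \le \tfrac{\sigma}{3}t^3 + \tfrac{2}{3\sqrt{\sigma}}a^{3/2}$ (the weighted AM--GM inequality with exponents $3$ and $3/2$), which yields the bound directly; I would mention this as the cleaner route if brevity is preferred.
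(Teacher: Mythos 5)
Your proof is correct. Note that the paper itself does not prove this lemma at all: it is restated from Lemma 3.3 of \cite{Jiang-2017-Unified}, so there is no in-paper argument to compare against. Your route --- Cauchy--Schwarz to reduce to the scalar function $h(t)=\tfrac{\sigma}{3}t^3-at$ on $t\ge 0$, then exact minimization at $t^\ast=\sqrt{a/\sigma}$ (with the endpoint and $a=0$ cases checked) --- is the standard and complete argument, and your closing observation that the bound is just the weighted Young/AM--GM inequality $at\le \tfrac{\sigma}{3}t^3+\tfrac{2}{3\sqrt{\sigma}}a^{3/2}$ gives an equally valid calculus-free alternative. The only implicit hypothesis worth stating explicitly is $\sigma>0$, which the lemma's conclusion already presupposes since $\sqrt{\sigma}$ appears in the bound, and which holds in the paper's application where $\sigma=\varsigma_l/4$.
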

\begin{lemma}\label{Lemma:SAARC-T3-P1}
Letting $\z_l = \argmin_{\z\in\br^d} \psi_l(\z)$, we have $\psi_l(\z) - \psi_l(\z_l) \geq (\varsigma_l/12)\|\z-\z_l\|^3$.
\end{lemma}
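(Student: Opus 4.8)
The plan is to exploit the fact that, despite its recursive definition, each $\psi_l$ is simply an affine function plus a single cubic term anchored at $\bar\x_1$. First I would prove by induction on $l$ that
\begin{equation*}
\psi_l(\z) = \ell_l(\z) + \frac{\varsigma_l}{6}\left\|\z - \bar\x_1\right\|^3
\end{equation*}
for some affine map $\ell_l:\br^d\to\br$. The case $l=1$ is the definition of $\psi_1$; for the inductive step, the increment added to $\psi_{l-1}$ consists of the affine piece $\frac{l(l+1)}{2}\left(f(\bar\x_{l-1}) + (\z-\bar\x_{l-1})^\top\nabla f(\bar\x_{l-1})\right)$ together with the cubic piece $\frac16(\varsigma_l-\varsigma_{l-1})\|\z-\bar\x_1\|^3$, and the latter merges with the inherited $\frac{\varsigma_{l-1}}{6}\|\z-\bar\x_1\|^3$ into $\frac{\varsigma_l}{6}\|\z-\bar\x_1\|^3$.

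Since $\|\cdot\|^3$ is convex and continuously differentiable on $\br^d$ (its gradient $3\|\cdot\|\,(\cdot)$ is continuous and vanishes at the origin), $\psi_l$ is differentiable and convex, so its minimizer $\z_l$ is characterized by $\nabla\psi_l(\z_l)=0$, i.e.
\begin{equation*}
\nabla\ell_l + \frac{\varsigma_l}{2}\left\|\z_l-\bar\x_1\right\|\left(\z_l-\bar\x_1\right) = 0 ,
\end{equation*}
where $\nabla\ell_l$ is the constant gradient of $\ell_l$. Substituting this into $\psi_l(\z)-\psi_l(\z_l)$ and using $\ell_l(\z)-\ell_l(\z_l)=\langle\nabla\ell_l,\z-\z_l\rangle$, I would arrive at
\begin{equation*}
\psi_l(\z)-\psi_l(\z_l) = \frac{\varsigma_l}{6}\left(\left\|\z-\bar\x_1\right\|^3 - \left\|\z_l-\bar\x_1\right\|^3 - 3\left\|\z_l-\bar\x_1\right\|\left\langle\z_l-\bar\x_1,\ \z-\z_l\right\rangle\right) .
\end{equation*}
The lemma then follows once I establish the three-point inequality $\|\mathbf{a}\|^3 \ge \|\mathbf{b}\|^3 + 3\|\mathbf{b}\|\langle\mathbf{b},\mathbf{a}-\mathbf{b}\rangle + \tfrac12\|\mathbf{a}-\mathbf{b}\|^3$ for all $\mathbf{a},\mathbf{b}\in\br^d$, applied with $\mathbf{a}=\z-\bar\x_1$ and $\mathbf{b}=\z_l-\bar\x_1$ (so that $\mathbf{a}-\mathbf{b}=\z-\z_l$), which yields exactly $\frac{\varsigma_l}{12}\|\z-\z_l\|^3$.

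The technical core — and the step I expect to cost the most — is this three-point inequality. I would deduce it from the monotonicity estimate
\begin{equation*}
\left\langle\left\|\mathbf{u}\right\|\mathbf{u} - \left\|\mathbf{v}\right\|\mathbf{v},\ \mathbf{u}-\mathbf{v}\right\rangle \ \ge\ \frac12\left\|\mathbf{u}-\mathbf{v}\right\|^3 \qquad\text{for all }\mathbf{u},\mathbf{v}\in\br^d ,
\end{equation*}
by integrating along the segment $\mathbf{u}_t=\mathbf{b}+t(\mathbf{a}-\mathbf{b})$: one checks the identity $\|\mathbf{a}\|^3-\|\mathbf{b}\|^3-3\|\mathbf{b}\|\langle\mathbf{b},\mathbf{a}-\mathbf{b}\rangle = 3\int_0^1\langle\|\mathbf{u}_t\|\mathbf{u}_t-\|\mathbf{b}\|\mathbf{b},\,\mathbf{a}-\mathbf{b}\rangle\,dt$, and since $\mathbf{a}-\mathbf{b}=t^{-1}(\mathbf{u}_t-\mathbf{b})$ the estimate bounds each integrand below by $\tfrac12 t^{2}\|\mathbf{a}-\mathbf{b}\|^3$, with $3\int_0^1\tfrac12 t^2\,dt=\tfrac12$. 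To prove the monotonicity estimate itself, I would reduce to the three scalars $a=\|\mathbf{u}\|$, $b=\|\mathbf{v}\|$, $c=\langle\mathbf{u},\mathbf{v}\rangle\in[-ab,ab]$: the claim becomes $a^3+b^3-(a+b)c\ge\tfrac12(a^2+b^2-2c)^{3/2}$, whose left side is affine in $c$ and right side convex in $c$, so the difference is concave in $c$ and it suffices to verify the two endpoints $c=\pm ab$ — at $c=ab$ the inequality reduces to $a+b\ge|a-b|$, and at $c=-ab$ to $\tfrac12(a-b)^2\ge0$. The only point requiring a little care throughout is the non-smoothness of $\|\cdot\|^3$ at $\z=\bar\x_1$, which is harmless since its gradient extends continuously (by zero) there, so every differentiation above is legitimate.
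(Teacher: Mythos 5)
Your proposal is correct, and it is essentially the standard argument for this growth bound: the paper itself gives no proof here but restates the lemma from \cite{Jiang-2017-Unified} (ultimately Nesterov's uniform-convexity lemma for $\|\cdot\|^3$ in \cite{Nesterov-2008-Accelerating}), and your decomposition of $\psi_l$ into an affine part plus $\frac{\varsigma_l}{6}\|\z-\bar{\x}_1\|^3$, the first-order optimality condition at $\z_l$, and the three-point inequality with modulus $\frac{1}{2}\|\cdot\|^3$ reproduce exactly that route, with the constant $\frac{\varsigma_l}{6}\cdot\frac{1}{2}=\frac{\varsigma_l}{12}$ coming out right. One trivial slip: at the endpoint $c=ab$ the scalar inequality reduces to $a+b\geq\frac{1}{2}|a-b|$ rather than $a+b\geq|a-b|$, but the latter is true for $a,b\geq 0$ anyway, so the verification stands.
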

The following lemma is useful to bound the total number of successfully updating $\varsigma>0$.
\begin{lemma}\label{Lemma:SAARC-T3-P2}
Suppose in each iteration $j$ of Algorithm \ref{Alg:ASAS}, we have $\|\nabla^2 f(\x_j) - H(\x_j)\| \leq \epsilon_j$ for any $0 \leq j \leq T_2$. Then we have
\begin{equation*}
\|\nabla f(\x_{j+1}){\|} \leq {\left(0.5\bar{\rho}+ 2\kappa_\theta + L +{ \epsilon_0}+ 2\bar{\sigma}^P_2+2\right)}\|\s_j\|^2,
\end{equation*}
where $\kappa_\theta \in (0,1)$ is used in Condition~\ref{Cond:Approx_Subprob_SAARC}.
\end{lemma}
\begin{proof}
Note that $\nabla_{\s} m(\s_j;\x_j,\sigma_j) := \nabla f(\x_j) + H(\x_j)\s_j + \sigma_j\|\s_j\|\cdot\s_j $. Then we have
\begin{eqnarray*}
& & \|\nabla f(\x_{j+1})\| \\
& \leq & \|\nabla f(\x_{j+1}) - \nabla f(\x_j)- \nabla^2 f(\x_j)\s_j| + \|\nabla^2 f(\x_j) - H(\x_j)\|\|\s_j\| + \sigma_j\|\s_j\|^2 + \|\nabla_{\s} m(\s_j;\x_j,\sigma_j)\| \\
& \leq & \left\|\int_0^1(\nabla^2 f(\x_j+\tau \s_j)- \nabla^2 f(\x_j)) \s_j d\tau\right\| + 2 \epsilon_j\|\s_j\| + \sigma_j\|\s_j\|^2 + \kappa_\theta\|\s_j\|^2 \\
& \leq & \frac{\bar{\rho}}{2}\|\s_j\|^2 + 2 \epsilon_j \|\s_j\| + \bar\sigma_2\|\s_j\|^2 + \kappa_\theta \cdot \|\s_j\|^2 ,
\end{eqnarray*}
where the second inequality holds true due to Condition~\ref{Cond:Approx_Subprob_SAARC}, and the last inequality follows from Assumption \ref{Assumption-Objective-Gradient-Hessian} and \eqref{bound-sigma-j}. The subsequent analysis is conducted according to the value of $\|\s_j\|$ in two cases.
\begin{enumerate}
\item When $\|\s_j\|\geq 1$, we have
\begin{equation*}
\|\nabla f(\x_{j+1})\| \leq \left(\frac{\bar{\rho}}{2} + 2 \epsilon_j + \bar{\sigma}^P_2 + \kappa_\theta\right)\|\s_j\|^2,
\end{equation*}
which combined with $\epsilon_j \leq \epsilon_0 \le 1$ implies that
\begin{equation*}
\|\nabla f(\x_{j+1})\| \leq \left(\frac{\bar{\rho}}{2} + \bar{\sigma}^P_2 + \kappa_\theta + 2 \right)\|\s_j\|^2.
\end{equation*}
\item When $\|\s_j\|<1$, {recall in Algorithm \ref{Alg:ASAS}} that
\begin{equation*}
\epsilon_j = \min\left\{\frac{\left\|\nabla f({\y_l}) \right\|}{4}, \ {  \epsilon_{0}}\right\} \leq \frac{\left\|\nabla f({\y_l})  \right\|}{4},
\end{equation*}
which combined with the first inequality at the beginning of the proof implies that
\begin{eqnarray*}
\|\nabla f(\x_{j+1})\| & \leq & \left(\frac{\bar{\rho}}{2} + \kappa_\theta + \bar{\sigma}^P_2\right)\|\s_j\|^2 + \frac{\|\nabla f({\y_l})\|\|\s_j\|}{2} \\
& \leq & \left(\frac{\bar{\rho}}{2} + \kappa_\theta  + \bar{\sigma}^P_2\right)\|\s_j\|^2 +  {\left(L+{ \epsilon_0} + \kappa_\theta  + \sigma_j \right)\|\s_j\|^2}\\
& \leq & \left(\frac{\bar{\rho}}{2}+ 2\kappa_\theta + L +{ \epsilon_0} + 2\bar{\sigma}^P_2\right)\| \s_j\|^2.
\end{eqnarray*}
\end{enumerate}
In summary, we have
\begin{equation*}
\|\nabla f(\x_{j+1})\| \leq \left(\frac{\bar{\rho}}{2}+ 2\kappa_\theta + L+{ \epsilon_0}  + 2\bar{\sigma}^P_2+2\right)\|\s_j\|^2.
\end{equation*}
\end{proof}
\paragraph{Proof of Lemma \ref{Lemma:SAARC-T3}:} We are now ready to provide an upper bound of $T_3$. When $l=0$, it trivially holds true that $\psi_l(\z_l) \geq (1/6)l(l+1)(l+2)f(\bar{\x}_l)$ since $\psi_0(\z)=f(\bar{\x}_0)$. It suffices to establish the general case when $\varsigma_l \geq 8\eta^{-2}(0.5\bar{\rho} + 2\kappa_\theta + L + 2\bar{\sigma}^P_2+1)^3$ by mathematical induction. Without loss of generality, we assume \eqref{Inequality:induction} holds true for some $l - 1 \ge 1$. Then, it follows from Lemma \ref{Lemma:SAARC-T3-P1}, and the construction of $\psi_l(\z)$ that
\begin{equation*}
\psi_{l-1}(\z)\geq\psi_{l-1}(\z_{l-1})+\frac{1}{12}\varsigma_{l-1}\|\z-\z_{l-1}\|^3 \geq \frac{(l-1)l(l+1)}{6} f(\bar{\x}_{l-1}) + \frac{1}{12}\varsigma_{l-1}\|\z-\z_{l-1}\|^3.
\end{equation*}
As a result, we have
{\small \begin{eqnarray*}
& & \psi_l(\z_l) \\ 
& = & \min_{\z\in\br^d} \ \left\{\psi_{l-1}(\z) + \frac{l(l+1)}{2}[ f(\bar{\x}_l)+(\z-\bar{\x}_l)^\top \nabla f(\bar{\x}_l)] + \frac{1}{6}(\varsigma_l - \varsigma_{l-1})\|\z-\bar{\x}_0\|^3  \right\} \\
& \geq & \min_{\z\in\br^d} \ \left\{\frac{(l-1)l(l+1)}{6} f(\bar{\x}_{l-1}) + \frac{\varsigma_{l-1}}{12}\left\|\z-\z_{l-1}\right\|^3 + \frac{l(l+1)}{2}\left[ f(\bar{\x}_l)+\left(\z-\bar{\x}_l\right)^\top \nabla f(\bar{\x}_l) \right]  \right\} \\
& \geq & \min_{\z\in\br^d} \ \left\{\frac{(l-1)l(l+1)}{6} [f(\bar{\x}_l)+(\bar{\x}_{l-1}-\bar{\x}_l)^\top\nabla f(\bar{\x}_l)] + \frac{\varsigma_{l-1}}{12}\|\z- \z_{l-1}\|^3 + \frac{l(l+1)}{2}[f(\bar{\x}_l)+(\z -\bar{\x}_l)^\top\nabla f(\bar{\x}_l)]\right\} \\
& = & \frac{l(l+1)(l+2)}{6} f(\bar{\x}_l) + \min_{\z\in\br^d} \left\{\frac{(l-1)l(l+1)}{6} \left(\bar{\x}_{l-1}-\bar{\x}_l\right)^\top\nabla f(\bar{\x}_l)+ \frac{\varsigma_{l-1}}{12}\|\z-\z_{l-1}\|^3 + \frac{l(l+1)}{2}(\z-\bar{\x}_l)^\top  \nabla f(\bar{\x}_l)\right\},
\end{eqnarray*}}
where the first inequality follows from $\varsigma_l \ge \varsigma_{l-1}$.
By the construction of $\y_{l-1}$, we have
\begin{eqnarray*}
\frac{(l-1)l(l+1)}{6}\bar{\x}_{l-1} & = & \frac{l(l+1)(l+2)}{6}\cdot\frac{l-1}{l+2}\bar{\x}_{l-1} \\
& = & \frac{l(l+1)(l+2)}{6}\left(\y_{l-1} - \frac{3}{l+2}\z_{l-1}\right) \\
& = & \frac{l(l+1)(l+2)}{6}\y_{l-1} - \frac{l(l+1)}{2}\z_{l-1}.
\end{eqnarray*}
Combining the above two formulas yields
\begin{eqnarray*}
\psi_l(\z_l) & \geq & \frac{l(l+1)(l+2)}{6} f(\bar{\x}_l) + \min_{\z\in\br^d} \ \left\{\frac{l(l+1)(l+2)}{6}\left(\y_{l-1}-\bar{\x}_l\right)^\top\nabla f(\bar{\x}_l) \right. \\
& & \left. + \frac{\varsigma_{l-1}}{12}\|\z-\z_{l-1}\|^3 + \frac{l(l+1)}{2}(\z-\z_{l-1})^\top\nabla f(\bar{\x}_l) \right\}.
\end{eqnarray*}
By the criterion of successful iteration in Algorithm \ref{Alg:ASAS} and Lemma \ref{Lemma:SAARC-T3-P2}, we have
\begin{equation*}
(\y_{l-1}-\bar{\x}_l)^\top\nabla f(\bar{\x}_l) = -\s_j^\top\nabla f(\bar \x_l) \geq \eta\| \s_j\|^3 \geq \eta \left(\frac{\|\nabla f(\bar{\x}_l)\|}{0.5\bar{\rho}+ 2\kappa_\theta + L +{ \epsilon_0} + 2\bar{\sigma}^P_2+2 }\right)^{\frac{3}{2}},
\end{equation*}
where the $l$-th successful iteration count refers to the $j$-th iteration count. Hence, it suffices to establish
\begin{equation*}
\frac{l(l+1)(l+2)\eta}{6} \left(\frac{\|\nabla f(\bar{\x}_l)\|}{\frac{\bar{\rho}}{2}+ 2\kappa_\theta + L+{ \epsilon_0} + 2\bar{\sigma}^P_2+2}\right)^{\frac{3}{2}} + \frac{\varsigma_{l-1}}{12}\|\z-\z_{l-1}\|^3 + \frac{l(l+1)}{2}(\z-\z_{l-1})^\top  \nabla f(\bar{\x}_l) \geq 0.
\end{equation*}
Using Lemma \ref{Lemma:General-Second-Order} and setting $\g = 0.5l(l+1)\nabla f(\bar{\x}_l)$, $\s=\z-\z_l$, and $\sigma=\varsigma_{l-1}/4$, the above is implied by
\begin{equation}\label{Cubic-Proof-last-Inequality}
\frac{l(l+1)(l+2)\eta}{6}\left(\frac{1}{0.5\bar{\rho} + 2\kappa_\theta + L +{ \epsilon_0} + 2\bar{\sigma}^P_2+2}\right)^{\frac{3}{2}}\geq\frac{4}{3\sqrt{\varsigma_{l-1}}}\left(\frac{l(l+1)}{2}\right)^{\frac{3}{2}}.
\end{equation}
Therefore, the conclusion follows if
\begin{equation*}
\varsigma_{l-1} \geq {8\eta^{-2}(0.5\bar{\rho} + 2\kappa_\theta + L +{ \epsilon_0} + 2\bar{\sigma}^P_2+2)^3}.
\end{equation*}
This completes the proof.

\section{Proofs in Section \ref{Section:worst-case}}\label{Sec: Worst-Proofs}
We prove Lemma \ref{Lemma:W-AARC-T1} and Lemma \ref{Lemma:W-AARC-T2}, which describe the relation between the total iteration numbers in Algorithm \ref{Alg:SSAS} and the amount of successful iterations $|\SucCal|$ in Algorithm \ref{Alg:ASAS}.
\paragraph{Proof of Lemma \ref{Lemma:W-AARC-T1}: } According to Condition~\ref{Cond:Approx_Subprob_SAARC}, it holds that
\begin{eqnarray*}
\kappa_\theta\|\nabla f(\x_i)\| & \geq & \|\nabla m(\s_i; \x_i, \sigma_i))\| \ = \ \|\nabla f(\x_i) + H(\x_i) \s_i + \sigma_i\|\s_i\|\cdot\s_i\| \nonumber \\
&  {\eqref{Property: matr-H} \above 0pt \geq } & \|\nabla f(\x_i)\| - (L+ \epsilon_0) \|\s_i\| - \sigma_i\|\s_i\|^2,
\end{eqnarray*}
which implies that
\begin{equation*}
\sigma_i\|\s_i\|^2 + (L+\epsilon_0)\|\s_i\|  - (1-\kappa_\theta) \sqrt{\epsilon} \geq 0,
\end{equation*}
and hence
\begin{equation}\label{W-lowerbound-si}
\|\s_i\| \geq \frac{-(L + \epsilon_0)+\sqrt{(L+\epsilon_0)^2+4\sigma_i\sqrt{\epsilon} (1-\kappa_\theta)}}{2\sigma_i}.
\end{equation}
Moreover, we have that
\begin{eqnarray}\label{Equality-Second-Order-AARC}
f(\x_i + \s_i) & = & f(\x_i) + \s_i^\top \nabla f(\x_i) + \int_{0}^{1} \left[\nabla f(\x_i + \tau \s_i) -  \nabla f(\x_i)\right] \s_i \; d \tau \nonumber \\
& {\eqref{Def:Lipschitz-Gradient} \above 0pt  \leq } & f(\x_i) + \s_i^\top \nabla f(\x_i) + \frac{L}{2}\|\s_i\|^2   \nonumber \\
& = & m(\s_i; \x_i, \sigma_i)) + \frac{L}{2}  \| \s_i \|^2 - \frac{1}{2}\s_i^\top H(\x_i) \s_i - \frac{\sigma_i}{3}\|\s_i\|^3 \nonumber \\
& {\eqref{Property: matr-H} \above 0pt  \leq }  & m(\s_i; \x_i, \sigma_i)) + \left(\frac{L}{\|\s_i\|} - \frac{\sigma_i}{3}\right)\|\s_i\|^3.
\end{eqnarray}
Combining {\eqref{W-lowerbound-si}} and \eqref{Equality-Second-Order-AARC}
 yields the following relation
\begin{equation*}
\frac{2\sigma_i L}{-(L + \epsilon_0)+\sqrt{(L+\epsilon_0)^2+4\sigma_i\sqrt{\epsilon} (1-\kappa_\theta)} } - \frac{\sigma_i}{3} \leq 0 \Longrightarrow f(\x_i+\s_i)\leq m(\s_i; \x_i, \sigma_i)).
\end{equation*}
Note that the left hand side inequality is equivalent to
\begin{equation*}
\frac{(L + \epsilon_0)+\sqrt{(L+\epsilon_0)^2+4\sigma_i\sqrt{\epsilon} (1-\kappa_\theta)} }{2\sqrt{\epsilon} (1-\kappa_\theta) } - \frac{\sigma_i}{3 L} \leq 0 ,
\end{equation*}
which is implied by $\sigma_i \geq \frac{3L(4L + \epsilon_0)}{(1-\kappa_\theta)\sqrt{\epsilon}}$.
In summary, we have concluded that
\begin{equation*}\label{sigma-imply2}
\sigma_i \geq \frac{3L(4L + \epsilon_0)}{(1-\kappa_\theta)\sqrt{\epsilon}} \Longrightarrow f(\x_i+\s_i)\leq m(\s_i; \x_i, \sigma_i)).
\end{equation*}
The remaining proof is similar to the argument below \eqref{sigma-imply} in Lemma \ref{Lemma:SAARC-T1}. 

\paragraph{Proof of Lemma \ref{Lemma:W-AARC-T2}:}
We have
\begin{eqnarray*}
\s_{j}^\top\nabla f(\y_l+\s_{j}) & = & \s_{j}^\top\left[\nabla f(\y_l + \s_{j}) - \nabla f(\y_l) \right] + \s_{j}^\top\left[ \nabla f(\y_l) + H(\y_l)\s_{j}\right] {-} \s_{j}^\top H(\y_l)\s_{j} \\
& \leq & \|\nabla f(\y_l+\s_{j}) - \nabla f(\y_l)\|\|\s_j\| + \s_j^\top[\nabla m(\y_l, \s_{j}, \sigma_{j}) - \sigma_j\|\s_j\|\s_j] {+} \|H(\y_l)\| \| \s_j\|^2 \\
& {\eqref{Def:Lipschitz-Gradient} \eqref{Eqn:Approx_Subprob_SAARC} \eqref{Property: matr-H}\above 0pt \leq }  & L\|\s_j\|^2  + \kappa_\theta\|\s_j\|^3 - \sigma_j\| \s_j\|^3 + (L + \epsilon_0)\| \s_j\|^2 \\
& = & \left(\frac{2L + \epsilon_0}{\|\s_j\|} + \kappa_\theta - \sigma_{j} \right)\|\s_{j}\|^3.
\end{eqnarray*}
A similar argument of \eqref{W-lowerbound-si} implies that
\begin{equation*}
\|\s_j\| \geq \frac{-(L + \epsilon_0)+\sqrt{(L+\epsilon_0)^2+4\sigma_j\sqrt{\epsilon} (1-\kappa_\theta)}}{2\sigma_j}.
\end{equation*}
Now combining the two inequalities above yields the following relation.
\begin{equation*}
\frac{L+\epsilon_0 +\sqrt{(L+\epsilon_0)^2+4\sqrt{\epsilon}\sigma_{j}(1-\kappa_\theta)}}{2\sqrt{\epsilon}(1-\kappa_\theta)} - \frac{\sigma_{j} - \kappa_\theta - \eta}{2L + \epsilon_0} \leq 0 \Longrightarrow -\frac{\s_{j}^\top\nabla f(\y_l+\s_{j})}{\|\s_{j}\|^3}\geq\eta.
\end{equation*}
A straight forward calculation shows that the inequality on the left hand side is implied by
\begin{equation*}\label{upperbound-epsiloni}
\sigma_{j} \geq \frac{(3L+2\epsilon_0)(2L+\epsilon_0)+2\sqrt{\epsilon}(1-\kappa_\theta)(\kappa_\theta+\eta)+(2L+\epsilon_0)\sqrt{(3L+2\epsilon_0)^2+\sqrt{\epsilon}(1-\kappa_\theta)(\kappa_\theta+\eta)}}{2 \sqrt{\epsilon}(1-\kappa_\theta)} .
\end{equation*}
The remaining proof is similar to the argument in Lemma \ref{Lemma:SAARC-T2}. 
\begin{lemma}\label{Lemma:W-AARC-T3-P2}  
Suppose in each iteration $j$ of Algorithm \ref{Alg:ASAS}, we have $\|\nabla f(\x_j)\|^2 > \epsilon$ for any $0 \leq j \leq T_2$. Then we have
\begin{equation*}
\|\nabla f(\x_{j+1})\| \leq \left((2L+\epsilon_0) \cdot \frac{(L+\epsilon_0) +\sqrt{(L+\epsilon_0)^2+4{\bar \sigma}^W_{2}\sqrt{\epsilon}(1-\kappa_\theta)}}{2\sqrt{\epsilon}(1-\kappa_\theta)} + \bar{\sigma}^W_2 + \kappa_\theta \right) \|\s_j\|^2
\end{equation*}
where $\kappa_\theta\in(0,1)$ is used in Condition~\ref{Cond:Approx_Subprob_SAARC}.
\end{lemma}
\begin{proof}
Recalling $\nabla_{\s} m(\s_j; \x_j, \sigma_j)=\nabla f(\x_j) + H(\x_j)\s_j + \sigma_j\|\s_j\|\cdot\s_j$, we have
\begin{eqnarray*}
\|\nabla f(\x_{j+1})\| & \leq & \left\| \nabla f(\x_j+\s_j) - \nabla_{\s} m(\s_j; \x_j, \sigma_j)\right\| + \left\|\nabla_{\s} m(\s_j; \x_j, \sigma_{j})\right\| \\
&  {\eqref{Eqn:Approx_Subprob_SAARC} \above 0pt \leq } & \|\nabla f(\x_j+\s_j) - \nabla_{\s} m(\s_j; \x_j, \sigma_j)\| + \kappa_{\theta}\|\s_j\|^2 \\
& \leq & \|\nabla f(\x_j+\s_j) - \nabla f({\x_j})\| + \| H(\x_j)\|\|\s_j\| + \sigma_j\| \s_j\|^2 + \kappa_{\theta}\| \s_j \|^2 \\
& {\eqref{Def:Lipschitz-Gradient} \eqref{Property: matr-H}\above 0pt \leq } & L\|\s_j\| + (L + \epsilon_0)\|\s_j\| + \sigma_j\|\s_j\|^2 + \kappa_{\theta}\| \s_j \|^2\\
& = & \left(\frac{2L + \epsilon_0}{\|\s_j\|}+ \bar \sigma^W_2+ \kappa_{\theta}\right)\| \s_j\|^2.
\end{eqnarray*}
A similar argument of \eqref{W-lowerbound-si} implies that
\begin{equation*}
\|\s_j\| \geq \frac{-(L+\epsilon_0)+\sqrt{(L+\epsilon_0)^2+4 {\sigma}_{j}\sqrt{\epsilon}(1-\kappa_\theta)}}{2 {\sigma}_j}.
\end{equation*}
Therefore, we conclude that
\begin{equation*}
\|\nabla f(\x_{j+1})\| \leq \left((2L+\epsilon_0) \cdot \frac{(L+\epsilon_0) +\sqrt{(L+\epsilon_0)^2+4{\bar \sigma}^W_{2}\sqrt{\epsilon}(1-\kappa_\theta)}}{2\sqrt{\epsilon}(1-\kappa_\theta)}  + \bar{\sigma}^W_2 + \kappa_\theta \right)\|\s_j\|^2.
\end{equation*}
\end{proof}
We are now ready to provide an upper bound of $T_3$.
\paragraph{Proof of Lemma \ref{Lemma:W-AARC-T3}:} The proof is almost the same as that of Lemma \ref{Lemma:SAARC-T3} by mathematical induction. The only difference is the estimation of
\begin{eqnarray*}
& & (\y_{l-1}-\bar{\x}_l)^\top\nabla f(\bar{\x}_l)  \\
& \geq & \eta \left(   \frac{2\sqrt{\epsilon}(1-\kappa_\theta) }{\left(2L+\epsilon_0\right)\left( (L+\epsilon_0) +\sqrt{(L+\epsilon_0)^2+4{\bar \sigma}^W_{2}\sqrt{\epsilon}(1-\kappa_\theta)} \right)+2\sqrt{\epsilon}(1-\kappa_\theta)\left( \bar{\sigma}^W_2 + \kappa_\theta  \right)}  \right)^{\frac{3}{2}}\left\| \nabla f(\bar{\x}_l)\right\|^{\frac{3}{2}},
\end{eqnarray*}
{which is due to Lemma \ref{Lemma:W-AARC-T3-P2}.}
By adapting the proof of Lemma \ref{Lemma:SAARC-T3} with such estimation, we achieve the desired result.

\end{document}